\newtheorem{teo}{Theorem}[section]
\newtheorem{cor}[teo]{Corollary}
\newtheorem{lema}[teo]{Lemma}
\newtheorem{prop}[teo]{Proposition}
\theoremstyle{definition}
\theoremstyle{remark}
\newtheorem{rem}[teo]{Remark}
\newtheorem{ex}[teo]{Example}
\numberwithin{equation}{section}
\newcommand{\N}{\mathbb{N}}
\newcommand{\R}{\mathbb{R}}
\newcommand{\C}{\mathbb{C}}
\newcommand{\D}{\mathbf{D}}
\newcommand{\B}{\mathcal{B}}
\renewcommand{\phi}{\varphi}
\begin{document}

\def\N{\mathbb{N}}
\def\Z{\mathbb{Z}}
\def\K{\mathbb{K}}
\def\R{\mathbb{R}}
\def\D{\mathbb{D}}
\def\C{\mathbb{C}}
\def\T{\mathbb{T}}

\def\dint{\int}
\def\intc{\int_0^1}
\def\intD{\int_{\D}}
\def\intpi{\int_0^{2\pi}}
\def\sumi{\sum_{n=0}^\infty}
\def\dfrac{\frac}
\def\dsum{\sum}
\def\cuadro{\hfill $\Box$}
\def\qed{\hfill $\Box$}
\def\prueba{\vskip10pt\noindent{\it PROOF.}\hskip10pt}
\newcommand{\cuadrosymb}{\mbox{ }~\hfill~{\rule{2mm}{2mm}}}
\renewcommand{\phi}{\varphi}

\def\L{{\cal L}}
\def\P{{\cal P}}
\def\B{\mathcal{B}(B_E)}
\def\H{\cal H}
\def\KO{{\cal K}}

\def\HX{{\cal H}(X)}
\def\HY{{\cal H}(Y)}
\def\HZ{{\cal H}(Z)}

\def\ba{\begin{eqnarray*}}
\def\ea{\end{eqnarray*}}

\def\be{\begin{equation}}
\def\ee{\end{equation}}

\def\r{\right}
\def\d{\displaystyle}
\def\lh{{\langle}}
\def\rh{{\rangle}}
\def\Rad{\mathop{\text{\rm{Rad}}}\nolimits}
\def\id{\mathop{\hbox{\rm id}}\nolimits}
\def\rad{\mathop{\hbox{\rm rad}}\nolimits}
\def\wcj{{\bar w}}
\def\zcj{{\bar z}}
\def\nuc{\big|K_z(\wcj)\big|}
\def\tp{\hat\otimes}
\def\I{{1\over 2\pi}\int_{-\pi}^\pi}
\def\II{\int_{0}^1}

\def\e{\varepsilon}
\def\o{\over}

\def\s{\sum_{n=0}^\infty}
\def\sf{\sum_{n\ge 0}}
\def\nl{\nolimits}
\setlength{\textheight}{210mm}

\title[Composition Operators on the Bloch space of a Hilbert Space]{Composition Operators on the Bloch space of the Unit Ball of a Hilbert Space}
\author[O. Blasco]{Oscar Blasco$^*$} \address{Oscar Blasco. Departamento de An\'{a}lisis Matem\'{a}tico, Universidad de Valencia, Valencia, Spain. \emph{e}.mail:
oscar.blasco@uv.es}

\author[P. Galindo]{Pablo Galindo$^\dag$}\address{Pablo Galindo. Departamento de An\'{a}lisis Matem\'{a}tico, Universidad de Valencia, Valencia, Spain. \emph{e}.mail:
pablo.galindo@uv.es}

\author[M. Lindstr\"{o}m]{Mikael Lindstr\"om$^\dag$}\address{Mikael Lindstr\"om. Department of Mathematics, Abo Akademi University, Abo, Finland. \emph{e}.mail: mlindstr@abo.fi}

\author[A. Miralles]{Alejandro Miralles$^\ddagger$}\address{Alejandro Miralles. Departament de Matem\`{a}tiques and
Instituto Universitario de Matem\'{a}ticas y Aplicaciones de Castell\'{o}n (IMAC), Universitat Jaume I de Castell\'{o} (UJI), Castell\'{o}, Spain. \emph{e}.mail:
mirallea@uji.es}

\thanks{$^*$Partially supported by  MTM2011-23164.}\thanks{$^\dag$Partially supported by MEC2011-22457. }\thanks{$^\ddagger$Partially supported by MEC2011-22457 and P1-1B2014-35.}
\subjclass[2010]{Primary 30D45, 46E50. Secondary 46G20.}
\keywords{composition operator, Bloch function in the ball, infinite dimensional holomorphy.}
\maketitle
\begin{abstract} Every analytic self-map of the unit ball of a Hilbert space induces a bounded composition operator on the space of Bloch functions. Necessary and sufficient conditions for compactness of such composition operators are provided, as well as some examples that clarify the connections among such conditions.
\end{abstract}

\section{Introduction }
Let $E$ be a complex Hilbert space of arbitrary dimension and denote $B_E$ its open unit ball. The space $\mathcal{B}(B_E)$ of Bloch functions was introduced in \cite{BGM}. There it was shown that it can be endowed with a (modulo the constant functions) norm  that is invariant under the automorphisms of $B_E;$ see  section 3 below for the basics. This article studies composition operators acting on $\mathcal{B}(B_E),$ i.e., self-maps of $\mathcal{B}(B_E)$ defined according to $C_\varphi(f)=f\circ\varphi,$  for a given analytic map $\varphi:B_E \to B_E.$ As in the finite dimensional case, every composition operator is bounded, actually of norm not greater than $1$ for the invariant norm if the symbol vanishes at $0,$ and also the hyperbolic metric on $B_E$ measures the distance between evaluations in the dual space. We also study the compactness of composition operators, providing necessary and sufficient conditions. There are two common requirements for both the necessity and the sufficiency:
 $$\lim_{\|\phi(z)\|\to 1}\frac{(1- \|z\|^2) \|\mathcal R \phi(z)\|}{\sqrt{1- \|\phi(z)\|^2}}=0\quad \hbox{and}  $$
$$\lim_{\|\phi(z)\|\to 1}\frac{(1- \|z\|^2) |\langle  \phi(z), {\mathcal R\phi(z)}\rangle|}{1- \|\phi(z)\|^2}=0, $$
\noindent where $R\phi(z)$ denotes the radial derivative at $z.$  The fact that for all $0<\delta<1,$ $\varphi(\delta B_E)$ and $ \{(1-\|z\|^2)\mathcal R\phi(z):z\in B_E\}$ are relatively compact completes a necessary condition, while the additional assumption $\varphi(B_E)\cap \delta B_E$ and $ \{(1-\|z\|^2)\mathcal R\phi(z):\|\phi(z)\|<\delta \}$ being relatively compact, provides a sufficient one. Such compactness requirements are trivially satisfied in the finite dimensional case, thus the two limits above   yield  an apparently new characterization.

Some of our techniques are inspired by J. Dai's paper \cite{D}. However, there are some obstacles to avoid when allowing an infinite number of variables, like the lack of relative compactness of the ball, the number of components of the symbol or the use of the invariant Laplacian. And still a major one: uniform convergence on compact sets does not imply uniform convergence on compact sets of the derivatives; this only happens in the finite dimensional setting (see \cite{C}). Such obstacle causes the lengthy proof of our main result Theorem \ref{main}. In the final section we present several examples that discuss the relations among the conditions we have found.
\section{Background}
Let $(e_k)_{k\in \Gamma}$ be an orthonormal basis of $E$ that we fix at once.  Then every $z\in E$ can be
 written as $z= \sum_{k\in \Gamma} z_ke_k$ and we write $\overline{z}= \sum_{k\in \Gamma} \overline{z_k}e_k.$
 \smallskip

Given an analytic function $\phi:B_E\to B_E$
we write $\phi(x)=\sum_{k \in \Gamma} \phi_k(x)e_k$,
$\phi'(x):E\to E$ its derivative at $x,$
and
$\mathcal R \phi(x)=\phi'(x)(x)$ its radial derivative at $x.$

We shall denote by $\varphi_a$ the M\"obius transforms for Hilbert spaces. For each $a\in B_E,\;\varphi_a: B_E\to B_E$ is defined by
$$ \varphi_a(x)=(s_aQ_a+P_a)(m_a(x))$$
where $s_a=\sqrt{1-\|a\|^2}$, $m_a:B_E\to B_E$ is the analytic function
$$m_a(x)=\frac{a-x}{1-\langle x,a\rangle}$$
and $P_a= \frac{1}{\|a\|^2}a\otimes a$ where $u\otimes v(x)=\langle x,u\rangle v$ and $Q_a=Id- P_a$ are the orthogonal projection on the one dimensional subspace generated by $a$ and on its orthogonal complement respectively.
Since  $\varphi_a \circ \varphi_a(x)=x$ one has $(\varphi_a)^{-1}= \varphi_a$ and $\varphi_a'(a)=(\varphi'_a(0))^{-1}.$

Actually (see for instance  \cite[Lemma 3.2]{BGM})
\begin{equation}\label{eq7}
\varphi'_a(0)=-s_a^2 P_a  -s_a Q_a,
\end{equation}

\noindent and

\begin{equation}\label{eq8}
 \varphi'_a(a)=-\frac{1}{s_a^2}P_a  -\frac{1}{s_a}Q_a.
\end{equation}
The pseudo-hyperbolic and hyperbolic metrics on $B_E$ are  respectively defined by $$\rho_E(x,y):=\|\varphi_{x}(y)\| ~~\text{    and    }~~
 \beta_E(x,y):=\frac{1}{2}\log\frac{1+\rho_E(x,y)}{1-\rho_E(x,y)}.$$ It is known (\cite{GR} p. 99)
that
\begin{equation} \label{eq2}
 \|\varphi_{x}(y)\|^2=1-\frac{(1 -\|x\|^2)(1 -\|y\|^2)}{| 1 - \langle x,y\rangle |^2}.
\end{equation}
Also
\begin{equation}\label{eq3}
\rho_E (x,y)\ =\ \sup\{\rho(f(x),f(y)): f\in H^\infty(B_E), \|f\|_\infty \le 1\},
\end{equation}
where $\rho$ is the pseudo-hyperbolic metric  on the open unit disk $\mathbb D$ in the complex plane given by
$\rho(z,w)= \left| \frac{z-w}{1-\bar{z}w}\right|$ and  $H^\infty(B_E)$ denotes the Banach space of  bounded analytic functions on $B_E$ endowed with the sup-norm.

Since $(s+t)/(1+st)$ is an increasing function of $s$ and $t$ for $0\leq s,t\leq 1$, the sharpened form of the
triangle inequality for $\rho(z,w)$ easily yields the same inequality for
$\rho_E(x,y)$,
\begin{equation}\label{eq4}
\rho_E(x,y)\ \leq \frac{\rho_E(x,u)+\rho_E(u,y)}{1+\rho_E(x,u)\rho_E(u,y)},
\qquad x,u,y\in B_E.
\end{equation}
The following estimate holds (see \cite{BGM}, Lemma 4.1):
\begin{equation} \label{eq5}
\rho_E(x,y) \le\frac{\|x-y\|}{|1- \langle x, y\rangle|}, \qquad x,y \in B_E.
\end{equation}
The open unit ball of $H^\infty(B_E)$ is invariant under post-composition with
conformal self-maps of $\mathbb D$.  By composing $f$ with a conformal
self-map of $\mathbb D$ that maps $f(y)$ to $0$, one obtains that
\be\label{eq6}
\rho_E (x,y)\ =\ \sup\{|f(x)|: f\in H^\infty(B_E), \|f\|_\infty \le 1, f(y)=0\}.
\end{equation}

Recall that if $f:B_E\to \C$ is analytic we have $f'(x)(y)=\langle y,\overline{\nabla f(x)}\rangle $ and $
(f\circ \varphi_x)'(0)(y)=\langle y,\overline{\widetilde \nabla f(x)}\rangle,$
\noindent where $\widetilde \nabla f(x)$ denotes the invariant gradient of $f$ at $x \in B_E$ given by $$\widetilde \nabla f(x)=\nabla (f \circ \phi_x)(0).$$

The following result gives an explicit formula to compute the invariant gradient. It is a  modification of Lemma 3.5 in \cite{BGM} in a form that fits our purposes.
\begin{lema} \label{lema2}  Let $f:B_E \to\mathbb C$ be an analytic function and $x\in B_E$. Then
\begin{equation}  \label{fund}
\|\widetilde \nabla f(x)\|  = \sup_{w\ne 0} \frac{|\langle \nabla f(x), {\overline w }\rangle| (1 -\|x\|^2)}
{\sqrt{(1 - \|x\|^2) \|w\|^2 + |\langle w,x \rangle|^2}}.
\end{equation}
\end{lema}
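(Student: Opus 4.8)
The plan is to unfold the definition $\widetilde\nabla f(x)=\nabla(f\circ\varphi_x)(0)$ by means of the chain rule and then perform the linear change of variables dictated by the explicit expressions \eqref{eq7} and \eqref{eq8} for $\varphi_x'(0)$ and its inverse $\varphi_x'(x)$. The case $x=0$ is immediate: there $\varphi_0'(0)$ is a unitary (the derivative at a fixed point of an automorphism of $B_E$), so $\|\widetilde\nabla f(0)\|=\|f'(0)\circ\varphi_0'(0)\|=\|f'(0)\|=\|\nabla f(0)\|$, while the right-hand side of \eqref{fund} reduces to $\sup_{w\ne0}|\langle\nabla f(0),\bar w\rangle|/\|w\|=\|\nabla f(0)\|$ because $w\mapsto\bar w$ is an isometric bijection of $E$. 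So assume $x\ne0$ from now on.

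For any analytic $g:B_E\to\C$ one has $g'(0)(y)=\langle y,\overline{\nabla g(0)}\rangle$, whence $\|\nabla g(0)\|=\sup_{y\ne0}|g'(0)(y)|/\|y\|$. Applying this with $g=f\circ\varphi_x$, using the chain rule and $\varphi_x(0)=x$, gives
$$\|\widetilde\nabla f(x)\|=\sup_{y\ne0}\frac{\bigl|f'(x)\bigl(\varphi_x'(0)(y)\bigr)\bigr|}{\|y\|}.$$
Expanding in the fixed orthonormal basis $(e_k)$ one checks that $f'(x)(u)=\langle u,\overline{\nabla f(x)}\rangle=\langle\nabla f(x),\bar u\rangle$ for every $u\in E$, so in particular $|f'(x)(u)|=|\langle\nabla f(x),\bar u\rangle|$.

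Now change variables. Set $s_x=\sqrt{1-\|x\|^2}$. By \eqref{eq7}, $\varphi_x'(0)=-s_x^2P_x-s_xQ_x$ acts as the nonzero scalar $-s_x^2$ on the line $\C x$ and as $-s_x$ on $x^\perp$, hence is a linear bijection of $E$ with inverse $\varphi_x'(x)=-s_x^{-2}P_x-s_x^{-1}Q_x$ given by \eqref{eq8}. Therefore $y\mapsto w:=\varphi_x'(0)(y)$ is a bijection of $E\setminus\{0\}$ and the last supremum may be taken over $w\ne0$ with $y=\varphi_x'(x)(w)$. Since $P_x$ and $Q_x$ are orthogonal projections onto orthogonal subspaces, $\|P_x(w)\|^2=|\langle w,x\rangle|^2/\|x\|^2$ and $\|Q_x(w)\|^2=\|w\|^2-|\langle w,x\rangle|^2/\|x\|^2$, so using $1-s_x^2=\|x\|^2$ to cancel the factor $\|x\|^2$,
$$\|y\|^2=\frac{\|P_x(w)\|^2}{s_x^4}+\frac{\|Q_x(w)\|^2}{s_x^2}=\frac{1}{s_x^4}\bigl(s_x^2\|w\|^2+|\langle w,x\rangle|^2\bigr).$$
Substituting $|f'(x)(w)|=|\langle\nabla f(x),\bar w\rangle|$ and this value of $\|y\|$ into the supremum yields exactly \eqref{fund}. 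The argument is in essence a computation; the only points that deserve a little care are the conjugation bookkeeping behind $|f'(x)(u)|=|\langle\nabla f(x),\bar u\rangle|$ and the observation that $\varphi_x'(0)$ is onto, which is what legitimizes replacing $\sup_y$ by $\sup_w$. I do not foresee any serious obstacle.
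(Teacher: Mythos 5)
Your proof is correct and follows essentially the same route as the paper's: both reduce \eqref{fund} to the change of variables $w=\varphi_x'(0)(y)$ and the norm identity $\|\varphi_x'(0)^{-1}(w)\|=\sqrt{(1-\|x\|^2)\|w\|^2+|\langle w,x\rangle|^2}\,/(1-\|x\|^2)$. The only difference is that you derive this identity explicitly from \eqref{eq8} via the orthogonal decomposition $w=P_x(w)+Q_x(w)$, whereas the paper imports it from the proof of Lemma 3.5 in \cite{BGM}, so your version is slightly more self-contained.
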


\begin{proof} For the linear functional $w\in E \mapsto \langle \varphi'_x(0)(w), \overline{\nabla f(x)} \rangle$  we have
$$\|\widetilde \nabla f(x)\| =\sup_{w\ne 0} \frac{|\langle \varphi'_x(0)(w), \overline{\nabla f(x)} \rangle|}{\|w\|}=  \sup_{w\ne 0}  \frac{|\langle \nabla f(x), \overline{\varphi'_x(0)(w)}\rangle|}{\|w\|}.$$
Now we can replace  $w$ by $\varphi_x'(0)^{-1}(w)$ in the above formula, so
$$\|\widetilde \nabla f(x)\| = \sup_{w\ne 0}  \frac{|\langle \nabla f(x), \overline{w}\rangle|}{\|\varphi'_x(0)^{-1}(w)\|}.$$
In the proof of Lemma 3.5 in \cite{BGM} it is shown that
$$\|\varphi'_x(0)^{-1}(w)\| = \frac{\sqrt{(1 - \|x\|^2) \|w\|^2 + |\langle w,x \rangle|^2}}{1 -\|x\|^2},$$
so the statement follows.
\end{proof}

Throughout the paper   $\phi:B_E \to B_E$ denotes  an analytic map and given $y\in E\setminus \{0\}$ and $w\in E$ with  $\|w\|\le 1$ we write
\begin{equation} \label{notation}
\phi_{y,w}(\lambda)=
\langle\phi(\lambda\frac{y}{\|y\|}),\overline{w}\rangle, \quad |\lambda|<1.
\end{equation}
The following version of Schwarz-Pick lemma will be needed in the sequel. The analogue of these results in several variables has been proved in \cite{BLT}.
\begin{lema} \label{lema1}  Let $\phi:B_E \to B_E$ be an analytic map  and $y \in B_E$. Then

\begin{equation}\label{sch2}|\langle \mathcal{R}\phi(y), \phi(y) \rangle|\le
\|y\| \  \|\phi(y)\| \ \frac{1-\|\phi(y)\|^2}{1-\|y\|^2},
\end{equation}

\be \label{sl2}
\frac{(1-\|y\|^2)}{\|y\|}\|\mathcal R\phi(y)\|+ \|\phi(y)\|^2|\langle  \frac{{\mathcal R \phi(y)}}{\|\mathcal R \phi(y)\|},\frac{\phi(y)}{\|\phi(y)\|}\rangle|^2\le 1.
\ee
\begin{equation}\label{sch3} \|\mathcal{R}\phi(y)\|\le 2 \ \frac{(1-\|\phi(y)\|^2)^{1/2}}{1-\|y\|^2}.
\end{equation}
\begin{equation}\text{ Furthermore if  }~~~~~\phi(0)=0, \text {then }~~~~~ \label{sch1}\|\phi(y)\|\le \|y\|.~~~~~~~~~~~~~~~~~~~~~~~~~~~~
\end{equation}
\end{lema}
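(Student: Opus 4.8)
The strategy is to reduce everything to the classical one–variable Schwarz–Pick lemma applied to the slice functions $\phi_{y,w}$ of \eqref{notation}. Fix $y\in B_E\setminus\{0\}$; write $r=\|y\|$, $u=y/r$, and for a unit vector $w$ consider $g(\lambda)=\phi_{y,w}(\lambda)=\langle\phi(\lambda u),\overline w\rangle$ for $|\lambda|<1$. Since $\|\phi(\lambda u)\|<1$ we have $|g(\lambda)|<1$, so $g$ is a self-map of $\D$ and the invariant-derivative form of Schwarz–Pick gives $(1-|\lambda|^2)|g'(\lambda)|\le 1-|g(\lambda)|^2$ for all $\lambda$. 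Evaluating at $\lambda=r$ and using the chain rule $g'(r)=\langle\phi'(y)(u),\overline w\rangle=\tfrac1r\langle\mathcal R\phi(y),\overline w\rangle$ yields
\[
(1-r^2)\,\bigl|\langle \mathcal R\phi(y),\overline w\rangle\bigr|\;\le\; r\bigl(1-|\langle\phi(y),\overline w\rangle|^2\bigr),
\qquad \|w\|\le 1 .
\]
This single inequality is the workhorse.

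For \eqref{sch2}: choose $w=\overline{\phi(y)}/\|\phi(y)\|$ (when $\phi(y)\ne 0$; otherwise the inequality is trivial), so that $\langle\phi(y),\overline w\rangle=\|\phi(y)\|$ and $\langle\mathcal R\phi(y),\overline w\rangle=\langle\mathcal R\phi(y),\phi(y)\rangle/\|\phi(y)\|$; substituting gives exactly $|\langle\mathcal R\phi(y),\phi(y)\rangle|\le r\|\phi(y)\|\,(1-\|\phi(y)\|^2)/(1-r^2)$. For \eqref{sch3}: take the supremum over $\|w\|=1$ on the left, which produces $\|\mathcal R\phi(y)\|$, and bound $1-|\langle\phi(y),\overline w\rangle|^2\le 1$ crudely; this gives $\|\mathcal R\phi(y)\|\le r/(1-r^2)\le 1/(1-r^2)$, which is even stronger than the stated bound with the factor $2$ and $\sqrt{1-\|\phi(y)\|^2}$ — so one instead keeps the sharper dependence by writing $1-|\langle\phi(y),\overline w\rangle|^2\le 2(1-\|\phi(y)\|^2)^{1/2}$ whenever the left side could be large, or simply argues that since the bound must hold for the worst $w$ one can afford the factor $2(1-\|\phi(y)\|^2)^{1/2}\ge 1-|\langle\phi(y),\overline w\rangle|^2$ (valid because $1-t^2\le 2\sqrt{1-t^2}$ for... actually $1-t^2=\sqrt{1-t^2}\sqrt{1-t^2}\cdot\frac{1+t^2}{...}$) — the clean route is $1-|c|^2\le 2(1-|c|)\le 2\sqrt{1-|c|^2}$ applied with $|c|=|\langle\phi(y),\overline w\rangle|\le\|\phi(y)\|$, giving $\|\mathcal R\phi(y)\|\le 2r(1-\|\phi(y)\|^2)^{1/2}/(1-r^2)\le 2(1-\|\phi(y)\|^2)^{1/2}/(1-r^2)$.

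Inequality \eqref{sl2} is the subtle one: it packages the full strength of Schwarz–Pick in a direction–sensitive way and is where the real work lies. Here I would not just bound $1-|g(\lambda)|^2$ by $1$; instead, apply Schwarz–Pick to the disk automorphism–composed map $\varphi_{g(r)}\circ g$, or equivalently use the sharp inequality $(1-r^2)|g'(r)|\le 1-|g(r)|^2$ together with a second application controlling how $|g(r)|$ itself relates to $\|\phi(y)\|$ — namely pick $w$ in the direction of $\overline{\mathcal R\phi(y)}$, giving $(1-r^2)\|\mathcal R\phi(y)\|\le r(1-|\langle\phi(y),\overline w\rangle|^2)=r\bigl(1-\|\phi(y)\|^2|\langle \tfrac{\mathcal R\phi(y)}{\|\mathcal R\phi(y)\|},\tfrac{\phi(y)}{\|\phi(y)\|}\rangle|^2\cdot(\text{angle correction})\bigr)$; more precisely, with $w=\overline{\mathcal R\phi(y)}/\|\mathcal R\phi(y)\|$ one has $\langle\phi(y),\overline w\rangle=\langle\phi(y),\mathcal R\phi(y)\rangle/\|\mathcal R\phi(y)\|$, whose modulus equals $\|\phi(y)\|\cdot|\langle\tfrac{\phi(y)}{\|\phi(y)\|},\tfrac{\mathcal R\phi(y)}{\|\mathcal R\phi(y)\|}\rangle|$, so Schwarz–Pick reads
\[
\frac{1-r^2}{r}\,\|\mathcal R\phi(y)\|\;\le\;1-\|\phi(y)\|^2\Bigl|\Bigl\langle \tfrac{\mathcal R\phi(y)}{\|\mathcal R\phi(y)\|},\tfrac{\phi(y)}{\|\phi(y)\|}\Bigr\rangle\Bigr|^2,
\]
which is exactly \eqref{sl2} after transposing the angle term (and again is trivial if $\mathcal R\phi(y)=0$ or $\phi(y)=0$). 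Finally \eqref{sch1}: when $\phi(0)=0$ each slice $g$ fixes $0$, so ordinary Schwarz gives $|g(\lambda)|\le|\lambda|$; taking the sup over unit $w$ at $\lambda=r$ gives $\|\phi(y)\|\le r=\|y\|$. The only real obstacle is bookkeeping the direction–dependent constants in \eqref{sl2} and \eqref{sch3} and handling the degenerate cases $\phi(y)=0$ or $\mathcal R\phi(y)=0$ separately; the analytic content is entirely the one–variable Schwarz–Pick lemma applied slicewise, exactly as in \cite{BLT}.
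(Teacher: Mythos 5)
Your treatment of \eqref{sch2}, \eqref{sl2} and \eqref{sch1} is correct and is exactly the paper's argument: the slice inequality $|\langle\mathcal R\phi(y),\overline w\rangle|\le \|y\|\,\frac{1-|\langle\phi(y),\overline w\rangle|^2}{1-\|y\|^2}$ followed by the choices $w=\overline{\phi(y)}/\|\phi(y)\|$ and $w=\overline{\mathcal R\phi(y)}/\|\mathcal R\phi(y)\|$, and the ordinary Schwarz lemma for the case $\phi(0)=0$.

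However, your proof of \eqref{sch3} has a genuine gap. To extract $\|\mathcal R\phi(y)\|$ you must take the supremum over all unit $w$ on the left, and for a $w$ nearly orthogonal to $\phi(y)$ the quantity $|c|=|\langle\phi(y),\overline w\rangle|$ is close to $0$, so $1-|c|^2$ is close to $1$ regardless of how close $\|\phi(y)\|$ is to $1$. Your chain $1-|c|^2\le 2(1-|c|)\le 2\sqrt{1-|c|^2}$ is true but useless here, because the step you actually need, $\sqrt{1-|c|^2}\le\sqrt{1-\|\phi(y)\|^2}$, goes the wrong way: $|c|\le\|\phi(y)\|$ gives $\sqrt{1-|c|^2}\ge\sqrt{1-\|\phi(y)\|^2}$. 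So the naive supremum only yields $\|\mathcal R\phi(y)\|\le \|y\|/(1-\|y\|^2)$, which does not contain the factor $\sqrt{1-\|\phi(y)\|^2}$ and is \emph{not} stronger than the stated bound (that factor tends to $0$ as $\|\phi(y)\|\to1$, and it is precisely what drives the compactness criteria later in the paper). The missing idea is to keep the $w$-dependence on both sides simultaneously: from $1-|c|^2\le 2(1-|c|)$ one gets $\frac{1-\|y\|^2}{2\|y\|}|\langle\mathcal R\phi(y),\overline w\rangle|+|\langle\phi(y),\overline w\rangle|\le 1$ for every unit $w$, hence $\bigl\|\frac{1-\|y\|^2}{2\|y\|}\mathcal R\phi(y)+e^{i\theta}\phi(y)\bigr\|\le 1$ for every $\theta$; squaring and integrating in $\theta$ kills the cross term and gives $\frac{(1-\|y\|^2)^2}{4\|y\|^2}\|\mathcal R\phi(y)\|^2+\|\phi(y)\|^2\le 1$, from which \eqref{sch3} follows. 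Without this (or an equivalent) device your argument does not establish \eqref{sch3}.
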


\begin{proof} Let us fix $y\in B_E\setminus \{0\}$, $\phi(y)\ne 0 $ and
$w\in  E$ with $\|w\|\le 1$. We apply the  classical Schwarz lemma to $\phi_{y,w}$ and get for any $|\lambda|<1$ that
$$|\phi_{y,w}'(\lambda)|\le \frac{1-|\phi_{y,w}(\lambda)|^2}{1-|\lambda|^2}.$$
Now
if $\lambda\ne 0$ we have
$\phi'_{y,w}(\lambda)
=\frac{1}{\lambda}\langle \mathcal R
\phi(\lambda \frac{y}{\|y\|}),\overline w\rangle.$
Hence, for $\lambda=\|y\|$, it follows that
$$ |\langle \mathcal{R}\phi(y), \overline{w}\rangle| \le \|y\|\frac{1-|\langle\phi(y),\overline{w}\rangle|^2}{1-\|y\|^2}.$$
This shows  (\ref{sch2}) and (\ref{sl2}) by choosing $w= \frac{\overline{\phi(y)}}{\|\phi(y)\|}$ and $w=\frac{\overline{\mathcal R\phi(z)}}{\|\mathcal R\phi(z)\|}$ respectively.
$$\hbox{ To get (\ref{sch3}) we use the estimate } ~~|\langle \mathcal{R}\phi(y), \overline{w} \rangle|\le
2  \|y\| \ \frac{1-|\langle\phi(y),\overline{w}\rangle|}{1-\|y\|^2}.$$
In particular, for any $\theta\in [-\pi, \pi)$ and $\|w\|=1,$ we see that
$$\Big|\Big\langle \frac{(1-\|y\|^2}{2 \|y\|} \mathcal{R}\varphi(y)
 + e^{i\theta}\phi(y), \overline{w}\Big\rangle\Big|
\le \frac{1-\|y\|^2}{2 \|y\|}|\langle \mathcal{R}\phi(y),\overline{w}\rangle| + |\langle\phi(y), \overline{w}\rangle|\le 1.$$
 $$\text{  Hence,~ }~~~~ \Big\| \frac{1-\|y\|^2}{2 \|y\|} \mathcal{R}\phi(y) + e^{i\theta}\phi(y)\Big\|\le 1 ~~\text{ for } \theta\in [-\pi, \pi)~.~~~~~~~~~~~~~~$$
 $$\text{Now integrating over } \theta ~\text{we obtain }~~~~~~~\frac{(1-\|y\|^2)^2}{4\|y\|^2} \|\mathcal{R}\phi(y)\|^2 +\|\phi(y)\|^2\le 1.$$

 In the case $\phi(0)=0$ using $\phi_{y,w}(0)=0$ and scalar Schwarz lemma  we obtain
 $$|\phi_{y,w}(\lambda)|\le |\lambda|$$
 for all $y\in B_E\setminus \{0\}$, $\phi(y)\ne 0 $ and
$w\in  E$. This implies (\ref{sch1}) choosing again $\lambda= \|y\|$ and $w= \frac{\overline{\phi(y)}}{\|\phi(y)\|}$.
This completes the proof.
\end{proof}

For background on analytic (or holomorphic) mappings on infinite dimensional complex spaces we refer to \cite{C}.
\section{The Bloch space}

The classical Bloch space $\mathcal{B}$ is the space of analytic functions on the open unit disk, $f: \D \to \C$, such that the semi-norm $\|f\|_{\mathcal{B}} =\sup_{z \in \D} (1-|z|^2)|f(z)|$ is bounded; it becomes a Banach space  when endowed with the norm $\|f\|_{Bloch}=|f(0)|+\|f\|_{\mathcal{B}}.$  See \cite{Z2} for general background on the classical Bloch space. The Bloch space of functions defined on the finite dimensional Euclidean ball was introduced by R. Timoney in \cite{T80}. See \cite{Z} for further information. \medskip

 A function $f:B_E\to \C$ is a Bloch function if
$$\|f\|_{\B}=\sup_{ x\in B_E} (1-\|x\|^2)\|f'(x)\|<\infty.$$

\noindent The space of Bloch functions is denoted by $\B$ and it has been studied in \cite{BGM}. As in the finite dimensional case, the space $H^{\infty}(B_E)$  is strictly contained in $\B$ (see  \cite[Corollary 4.3]{BGM}) and the following inequality holds for any $f \in H^{\infty}(B_E)$:
\begin{eqnarray} \label{desig hinf}
\|f\|_{\B} \leq \|f\|_{\infty}.
\end{eqnarray}

\noindent An equivalent semi-norm for the space of Bloch functions is given by
$$\|f\|_{inv}=\sup_{ x\in B_E} \|\widetilde\nabla f(x)\|<\infty.$$
This semi-norm satisfies $\|f \circ \phi \|_{inv} =\|f\|_{inv}$ for any $f \in \B$ and any  automorphism $\phi$ of $B_E.$ The space $\B$ is usually endowed with the norm $\|f\|_{Bloch(B_E)}=|f(0)|+\|f\|_{inv}$ and then it becomes a Banach space.

Another equivalent semi-norm is given by
$$\|f\|_{rad}=\sup_{x \in B_E} (1-\|x\|^2) |\mathcal R f(x)|$$
where $\mathcal R f(x)=f'(x)(x)$ is the radial derivative of $f$ at $x.$

We refer to \cite[Thm 3.8]{BGM} for all the equivalences of these semi-norms. In particular, we have the following inequalities:
\be \label{eq21}
\|f\|_{\B} \leq \|f\|_{inv} \leq \left( 1+ \frac{\sqrt{31}}{2} \right) \|f\|_{\B}.
\ee
The following result extends Theorem 5.5 in \cite{Z2} to an infinite dimensional Hilbert space $E$:
\begin{teo} \label{teo met}
Let $f:B_E \to \C$ be an analytic function. Then,
$$\|f\|_{inv}=\sup \left\{ \frac{|f(x)-f(y)|}{\beta_E(x,y)} : x, y \in B_E, \ x \neq y \right\}.$$
\end{teo}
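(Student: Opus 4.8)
The plan is to prove the two inequalities separately, writing $L$ for the supremum on the right-hand side. I expect $\|f\|_{inv}\le L$ to be the easy half, coming from a pointwise estimate of the invariant gradient. Fix $x\in B_E$ and set $g=f\circ\varphi_x$, so that $g(0)=f(x)$ and $\|\widetilde\nabla f(x)\|$ equals the norm of the functional $w\mapsto g'(0)(w)$. For a unit vector $w$ the map $\lambda\mapsto g(\lambda w)$ is holomorphic near $0$, hence $g'(0)(w)=\lim_{t\to0}\frac{g(tw)-g(0)}{t}$ along real $t$. Since $\varphi_x$ is an automorphism, $\rho_E$ and hence $\beta_E$ are $\varphi_x$-invariant, so $\beta_E(\varphi_x(tw),x)=\beta_E(\varphi_x(tw),\varphi_x(0))=\beta_E(tw,0)=\frac12\log\frac{1+|t|}{1-|t|}$, using $\rho_E(tw,0)=\|tw\|=|t|$. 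Therefore $|g'(0)(w)|=\lim_{t\to0}|f(\varphi_x(tw))-f(x)|/|t|\le L\lim_{t\to0}\beta_E(tw,0)/|t|=L$, and taking suprema first over $w$ and then over $x$ gives $\|f\|_{inv}\le L$ (in particular there is nothing to check when $L=\infty$).

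For the reverse inequality I would reduce, via the automorphism-invariance $\|f\circ\varphi_y\|_{inv}=\|f\|_{inv}$, to proving $|g(z)-g(0)|\le\|g\|_{inv}\,\beta_E(z,0)$ for $g=f\circ\varphi_y$ and arbitrary $z\in B_E\setminus\{0\}$; applying this with $z=\varphi_y(x)$ and using $\varphi_y\circ\varphi_y=\mathrm{id}$, $\varphi_y(0)=y$ and $\beta_E(x,y)=\beta_E(\varphi_y(x),0)$ yields $|f(x)-f(y)|\le\|f\|_{inv}\,\beta_E(x,y)$. To prove the reduced statement I would slice: put $u=z/\|z\|$ and $h(\lambda)=g(\lambda u)$ for $|\lambda|<1$, so $h'(\lambda)=g'(\lambda u)(u)=\langle\nabla g(\lambda u),\overline u\rangle$. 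The key step is that $h$ lies in the classical Bloch space of $\mathbb{D}$ with $\|h\|_{\mathcal B}\le\|g\|_{inv}$. This follows from Lemma \ref{lema2} applied to $g$ at the point $\lambda u$ with the \emph{particular} choice $w=u$: since $\|\lambda u\|^2=|\lambda|^2$, $\|u\|^2=1$ and $|\langle u,\lambda u\rangle|^2=|\lambda|^2$, the denominator in \eqref{fund} collapses to $\sqrt{(1-|\lambda|^2)+|\lambda|^2}=1$, so $(1-|\lambda|^2)|h'(\lambda)|\le\|\widetilde\nabla g(\lambda u)\|\le\|g\|_{inv}$. Integrating $h'$ along $[0,\|z\|]$ then gives $|g(z)-g(0)|=|h(\|z\|)-h(0)|\le\|g\|_{inv}\int_0^{\|z\|}(1-t^2)^{-1}\,dt=\|g\|_{inv}\cdot\frac12\log\frac{1+\|z\|}{1-\|z\|}=\|g\|_{inv}\,\beta_E(z,0)$, exactly as needed. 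Combining the two halves finishes the proof.

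The main obstacle I anticipate is obtaining the \emph{constant-free} bound $(1-|\lambda|^2)|h'(\lambda)|\le\|g\|_{inv}$ for the restriction of $g$ to a complex line through the origin: relating slice derivatives to $\|g\|_{inv}$ through the radial seminorm or the equivalence \eqref{eq21} would introduce the spurious factor $1+\sqrt{31}/2$, and it is precisely the exact formula of Lemma \ref{lema2}, together with the test vector $w=u$ (which balances the $(1-\|x\|^2)\|w\|^2$ and $|\langle w,x\rangle|^2$ terms against each other), that keeps the constant equal to $1$. The remaining ingredients --- the $\varphi$-invariance of $\rho_E$, $\beta_E$ and $\|\cdot\|_{inv}$, the identities $\varphi_a(0)=a$ and $\varphi_a\circ\varphi_a=\mathrm{id}$, and the one-variable integration $\int_0^r(1-t^2)^{-1}\,dt=\frac12\log\frac{1+r}{1-r}$ --- are routine.
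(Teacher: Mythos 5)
Your proposal is correct and follows essentially the same route as the paper: the upper bound $\|f\|_{inv}\le L$ comes from differentiating $f\circ\varphi_x$ at the origin and using the M\"obius invariance of $\beta_E$, and the Lipschitz bound comes from reducing via $\varphi_y$ to an estimate at $0$ obtained by integrating a constant-free slice-derivative bound along a radius. The only (cosmetic) difference is that you obtain the bound $(1-|\lambda|^2)|h'(\lambda)|\le\|g\|_{inv}$ directly from Lemma \ref{lema2} with $w=u$, whereas the paper gets the same constant $1$ by integrating $(1-\|xt\|^2)\|f'(xt)\|\le\|f\|_{\mathcal B}$ and invoking the inequality $\|f\|_{\mathcal B}\le\|f\|_{inv}$ from \eqref{eq21}.
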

\begin{proof}
First we prove that
$$\|f\|_{inv} \geq M:=\sup \left\{ \frac{|f(x)-f(y)|}{\beta_E(x,y)} : x, y \in B_E, \ x \neq y \right\}.$$
\noindent If $\|f\|_{inv} = \infty$, then we are done. So take $f \in \B$ and $x,y \in B_E$. Then,
\begin{eqnarray*}
|f(x)-f(0)| &=& \left| \Big( \int_{0}^1 f'(x t)dt \Big)(x) \right| \leq \|x\| \Big\|\int_{0}^1 \frac{f'(xt)(1-\|xt\|^2)}{1-\|xt\|^2} dt\Big\|  \\
&\le&\|f\|_{\B} \int_{0}^1 \frac{\|x\|}{1-\|x\|^2|t|^2} dt = \|f\|_{\B} \frac{1}{2} \log \frac{1+\|x\|}{1-\|x\|}.
\end{eqnarray*}
Consider  $f \circ \varphi_y \in \B$. By the inequality above, (\ref{eq21}) and bearing in mind that $\|f \circ \varphi \|_{inv} = \|f\|_{inv}$ for any automorphism $\varphi$, we have that
$$| f \circ \varphi_y (z) - f \circ \varphi_y (0)| \leq  \|f \circ \varphi_y \|_{inv} \frac{1}{2} \log \frac{1+\|z\|}{1-\|z\|}= \|f \|_{inv} \frac{1}{2} \log \frac{1+\|z\|}{1-\|z\|}.$$
\noindent Selecting $z=\varphi_y(x)$, we have
\begin{eqnarray*}
| f(x) - f (y)| &\leq& \|f \circ \varphi_y \|_{inv} \frac{1}{2} \log \frac{1+\|\varphi_y(x)\|}{1-\|\varphi_y(x)\|}\\&=&
 \|f  \|_{inv} \frac{1}{2} \log \frac{1+\rho_E(x,y)}{1-\rho_E(x,y)}=\|f\|_{inv} \ \beta_E(x,y).
\end{eqnarray*}
\noindent Hence
$\|f\|_{inv} \geq M.$

 Now we prove that $\|f\|_{inv} \leq M$. 
Notice that
$$|f(x)-f(0)| \leq M \beta_E(x,0) = \frac{M}{2} \log \frac{1+\|x\|}{1-\|x\|},$$
\noindent so
$$\frac{|f(x)-f(0)|}{\|x\|} \leq \frac{M}{2\|x\|} \log \frac{1+\|x\|}{1-\|x\|}$$
\noindent for all $x \in B_E \setminus \{ 0 \}.$ For a unit vector $u \in E$, we consider the directional derivative $D_u f(0)$ given by
$$D_u f(0) = \lim_{t \rightarrow 0}\frac{f(0+tu)-f(0)}{t} = \nabla f(0) (u).$$
If $x =tu$ and taking limits when $\|x\| \rightarrow 0$, we have that
$$|\nabla f(0)(u)| \leq M \lim_{\|x\| \rightarrow 0} \frac{1}{2\|x\|} \log \frac{1+\|x\|}{1-\|x\|} = M$$
since
$\lim_{r \rightarrow 0} \frac{1}{r} \log \frac{1+r}{1-r}=2,$
 so $\|\nabla f(0)\| \leq M$. Notice that for any automorphism $\varphi$ on $B_E$, it is clear that
$$M=\sup \left\{ \frac{|(f \circ \varphi)(x)-(f \circ \varphi)(y)|}{\beta_E(x,y)} : x, y \in B_E, \ x \neq y \right\}$$
\noindent since $\beta_E(\varphi(x),\varphi(y))=\beta(x,y)$. Hence for any $x \in B_E$ we have $$\|f\|_{inv} = \sup_{x \in B_E} \|\nabla (f \circ \varphi_x )(0) \| \leq M$$ and we are done.
\end{proof}

\begin{cor} \label{ev bound} If $\delta_x(f)=f(x),$ we have that
$\delta_x \in \mathcal B(B_E)^*$ and  $\|\delta_x\| \le L_x$ where
$$L_x=\max \left\{\frac{1}{2} \log\frac{1+\|x\|}{1-\|x\|} \ , \ 1 \right\}.$$
\end{cor}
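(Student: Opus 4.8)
The plan is to deduce this immediately from Theorem \ref{teo met}. The functional $\delta_x$ is plainly linear, so the only point is the norm estimate, and the main tool is the identity $|f(x)-f(y)|\le\|f\|_{inv}\,\beta_E(x,y)$ furnished by that theorem. First I would record the elementary fact $\rho_E(x,0)=\|\varphi_x(0)\|=\|x\|$: indeed $m_x(0)=x$, and since $P_x(x)=x$ and $Q_x(x)=0$ we get $\varphi_x(0)=(s_xQ_x+P_x)(x)=x$. Consequently
$$\beta_E(x,0)=\frac12\log\frac{1+\rho_E(x,0)}{1-\rho_E(x,0)}=\frac12\log\frac{1+\|x\|}{1-\|x\|}.$$

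Next, for any $f\in\mathcal B(B_E)$, Theorem \ref{teo met} applied with $y=0$ gives $|f(x)-f(0)|\le\|f\|_{inv}\,\beta_E(x,0)$, whence
$$|\delta_x(f)|=|f(x)|\le|f(0)|+\|f\|_{inv}\,\frac12\log\frac{1+\|x\|}{1-\|x\|}.$$
Bounding $|f(0)|$ by $\max\{1,\tfrac12\log\tfrac{1+\|x\|}{1-\|x\|}\}\,|f(0)|$ and $\|f\|_{inv}$ by $\max\{1,\tfrac12\log\tfrac{1+\|x\|}{1-\|x\|}\}\,\|f\|_{inv}$, we obtain $|\delta_x(f)|\le L_x\bigl(|f(0)|+\|f\|_{inv}\bigr)=L_x\,\|f\|_{Bloch(B_E)}$, which is exactly the assertion; in particular $\delta_x$ is continuous and $\|\delta_x\|\le L_x$.

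There is essentially no obstacle here. The estimate $|f(x)-f(0)|\le\|f\|_{inv}\,\beta_E(x,0)$ already appears verbatim inside the proof of Theorem \ref{teo met}, so one could alternatively just quote that line and skip the reference to the theorem statement altogether. The only step worth spelling out is the identity $\rho_E(x,0)=\|x\|$, which is where the explicit form of the M\"obius transform $\varphi_x$ enters, everything else being a one-line comparison of the two summands against $L_x$.
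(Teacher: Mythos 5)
Your proof is correct and follows essentially the same route as the paper's: apply Theorem \ref{teo met} (equivalently, the estimate $|f(x)-f(0)|\le \|f\|_{inv}\,\beta_E(x,0)$ established inside its proof) with $y=0$, identify $\beta_E(x,0)=\frac12\log\frac{1+\|x\|}{1-\|x\|}$, and bound both summands by $L_x$ to get $|f(x)|\le L_x\|f\|_{Bloch(B_E)}$. The only cosmetic differences are that you spell out the identity $\rho_E(x,0)=\|x\|$ explicitly and work with $\|f\|_{inv}$ throughout, while the paper's displayed chain uses the gradient seminorm (which is dominated by $\|f\|_{inv}$); both land on the same bound.
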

\begin{proof} From Theorem \ref{teo met}, we have for any $x \in B_E$
\begin{equation}\label{ineq}
|f(x)-f(0)| \le \frac{1}{2} \|f\|_{\B} \log\frac{1+\|x\|}{1-\|x\|}.
\end{equation}
$$\hbox{ Also }~|\delta_x(f)|  \leq |f(x)-f(0)|+|f(0)| \leq \frac{1}{2} \|f\|_{\B} \log\frac{1+\|x\|}{1-\|x\|} +|f(0)| $$ $$\leq
\max \left\{\frac{1}{2} \log\frac{1+\|x\|}{1-\|x\|} \ , \ 1 \right\} \big(\|f\|_{\B}+|f(0)|\big) = L_x \|f\|_{Bloch(B_E)}.
$$\end{proof}
\begin{rem} For $x,y\in B_E$ we have
\begin{equation}\label{otrades} \frac{1}{2}\|x-y\|\le \rho_E(x,y)\le \|\delta_x-\delta_y\|\le \beta_E(x,y).\end{equation}
In particular, we observe  that the norm topology of $\mathcal B(B_E)$ is finer than the compact open topology $co.$ \medskip
\end{rem}
As consequence of Theorem \ref{teo met}, we have that
\begin{cor}\label{cor}
An analytic function $f:B_E \to \C$ belongs to $\B$ if and only if there exists a constant $C >0$ such that
$$|f(x)-f(y)| \leq C \beta_E(x,y).$$
\end{cor}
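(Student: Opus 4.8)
The plan is to derive Corollary~\ref{cor} directly from Theorem~\ref{teo met} together with the equivalence of seminorms in (\ref{eq21}), since the statement is essentially a repackaging of the metric description of $\|f\|_{inv}$ into a purely qualitative form.

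First I would prove the forward implication. Suppose $f\in\B$. By (\ref{eq21}) we have $\|f\|_{inv}\leq \left(1+\frac{\sqrt{31}}{2}\right)\|f\|_{\B}<\infty$, so $f$ has finite invariant seminorm. Applying Theorem~\ref{teo met},
$$\frac{|f(x)-f(y)|}{\beta_E(x,y)}\leq \|f\|_{inv}$$
for all $x\neq y$ in $B_E$, hence $|f(x)-f(y)|\leq C\,\beta_E(x,y)$ with $C=\|f\|_{inv}$. This direction is immediate.

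For the converse, assume there is a constant $C>0$ with $|f(x)-f(y)|\leq C\,\beta_E(x,y)$ for all $x,y\in B_E$. Then by the very definition of the supremum in Theorem~\ref{teo met},
$$\|f\|_{inv}=\sup\left\{\frac{|f(x)-f(y)|}{\beta_E(x,y)}:x,y\in B_E,\ x\neq y\right\}\leq C<\infty,$$
so $f$ has finite invariant seminorm; and since $\|f\|_{\B}\leq\|f\|_{inv}$ by (\ref{eq21}), we get $f\in\B$. One should also note that $f$ is assumed analytic in the statement, so nothing further is needed to conclude membership in $\B$ beyond the finiteness of the seminorm.

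There is no real obstacle here: the only mild point to watch is that Theorem~\ref{teo met} is stated for \emph{analytic} $f$ and that its proof of the inequality $\|f\|_{inv}\geq M$ implicitly handles the case $\|f\|_{inv}=\infty$, so the equality $\|f\|_{inv}=M$ is valid without assuming a priori that $f\in\B$; this is exactly what makes the converse go through cleanly. Thus the whole corollary is a two-line consequence of the theorem, and I would present it as such.
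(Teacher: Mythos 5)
Your proof is correct and is exactly the argument the paper intends: the corollary is stated as an immediate consequence of Theorem~\ref{teo met} (with the seminorm equivalence (\ref{eq21}) supplying the passage between $\|f\|_{inv}$ and $\|f\|_{\B}$), and your observation that the theorem's equality $\|f\|_{inv}=M$ holds for arbitrary analytic $f$, infinities included, is precisely what makes the converse direction legitimate.
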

Notice that the metric $\beta_E(x,y)$ can be also recovered from the Bloch semi-norm $\|f\|_{inv}$:
\begin{cor}
For any $x,y \in B_E$ we have
$$\beta_E(x,y)=\sup\{ |f(x)-f(y)| : \|f\|_{inv} \leq 1 \}.$$
\end{cor}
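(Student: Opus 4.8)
The plan is to establish the two inequalities separately. One of them is immediate: if $\|f\|_{inv}\le1$ then Theorem \ref{teo met} gives $|f(x)-f(y)|\le\|f\|_{inv}\,\beta_E(x,y)\le\beta_E(x,y)$, so that $\sup\{|f(x)-f(y)|:\|f\|_{inv}\le1\}\le\beta_E(x,y)$. All the content is in showing the supremum is actually attained, and I would do this by exhibiting, for each fixed pair $x\ne y$ (the case $x=y$ being trivial, both sides equal to $0$), an analytic function $f:B_E\to\C$ with $\|f\|_{inv}\le1$ and $|f(x)-f(y)|=\beta_E(x,y)$.

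For the extremal function I would transport the classical one-variable Bloch extremal $L(\zeta):=\frac{1}{2}\log\frac{1+\zeta}{1-\zeta}$, analytic on $\D$, to $B_E$ through a M\"obius map. Put $a:=\varphi_y(x)$, which is nonzero since $x\ne y$, let $u:=a/\|a\|$, and set $g(w):=\langle\varphi_y(w),u\rangle$. Since $|g(w)|\le\|\varphi_y(w)\|<1$, the function $g$ belongs to the unit ball of $H^\infty(B_E)$, and $f:=L\circ g$ is well defined and analytic on $B_E$. Because $\varphi_y(y)=0$ one has $f(y)=L(0)=0$, while, using $\langle a,u\rangle=\|a\|$ and $\|a\|=\|\varphi_y(x)\|=\rho_E(x,y)$ (by the symmetry of $\rho_E$, cf. (\ref{eq2})),
$$f(x)=L(\|a\|)=\frac{1}{2}\log\frac{1+\rho_E(x,y)}{1-\rho_E(x,y)}=\beta_E(x,y),$$
hence $|f(x)-f(y)|=\beta_E(x,y)$.

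It then remains to verify $\|f\|_{inv}\le1$, which by Theorem \ref{teo met} is equivalent to $|f(w)-f(w')|\le\beta_E(w,w')$ for all $w,w'\in B_E$; I would obtain this by composing two contraction estimates. First, on $\D$ we have $|L'(\zeta)|=\frac{1}{|1-\zeta^2|}\le\frac{1}{1-|\zeta|^2}$, so integrating $L'$ along the hyperbolic geodesic $\gamma$ joining $\zeta_1$ and $\zeta_2$, whose hyperbolic length equals $\frac{1}{2}\log\frac{1+\rho(\zeta_1,\zeta_2)}{1-\rho(\zeta_1,\zeta_2)}$, yields
$$|L(\zeta_1)-L(\zeta_2)|=\Big|\int_\gamma L'(\zeta)\,d\zeta\Big|\le\int_\gamma\frac{|d\zeta|}{1-|\zeta|^2}=\frac{1}{2}\log\frac{1+\rho(\zeta_1,\zeta_2)}{1-\rho(\zeta_1,\zeta_2)}.$$
Second, applying (\ref{eq3}) to $g\in H^\infty(B_E)$ with $\|g\|_\infty\le1$ gives $\rho(g(w),g(w'))\le\rho_E(w,w')$, and since $t\mapsto\frac{1}{2}\log\frac{1+t}{1-t}$ is increasing on $[0,1)$ this upgrades to the corresponding inequality for the hyperbolic metrics. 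Chaining the two, $|f(w)-f(w')|=|L(g(w))-L(g(w'))|\le\beta_E(w,w')$, as needed.

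The one genuinely delicate step is this last verification that $\|f\|_{inv}\le1$: tackling it head-on via the invariant-gradient formula of Lemma \ref{lema2} would be awkward with infinitely many variables, whereas factoring the candidate as $L\circ g$ lets one lean on the Schwarz--Pick-type contraction (\ref{eq3}) together with the elementary bound $|L'(\zeta)|\le\frac{1}{1-|\zeta|^2}$, reducing everything to two one-variable facts. (Alternatively one could invoke Theorem \ref{teo met} with $E=\C$ and the well-known identity $\|L\|_{inv}=\sup_{\zeta\in\D}(1-|\zeta|^2)|L'(\zeta)|$, which is $\le1$.)
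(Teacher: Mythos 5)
Your proof is correct. The only caveat worth noting is that the paper itself does not write out the hard inequality: it disposes of the easy direction exactly as you do (via Theorem \ref{teo met}) and then simply points to the pattern of Theorem 3.9 in Zhu's book together with Lemma 3.3 of \cite{BGM}. What you have done is supply a complete, self-contained version of that argument. Your extremal function $f=L\circ g$, with $L(\zeta)=\frac12\log\frac{1+\zeta}{1-\zeta}$ and $g=\langle \varphi_y(\cdot),u\rangle$ for $u=\varphi_y(x)/\|\varphi_y(x)\|$, is essentially the standard construction the citation alludes to, and your computations $f(y)=0$, $f(x)=L(\rho_E(x,y))=\beta_E(x,y)$ are right (using the symmetry of $\rho_E$ from (\ref{eq2})). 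Where your write-up genuinely adds something is the verification that $\|f\|_{inv}\le 1$: instead of estimating the invariant gradient of $f$ directly (which, as you say, is unpleasant in infinitely many variables), you combine the one-variable bound $|L'(\zeta)|\le (1-|\zeta|^2)^{-1}$, integrated along a hyperbolic geodesic, with the contraction property (\ref{eq3}) of $\rho_E$ under unit-ball $H^\infty$ functions, and then convert the resulting Lipschitz estimate $|f(w)-f(w')|\le\beta_E(w,w')$ back into $\|f\|_{inv}\le 1$ via Theorem \ref{teo met}. This is a clean reduction to two one-variable facts, it even shows the supremum is attained, and it uses only tools already available in the paper; it is arguably a better proof to include than the bare reference the authors give.
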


\begin{proof}
By Theorem \ref{teo met} we have that $|f(x)-f(y)| \leq \|f\|_{inv} \ \beta_E(x,y)$ for all $f \in \B$ and $x,y \in B_E.$ Hence, $\sup\{ |f(x)-f(y) | : \|f\|_{inv} \leq 1\} \leq \beta_E(x,y).$

To check the other inequality follow the same pattern as Theorem 3.9 in \cite{Z} and recall \cite[Lemma 3.3]{BGM}.
\end{proof}
\section{Composition operators}
\subsection{Boundedness} As it occurs in the finite dimensional case, every composition operator on $\mathcal B(B_E)$ is bounded.
\begin{teo}\label{cont} Every  analytic map $\phi:B_E\to B_E$  induces a bounded  composition operator $C_\phi:\mathcal B(B_E)\to \mathcal B(B_E)$. 
\end{teo}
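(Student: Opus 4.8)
The plan is to reduce the statement to the metric description of the Bloch semi-norm obtained in Theorem \ref{teo met}, combined with the fact that an analytic self-map of $B_E$ is a contraction for the hyperbolic metric $\beta_E$.

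First I would record the Schwarz--Pick type contraction $\beta_E(\phi(x),\phi(y))\le\beta_E(x,y)$ for all $x,y\in B_E$. This follows at once from (\ref{eq3}): if $h\in H^\infty(B_E)$ with $\|h\|_\infty\le 1$, then $h\circ\phi\in H^\infty(B_E)$ with $\|h\circ\phi\|_\infty\le 1$, hence $\rho(h(\phi(x)),h(\phi(y)))\le\rho_E(x,y)$; taking the supremum over all such $h$ gives $\rho_E(\phi(x),\phi(y))\le\rho_E(x,y)$, and since $t\mapsto\frac12\log\frac{1+t}{1-t}$ is increasing on $[0,1)$, the inequality passes to $\beta_E$. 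This is the only place where the infinite dimension of $E$ could conceivably interfere, and it causes no difficulty.

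Next, given $f\in\mathcal B(B_E)$, Theorem \ref{teo met} applied to $f$ yields, for all $x,y\in B_E$,
$$|f(\phi(x))-f(\phi(y))|\le\|f\|_{inv}\,\beta_E(\phi(x),\phi(y))\le\|f\|_{inv}\,\beta_E(x,y)$$
(the estimate being trivial when $\phi(x)=\phi(y)$). By Corollary \ref{cor} this already shows $C_\phi f=f\circ\phi\in\mathcal B(B_E)$, and feeding the same inequality back into Theorem \ref{teo met}, now applied to $f\circ\phi$, gives $\|C_\phi f\|_{inv}\le\|f\|_{inv}$.

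Finally I would pass to the Banach space norm $\|\cdot\|_{Bloch(B_E)}=|\cdot(0)|+\|\cdot\|_{inv}$ by controlling the constant term through Corollary \ref{ev bound}: $|(C_\phi f)(0)|=|\delta_{\phi(0)}(f)|\le L_{\phi(0)}\|f\|_{Bloch(B_E)}$. Hence
$$\|C_\phi f\|_{Bloch(B_E)}=|f(\phi(0))|+\|f\circ\phi\|_{inv}\le L_{\phi(0)}\|f\|_{Bloch(B_E)}+\|f\|_{inv}\le(L_{\phi(0)}+1)\|f\|_{Bloch(B_E)},$$
so $C_\phi$ is bounded with $\|C_\phi\|\le L_{\phi(0)}+1$; in particular, when $\phi(0)=0$ one gets $\|C_\phi f\|_{inv}\le\|f\|_{inv}$, i.e. $C_\phi$ has norm at most $1$ for the invariant semi-norm. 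There is no real obstacle in this argument once the contraction property of $\beta_E$ under $\phi$ is in hand.
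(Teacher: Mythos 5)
Your argument is correct and coincides with the paper's first proof of Theorem \ref{cont}: both deduce $\|f\circ\phi\|_{inv}\le\|f\|_{inv}$ from Theorem \ref{teo met} together with the $\beta_E$-contractivity of analytic self-maps (which you justify via (\ref{eq3}), a fact the paper simply invokes), and both then control the constant term with Corollary \ref{ev bound} to get $\|C_\phi\|\le 1+L_{\phi(0)}$. No gaps.
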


\begin{proof}
Let $\phi:B_E \to B_E$ be analytic and consider for any $f \in \B$, the semi-norm $\|f \circ \phi\|_{inv}$. By Theorem \ref{teo met}, we have that
\begin{eqnarray*}
\|f \circ \phi \|_{inv}&=&\sup \left\{ \frac{|(f \circ \phi)(x)-(f\circ \phi)(y)|}{\beta_E(x,y)} : x, y \in B_E, \ x \neq y \right\} \\&\leq &
\sup \left\{ \frac{|(f (\phi(x))-(f( \phi(y))|}{\beta_E(\phi(x),\phi(y))} : x, y \in B_E, \ \phi(x) \neq \phi(y) \right\},
\end{eqnarray*}
\noindent where last inequality holds because $\rho_E(x,y)$ is contractive for analytic maps $\phi:B_E \to B_E$ and $h(t)=\frac{1}{2} \log \frac{1+t}{1-t}$ is non-decreasing. Since $\phi(B_E) \subset B_E$, we get the estimate
$$\|f \circ \phi \|_{inv} \leq \sup \left\{ \frac{|f(x)-f(y)|}{\beta_E(x,y)} : x, y \in B_E, \ x \neq y \right\}=\|f\|_{inv}.$$
Further, using Corollary \ref{ev bound},
$$\|C_{\phi}(f)\|_{Bloch(B_E)}=\|f \circ \phi \|_{inv} +|f(\phi(0))| \leq \|f\|_{inv}+L_{\phi(0)}\|f\|_{Bloch(B_E)} \leq$$
$$\|f\|_{inv}+|f(0)|+L_{\phi(0)}\|f\|_{Bloch(B_E)} = (1+L_{\phi(0)}) \|f\|_{Bloch(B_E)},$$

\noindent and we conclude that $C_\phi$ is bounded. 
\end{proof} \medskip

We provide another proof that relies on magnitudes that will appear further on.\smallskip

\begin{proof}  
Let $\|f\|_{inv}\le 1.$ Since
$\mathcal R (f\circ \phi)(z)=\langle \nabla f(\phi(z)),\overline{\mathcal R\phi(z)}\rangle,$
we use Lemma \ref{lema2} and obtain
$$|\mathcal R (f\circ \phi)(z)|^2 \le  \|\widetilde \nabla  f(\phi(z)\|^2 \frac{(1 - \|\phi(z)\|^2) \|\mathcal R\phi(z)\|^2 + |\langle {\mathcal R\phi(z)},\phi(z)\rangle|^2}{( 1 - \|\phi(z)\|^2)^2}.$$
By combining this with Lemma \ref{lema1} we conclude that
$$|\mathcal R (f\circ \phi)(z)|(1 -\|z\|^2) \le \sqrt{5}.$$ Thus the boundedness of $C_\varphi$ is immediate if we assume $\varphi(0)=0.$

If $\phi(0) =x\ne 0$, then we consider the mapping $\psi= \varphi_x \circ \phi,$ for which $\psi(0)=0,$ and the bounded operator $C_\psi.$  Since $\|f\circ\varphi_x\|_{inv} =
\|f\|_{inv} $ it follows, using Corollary \ref{ev bound} as well, that  $C_{\varphi_x}$ is continuous.
Hence $C_\varphi= C_\psi \circ C_{\varphi_x}$ is continuous.
\end{proof}
\begin{rem}
It is clear that if $\phi(0)=0$, then $\|C_{\phi}\| \leq 1$.
\end{rem}
\subsection{Compactness} Now we proceed to discuss necessary and sufficient conditions for a composition operator on $\mathcal B(B_E)$ to be compact. We begin with some necessary ones.
\subsubsection{Necessary conditions}
The following result is a little improvement of a result due to Dai \cite{D} for finitely many  variables.

\begin{lema} \label{helplema}  For each $z\in B_E$ with $\varphi(z)\neq 0,$ there is $\eta(z) \in E$,
$\|\eta(z)\|=1$ with $\langle \phi(z),\eta(z)\rangle=0$ such that for $\xi =\phi(z) + \sqrt{1 - \|\phi(z)\|^2}\eta(z)$ one has
$$|\langle\mathcal R\phi(z),\xi\rangle|\ge\sqrt{1 - \|\phi(z)\|^2} \|\mathcal R\phi(z)\| -\Big (1 + \frac{\sqrt{1 - \|\phi(z)\|^2}}
{\|\phi(z)\|}\Big) |\langle \mathcal R\phi(z),\phi(z)\rangle|.$$
\end{lema}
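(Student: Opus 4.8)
The plan is to decompose $\mathcal R\phi(z)$ into its component along $\phi(z)$ and its component orthogonal to $\phi(z)$, to take $\eta(z)$ in the direction of the orthogonal component, and then to conclude via the reverse triangle inequality together with Pythagoras. Fix $z$ with $a:=\phi(z)\neq 0$ and write $R:=\mathcal R\phi(z)$, $s:=\sqrt{1-\|a\|^{2}}$. Let $P$ be the orthogonal projection of $E$ onto the line $\C a$ and $Q:=I-P$; then $Pu=\langle u,a\rangle\,a/\|a\|^{2}$, so $\|Pu\|=|\langle u,a\rangle|/\|a\|$ and, by Pythagoras, $\|Qu\|^{2}=\|u\|^{2}-|\langle u,a\rangle|^{2}/\|a\|^{2}$ for every $u\in E$. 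If $QR\neq 0$, put $\eta(z):=QR/\|QR\|$: this is a unit vector lying in the range of $Q$, hence it satisfies $\langle a,\eta(z)\rangle=0$. If $QR=0$, let $\eta(z)$ be any unit vector orthogonal to $a$ (such a vector exists because $\dim E\geq 2$, which is the only case in which the statement is not vacuous). In either case $\|\xi\|^{2}=\|a\|^{2}+s^{2}=1$.

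First I would apply the reverse triangle inequality to $\xi=a+s\,\eta(z)$:
\[
|\langle R,\xi\rangle|=\bigl|\langle R,a\rangle+s\,\langle R,\eta(z)\rangle\bigr|\ \geq\ s\,|\langle R,\eta(z)\rangle|-|\langle R,a\rangle|.
\]
Next I would bound $|\langle R,\eta(z)\rangle|$ from below. In the main case $QR\neq 0$, since $PR$ is a multiple of $a$ and hence orthogonal to $\eta(z)$, we get $\langle R,\eta(z)\rangle=\langle QR,\eta(z)\rangle=\|QR\|$. Now $\|QR\|^{2}=\|R\|^{2}-|\langle R,a\rangle|^{2}/\|a\|^{2}$, and $|\langle R,a\rangle|/\|a\|\leq\|R\|$ by Cauchy--Schwarz, so the elementary inequality $\sqrt{\alpha^{2}-\beta^{2}}\geq\alpha-\beta$ for $\alpha\geq\beta\geq 0$ gives
\[
|\langle R,\eta(z)\rangle|=\|QR\|\ \geq\ \|R\|-\frac{|\langle R,a\rangle|}{\|a\|}.
\]
Substituting this into the previous display yields
\[
|\langle R,\xi\rangle|\ \geq\ s\,\|R\|-\Bigl(1+\frac{s}{\|a\|}\Bigr)|\langle R,a\rangle|,
\]
which is the asserted inequality once one substitutes back $a=\phi(z)$, $R=\mathcal R\phi(z)$, $s=\sqrt{1-\|\phi(z)\|^{2}}$.

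There is no serious obstacle here; the only point requiring care is the degenerate case $QR=0$ (which includes $R=0$). Then $R$ is a scalar multiple of $a$, so $|\langle R,a\rangle|/\|a\|=\|R\|$ and the right-hand side of the claimed inequality equals $s\|R\|-(1+s/\|a\|)\|R\|\,\|a\|=-\|R\|\,\|a\|\leq 0\leq|\langle R,\xi\rangle|$; hence the inequality holds for \emph{any} admissible choice of $\eta(z)$. Thus the ``hard part'' reduces to this short case split and to the observation that a unit vector $\eta(z)$ orthogonal to $\phi(z)$ exists precisely when $\phi(z)^{\perp}\neq\{0\}$.
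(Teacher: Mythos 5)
Your proof is correct and follows essentially the same route as the paper's: the same orthogonal decomposition of $\mathcal R\phi(z)$ along $\phi(z)$ and its complement, the same reverse triangle inequality, and the same Pythagorean step $\sqrt{\alpha^2-\beta^2}\ge\alpha-\beta$. Your explicit treatment of the degenerate case $Q\mathcal R\phi(z)=0$ (and of the existence of $\eta(z)$) is a small refinement the paper leaves implicit, but it does not change the argument.
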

\begin{proof} We use the projection theorem for Hilbert spaces, so for each $z \in B_E$ with $\phi(z) \ne 0$
there is $\eta(z) \in E$, $\|\eta(z)\|=1$ with $\langle \phi(z),\eta(z)\rangle=0$ such that
$$\mathcal R\phi(z) = \alpha \frac{\phi(z)}{\|\phi(z)\|} + \beta \eta(z),$$
where $\alpha =\frac{\langle \mathcal R\phi(z),\phi(z)\rangle}{\|\phi(z)\|}$ and
$\beta=\langle\mathcal R\phi(z),\eta(z)\rangle.$  Clearly $\|\xi\|=1$, $\langle \phi(z),\xi\rangle = \|\phi(z)\|^2$
and $\langle\mathcal R\phi(z),\xi\rangle= \langle\mathcal R\phi(z),\phi(z)\rangle +\sqrt{1 - \|\phi(z)\|^2}\beta.$
Moreover, $|\alpha|^2 + |\beta|^2  = \|\mathcal R\phi(z)\|^2,$ so
\begin{eqnarray*}|\langle\mathcal R\phi(z),\xi\rangle|&\ge&\sqrt{1 - \|\phi(z)\|^2} |\beta| -
|\langle \mathcal R\phi(z),\phi(z)\rangle|\\
&\ge& \sqrt{1 - \|\phi(z)\|^2} |(|\mathcal{R}\phi(z)\| - |\alpha|)  -|\langle \mathcal R\phi(z),\phi(z)\rangle|\\
&=&\sqrt{1 - \|\phi(z)\|^2} \|\mathcal R\phi(z)\| - \Big(1 + \frac{\sqrt{1 - \|\phi(z)\|^2}}
{\|\phi(z)\|}\Big) |\langle \mathcal R\phi(z),\phi(z)\rangle|.
\end{eqnarray*}
 \end{proof}
\begin{lema} \label{lema3}  The composition operator $C_\phi:\mathcal B(B_E)\to \mathcal B(B_E)$ is  compact  if and only if for each bounded net $(f_\alpha)$  in  $\mathcal B(B_E)$ such that $f_\alpha \to 0$ in $(\mathcal B(B_E), co)$ it follows   that $\|C_\phi(f_\alpha)\|_{\B} \to 0.$
\end{lema}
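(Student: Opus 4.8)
The plan is to prove this characterization of compactness via a standard "bounded nets converging to zero in the compact-open topology" criterion, adapted to the fact that $\mathcal B(B_E)$ is not separable and hence one must work with nets rather than sequences. First I would observe that, modulo the constants, the relevant object is the semi-norm $\|\cdot\|_{\B}$ (equivalently $\|\cdot\|_{inv}$ by \eqref{eq21}), so it suffices to test compactness of $C_\phi$ on the unit ball of $\mathcal B(B_E)$ and show the image is relatively norm-compact; since $C_\phi$ is bounded by Theorem \ref{cont}, compactness is equivalent to: every bounded net in $\mathcal B(B_E)$ has a subnet whose image under $C_\phi$ converges in norm.

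For the forward implication, suppose $C_\phi$ is compact and let $(f_\alpha)$ be a bounded net with $f_\alpha\to 0$ in $(\mathcal B(B_E),co)$. By compactness, every subnet of $(C_\phi f_\alpha)$ has a further subnet converging in $\mathcal B(B_E)$-norm to some $g$. Since the norm topology is finer than $co$ (Remark after Corollary \ref{ev bound}), that further subnet also converges to $g$ in $co$; but $C_\phi$ is $co$-$co$ continuous (composition with a fixed analytic $\phi$ is continuous for uniform convergence on compact sets, because $\phi$ maps compact sets to compact sets of $B_E$), so $C_\phi f_\alpha\to 0$ in $co$, forcing $g$ a constant, and since the $co$-limit of $f_\alpha$ is $0$ we get $g=C_\phi 0=0$ in $\mathcal B(B_E)$, i.e. $g$ is the zero constant; hence $\|C_\phi f_\alpha\|_{Bloch(B_E)}\to 0$ and in particular $\|C_\phi f_\alpha\|_{\B}\to 0$. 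The standard subnet argument (every subnet has a further subnet converging to the same limit $0$) then gives $\|C_\phi f_\alpha\|_{\B}\to 0$ for the full net.

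For the converse, assume the stated net condition and let $(f_\alpha)$ be any net in the unit ball of $\mathcal B(B_E)$; we must produce a subnet whose image converges in norm. Using Corollary \ref{ev bound}, the family $\{f_\alpha\}$ is pointwise bounded on $B_E$, and by Cauchy estimates it is also locally equi-Lipschitz, hence (via a Montel/Ascoli-type argument for holomorphic functions on balls of Banach spaces, see \cite{C}) there is a subnet $(f_{\alpha_\gamma})$ converging in $co$ to some analytic $f$, which lies in $\mathcal B(B_E)$ with $\|f\|_{inv}\le 1$ by lower semicontinuity of the semi-norm. Then $g_\gamma:=f_{\alpha_\gamma}-f$ is a bounded net with $g_\gamma\to 0$ in $co$, so by hypothesis $\|C_\phi g_\gamma\|_{\B}\to 0$, i.e. $\|C_\phi f_{\alpha_\gamma}-C_\phi f\|_{\B}\to 0$; since also $C_\phi f_{\alpha_\gamma}(0)\to C_\phi f(0)$ by pointwise convergence, $C_\phi f_{\alpha_\gamma}\to C_\phi f$ in $\mathcal B(B_E)$-norm, proving relative compactness of the image.

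The main obstacle is the converse direction's normality argument: in infinite dimensions the unit ball of $E$ is not relatively compact, so one cannot invoke the classical Montel theorem directly. One must instead extract a $co$-convergent subnet by combining pointwise boundedness (from Corollary \ref{ev bound}) with local equicontinuity coming from the Cauchy integral estimates for bounded families of holomorphic functions, and then appeal to the general theory in \cite{C} that a locally bounded, locally equicontinuous family of holomorphic functions on an open subset of a Banach space is relatively compact in the compact-open topology. A secondary subtlety is checking that the $co$-limit $f$ indeed belongs to $\mathcal B(B_E)$ with $\|f\|_{inv}\le 1$, which follows from the metric description in Theorem \ref{teo met} (or Corollary \ref{cor}) together with pointwise convergence, since the inequality $|f_{\alpha_\gamma}(x)-f_{\alpha_\gamma}(y)|\le\beta_E(x,y)$ passes to the limit.
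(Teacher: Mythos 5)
Your proof is correct. The forward direction coincides with the paper's: compactness of $C_\phi$ together with the fact that the norm topology of $\mathcal B(B_E)$ is finer than $co$ forces any norm-convergent subnet of $(C_\phi(f_\alpha))$ to have limit $0$, hence the whole net converges to $0$ in norm. In the converse you take a direct route where the paper argues by contrapositive: the paper produces from non-compactness a normalized sequence with $\varepsilon$-separated images, extracts a $co$-convergent subnet via the infinite-dimensional Montel theorem \cite[Theorem 17.21]{C}, and then forms differences $g_{n(\alpha)}=f_{n(\alpha)}-f_{n(\beta)}$ of terms of that subnet, which give a bounded net $co$-converging to $0$ with $\|C_\phi(g_{n(\alpha)})\|_{\B}\ge\varepsilon$ --- a trick that avoids ever identifying the $co$-limit. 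You instead extract a $co$-convergent subnet of an arbitrary bounded net, verify that its limit $f$ again lies in $\mathcal B(B_E)$ (correctly, via the Lipschitz characterization of Theorem \ref{teo met} passing to the pointwise limit), and apply the hypothesis to $f_{\alpha_\gamma}-f$. Both converses rest on the same Montel/Ascoli compactness in $(H(B_E),co)$; the paper's differencing device saves the lower-semicontinuity step you need, while your direct argument makes explicit that the image of the unit ball is relatively norm-compact. I see no gaps; the only cosmetic slip is the phrase ``forcing $g$ a constant'' in the forward direction, where Hausdorffness of $co$ already identifies $g=0$ outright.
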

\begin{proof}
Suppose that $C_\phi:\mathcal B(B_E)\to \mathcal B(B_E)$ is compact
and let $(f_\alpha)$ be a bounded net in  $\mathcal B(B_E)$ such that $f_\alpha \to 0$ in
$(\mathcal B(B_E), co)$. Then also $C_\phi(f_\alpha)\to 0$ in
$(\mathcal B(B_E),co)$ and the norm closure of the set $\{C_\varphi(f_\alpha),  0\}$ is compact in $\mathcal B(B_E)$.  Therefore $\|C_\phi(f_\alpha)\|_{\B} \to 0.$

If $C_\phi$ is non-compact, then there are $\varepsilon > 0$ and a sequence $(f_n)$ in $\mathcal B(B_E)$ such that $\|f_n\|_{\B} =1$ and
$$\|C_\phi(f_n) - C_\phi(f_m)\|_{\B}\ge \varepsilon \ \text{for each} \  n\ne m.$$ Now by Montel's theorem (see \cite[Theorem 17.21]{C}), there is a subnet $(f_{n(\alpha)})$
of $(f_n)$  that converges uniformly on compact subsets  of $B_E$ in $H(B_E)$. For each $n(\alpha)$, choose $n(\beta) > n(\alpha)$ such that
$f_{n(\alpha)}\ne f_{n(\beta)}$ and let $g_{n(\alpha)} = f_{n(\alpha)}- f_{n(\beta)}.$ Then $g_{n(\alpha)} \to 0$ in $(\mathcal B(B_E), co)$, but
$\|C_\phi(g_{n(\alpha)})\|_{\B} \ge \varepsilon > 0.$
\end{proof}
\begin{teo} \label{sufi1} Assume that $C_\phi:\mathcal B(B_E)\to
\mathcal B(B_E)$ is a compact operator. Then

\begin{equation} \label{c0} \phi(\delta B_E) \hbox{ is relatively compact for each } 0<\delta<1,  \end{equation}
\begin{equation} \label{c1}\lim_{\|\phi(z)\|\to 1}\frac{(1- \|z\|^2) \|\mathcal R \phi(z)\|}{\sqrt{1- \|\phi(z)\|^2}}=0, \hbox{ and }  \end{equation}

 \begin{equation} \label{c2}\lim_{\|\phi(z)\|\to 1}\frac{(1- \|z\|^2) |\langle  \phi(z), {\mathcal R\phi(z)}\rangle|}{1- \|\phi(z)\|^2}=0. \end{equation}\end{teo}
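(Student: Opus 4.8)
The plan is to exploit Lemma~\ref{lema3}: it suffices to exhibit, for each sequence $z_n \in B_E$ witnessing a failure of one of \eqref{c0}--\eqref{c2}, a bounded sequence $(f_n)$ in $\mathcal B(B_E)$ converging to $0$ uniformly on compact sets but with $\|C_\phi f_n\|_{\mathcal B}$ bounded away from $0$. For \eqref{c0}, if some $\phi(\delta B_E)$ is not relatively compact, then there is a sequence $x_k = \phi(w_k)$ with $\|w_k\|\le\delta$ that is $\varepsilon$-separated; using \eqref{otrades} and \eqref{eq6} one builds, for a suitable subsequence, functions $g_k \in H^\infty(B_E)$ with $\|g_k\|_\infty \le 1$, $g_k(x_k)$ large and $g_k \to 0$ pointwise (hence, by normality, uniformly on compacts), and since on $\delta B_E$ the Bloch norm controls the sup norm up to the constant $L_\delta$, the functions $f_k = g_k$ do the job — the key point is that $\|C_\phi g_k\|_{Bloch} \ge |g_k(\phi(w_k)) - g_k(\phi(0))|/L_\delta$ stays large while $g_k \to 0$ in $co$.

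For \eqref{c1} and \eqref{c2} the idea is the standard test-function construction adapted to the ball. Suppose \eqref{c1} fails: there are $z_n$ with $\|\phi(z_n)\| \to 1$ and
$$\frac{(1-\|z_n\|^2)\|\mathcal R\phi(z_n)\|}{\sqrt{1-\|\phi(z_n)\|^2}} \ge c > 0.$$
Write $a_n = \phi(z_n)$. Using Lemma~\ref{helplema} pick the unit vector $\xi_n = a_n + \sqrt{1-\|a_n\|^2}\,\eta_n$ and consider the functions
$$f_n(x) = \bigl(1-\|a_n\|^2\bigr)^{1/2}\,\frac{\langle x, \overline{\xi_n}\rangle}{1 - \langle x, a_n\rangle}$$
(or a small power/variant thereof). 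These are bounded in the Bloch norm uniformly in $n$ — this is checked by a direct computation of $\nabla f_n$, exactly as in the one-variable Bloch theory, and since $\|a_n\| \to 1$ one has $f_n \to 0$ in $co$. On the other hand, $\mathcal R(f_n\circ\phi)(z_n) = \langle \nabla f_n(a_n), \overline{\mathcal R\phi(z_n)}\rangle$, and evaluating $\nabla f_n$ at $a_n$ produces a dominant term of size $\|\mathcal R\phi(z_n)\| / \sqrt{1-\|a_n\|^2}$ in the direction $\xi_n$; Lemma~\ref{helplema} guarantees $|\langle \mathcal R\phi(z_n),\xi_n\rangle|$ captures that size provided the cross term $|\langle\mathcal R\phi(z_n),\phi(z_n)\rangle|$ is negligible — which is where one must split into two cases according to whether \eqref{c2} is already known to hold along the sequence. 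Multiplying by $(1-\|z_n\|^2)$ then gives $\|C_\phi f_n\|_{\mathcal B} \gtrsim c$, contradicting compactness. The case where \eqref{c2} fails is handled by the simpler test functions $f_n(x) = \log\bigl(1/(1-\langle x, a_n/\|a_n\|\rangle)\bigr)$ suitably normalized, or equivalently $g_n(x) = (1 - \langle x, a_n\rangle)^{-1}\langle x,\overline{a_n}\rangle$ rescaled, for which $\mathcal R(f_n\circ\phi)(z_n)$ is controlled below by $|\langle\mathcal R\phi(z_n),\phi(z_n)\rangle|/(1-\|\phi(z_n)\|^2)$.

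I expect the main obstacle to be the bookkeeping that shows the test functions $f_n$ are \emph{uniformly} bounded in the Bloch semi-norm while simultaneously isolating from $\mathcal R(f_n\circ\phi)(z_n)$ a quantity that Lemma~\ref{helplema} can bound from below. Concretely: the radial derivative $\mathcal R(f_n\circ\phi)(z_n)$ mixes the ``good'' direction $\xi_n$ with the ``bad'' direction $\phi(z_n)$, and one needs the inequality in Lemma~\ref{helplema} together with the Schwarz--Pick estimates \eqref{sch2}, \eqref{sch3} of Lemma~\ref{lema1} to argue that either $|\langle\mathcal R\phi(z_n),\phi(z_n)\rangle|$ is already small compared to $\sqrt{1-\|\phi(z_n)\|^2}\,\|\mathcal R\phi(z_n)\|$ (so the good term survives and \eqref{c1} follows), or else it is comparably large, in which case \eqref{c2} is the one that fails and the simpler test functions apply. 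Threading this dichotomy cleanly — so that a single contradiction with Lemma~\ref{lema3} yields both \eqref{c1} and \eqref{c2} — is the delicate part; the rest is the routine one-variable Bloch-norm estimates transplanted to $B_E$ via the formula in Lemma~\ref{lema2}.
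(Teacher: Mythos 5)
Your overall architecture --- reduce to Lemma \ref{lema3}, test against a bounded $co$-null family, and use Lemma \ref{helplema} to recover $\|\mathcal R\phi(z)\|$ from a distinguished direction, proving \eqref{c2} and then feeding it into the proof of \eqref{c1} --- is the paper's. But the concrete test functions you propose for \eqref{c1} do not have the property you assert. With $\xi_n=a_n+\sqrt{1-\|a_n\|^2}\,\eta_n$, the function $f_n(x)=(1-\|a_n\|^2)^{1/2}\langle x,\overline{\xi_n}\rangle/(1-\langle x,a_n\rangle)$ splits as
$$(1-\|a_n\|^2)^{1/2}\frac{\langle x,\overline{a_n}\rangle}{1-\langle x,a_n\rangle}+(1-\|a_n\|^2)\frac{\langle x,\overline{\eta_n}\rangle}{1-\langle x,a_n\rangle},$$
and the first summand is \emph{not} uniformly Bloch: already in one variable $g_a(z)=(1-a^2)^{1/2}z/(1-az)$ has $g_a'(z)=(1-a^2)^{1/2}(1-az)^{-2}$, so $(1-a^2)|g_a'(a)|=(1-a^2)^{-1/2}\to\infty$. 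The square-root normalization is correct only for the component orthogonal to $a_n$ (where \eqref{eq2} gives $\|(1-\|a\|^2)^{1/2}\langle\,\cdot\,,\overline{\eta}\rangle/(1-\langle\,\cdot\,,a\rangle)\|_\infty\le 1$); the component along $a_n$ needs the full factor $1-\|a_n\|^2$. So the test function must be built from $\eta_n$ alone, and then Lemma \ref{helplema} plus the already-established \eqref{c2} closes the argument --- no case dichotomy is needed. Likewise for \eqref{c2}: the normalized logarithm $\log\bigl(1/(1-\langle x,a_n/\|a_n\|\rangle)\bigr)/\log\bigl(1/(1-\|a_n\|^2)\bigr)$ is uniformly Bloch and $co$-null, but the normalizing factor tends to $0$ and reappears in $\mathcal R(f_n\circ\phi)(z_n)$, destroying the lower bound (and without the normalization the family is not $co$-null). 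You need a squared logarithm, the correctly rescaled rational function $(1-\|a_n\|^2)/(1-\langle x,a_n\rangle)$, or, as the paper does, the powers $\langle x,\xi_k\rangle^{n_k}$ with $n_k=\lfloor(1-\|a_k\|)^{-1}\rfloor$, which are uniformly Bloch, automatically $co$-null, and serve both \eqref{c1} and \eqref{c2}.

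For \eqref{c0} your route genuinely differs from the paper's, which just observes that $C_\phi^*$ is compact, $\{\delta_z:\|z\|\le\delta\}$ is bounded, $C_\phi^*\delta_z=\delta_{\phi(z)}$, and $\|x-y\|\le 2\|\delta_x-\delta_y\|$ by \eqref{otrades}. Your peaking-function idea can be repaired --- e.g.\ extract a weakly convergent subsequence $x_k\rightharpoonup x_0$ from a norm-separated sequence in $\phi(\delta B_E)$ and test with the affine functionals $\langle\,\cdot\,-x_0,\overline{x_k-x_0}\rangle$, which are bounded in $\mathcal B(B_E)$, $co$-null, and stay large at $x_k$ --- but as written the construction of $g_k$ with $g_k(x_k)$ large and $g_k\to 0$ pointwise is not supplied, and \eqref{eq6} by itself does not produce such a sequence. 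In short: the skeleton is right, but the two places you defer to ``routine one-variable Bloch-norm estimates'' are exactly where the stated functions fail, so the proof as proposed does not close.
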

\begin{proof} 
First we prove $(\ref{c0})$: Indeed, since the set $\{\delta_z: \|z\|\le \delta\}\subset \big(\mathcal{B}(B_E)\big)^*$ is bounded and
 $C_\phi^*$ is compact,
$\{C_\phi^*(\delta_z): \|z\|\le \delta\}$ is relatively compact in
$\mathcal B(B_E)^*.$ The fact that $C_\phi^*(\delta_z)=\delta_{\varphi(z)}$ allows us
to conclude that
$\phi(\delta B_E)$  is relatively compact by appealing to $(\ref{otrades}).$

  Let $(n_k)$ be an increasing sequence in $\mathbb N$ and $(\xi_k)$ a sequence in $E$ with $\|\xi_k\|\le 1$. According to \cite[Corollary 4.3]{BGM} the family $\{\langle z,\xi\rangle^{n_k}: \|\xi\|=1\}$ is bounded in $\mathcal B(B_E).$
Furthermore the resulting sequence $\{\langle z,\xi_k\rangle^{n_k}\}$  converges to zero in $(\mathcal B(B_E),co)$ and therefore the compactness of $C_\phi:\mathcal B(B_E)\to
\mathcal B(B_E)$ implies, according to Lemma \ref{lema3}, that
\begin{equation} \label{conv}\lim_k\|\langle \phi,\xi_k\rangle^{n_k}\|_{rad}\to 0, \ \text{when} \
k\to\infty.\end{equation}
$$\text{ We have  }~~ \|\langle \phi,\xi_k\rangle^{n_k}\|_{rad}
= \sup_{z\in B_E}(1- \|z\|^2)n_k|\langle  \phi(z), \xi_k\rangle|^{{n_k}-1}|\langle {\mathcal R\phi(z)}, \xi_k\rangle|.$$

Let us first show (\ref{c2}).
We suppose that there exist $\e >0$ and a sequence $(z_k)\in B_E$ such that
$\|\phi(z_k)\|\to 1$ and for each $k$,
\be\label{cond1}
\frac{1- \|z_k\|^2}{ 1- \|\phi(z_k)\|^2}|\langle  \phi(z_k), {\mathcal R\phi(z_k)}\rangle|\ge \e
\end{equation}

Let $n_k$ be the integer part of $\frac{1}{1 - \|\phi(z_k)\|}$
and choose $\xi_k= \frac{\phi(z_k)}{\|\phi(z_k)\|}.$
Since $\lim_k (1-\|\phi(z_k)\|)n_k=1$ and $\lim_k \|\phi(z_k)\|^{{n_k}-2}=\frac{1}{e},$ it follows from (\ref{conv}) that
$$0=\lim_{k\to\infty}\frac{1- \|z_k\|^2}{ 1- \|\phi(z_k)\|^2}\|\phi(z_k)\|^{{n_k}-2}
|\langle {\mathcal R\phi(z_k)}, \phi(z_k)\rangle|$$
$$= \frac{1}{e} \lim_{k\to\infty}\frac{1- \|z_k\|^2}{ 1- \|\phi(z_k)\|^2}
|\langle {\mathcal R\phi(z_k)}, \phi(z_k)\rangle|,$$
which gives a contradiction if (\ref{cond1}) holds. Thus (\ref{c2}) holds.
\smallskip

Let us now show (\ref{c1}). As above we suppose that there exist $\e >0$ and a sequence $(z_k)\in B_E$ such that
\be\label{cond2}
 \frac{1- \|z_k\|^2}{\sqrt{ 1- \|\phi(z_k)\|^2}}  \|{\mathcal R\phi(z_k)}\|\ge \e.
\end{equation}
 Choosing now $n_k$ the integer part of $\frac{1}{1 - \|\phi(z_k)\|}$ and $\xi_k =\phi(z_k) + \sqrt{1 - \|\phi(z_k)\|^2}\eta(z_k)$ with  $\|\eta(z_k)\|=1$ and
$\langle \phi(z_k),\eta(z_k)\rangle =0,$ then we obtain  from (\ref{conv})
$$0=\lim_{k\to\infty}\frac{1- \|z_k\|^2}{ 1- \|\phi(z_k)\|^2}(\|\phi(z_k)\|^{n_k -1})^2|\langle {\mathcal R\phi(z_k)}, \xi_k\rangle|$$
$$= \frac{1}{e^2} \lim_{k\to\infty}\frac{1- \|z_k\|^2}{ 1- \|\phi(z_k)\|^2}
|\langle {\mathcal R\phi(z_k)}, \xi_k\rangle|.$$
This together with condition $(\ref{c2})$ and Lemma \ref{helplema} yields a contradiction to (\ref{cond2}). So $(\ref{c1})$ holds. \end{proof}
\begin{rem} Realize that conditions $(\ref{c1})$ and $(\ref{c2})$ hold trivially true in case $\phi(B_E)\subset r B_E$ for some $0\le r<1.$ \end{rem}

\begin{rem} Note that $\phi(z)=z$ satisfies $(\ref{c1})$ and fails $(\ref{c2})$.
Also observe that $$
\frac{(1-\|z\|^2)\langle \mathcal R\phi(z), \phi(z)\rangle}{1- \|\phi(z)\|^2}=
\frac{(1-\|z\|^2)\|\mathcal R \phi(z)\|}{\sqrt{1-\|\phi(z)\|^2}} \frac{\langle \frac{\mathcal R\phi(z)}{\|\mathcal R\phi(z)\|}, \phi(z)\rangle}{\sqrt{1-\|\phi(z)\|^2}}.
$$
Hence $(\ref{c2})$ implies $(\ref{c1})$ if there exists $\delta>0$ such that $$\inf_{\|\phi(z)\|\ge \delta}\frac{|\langle \frac{\mathcal R\phi(z)}{\|\mathcal R\phi(z)\|}, \phi(z)\rangle|}{\sqrt{1-\|\phi(z)\|^2}}>0.$$
\end{rem}

\begin{prop} \label{sufi3} Let $\phi:B_E\to B_E$ be analytic such that $C_\phi:\mathcal B(B_E)\to
\mathcal B(B_E)$ is a compact operator. Then
$\{\mathcal R\phi(z): \|z\|\le \delta\}$ is relatively compact for all $\delta<1$ as well as $\{(1-\|z\|^2)\mathcal R\phi(z):z\in B_E\}$.
\end{prop}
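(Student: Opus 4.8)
The plan is to exploit the compactness of $C_\phi$ via its adjoint acting on suitable evaluation-type functionals, exactly as in the proof of $(\ref{c0})$, but now testing against derivatives rather than point evaluations. For the first assertion, fix $\delta<1$ and a unit vector $w\in E$; I would consider the bounded linear functional $T_{z,w}$ on $\mathcal B(B_E)$ defined by $T_{z,w}(f)=\langle \nabla f(z),\overline w\rangle=f'(z)(w)$. Using Lemma \ref{lema2} together with the bound $(\ref{eq21})$, one sees that $\|T_{z,w}\|$ is bounded uniformly for $\|z\|\le\delta$ and $\|w\|\le 1$ (the denominator in $(\ref{fund})$ is bounded below by $\sqrt{1-\delta^2}\,\|w\|$, so $|T_{z,w}(f)|\le C_\delta\|f\|_{inv}$). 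Then $C_\phi^*(T_{z,w})$ is the functional $f\mapsto \langle \nabla f(\phi(z)),\overline{w'}\rangle$ evaluated after applying the chain rule — concretely $C_\phi^*(T_{z,w})(f)=(f\circ\phi)'(z)(w)=f'(\phi(z))(\phi'(z)(w))$, i.e. $C_\phi^*(T_{z,w})=T_{\phi(z),\,\phi'(z)(w)}$ after the appropriate conjugation bookkeeping. Since $\{T_{z,w}:\|z\|\le\delta,\ \|w\|\le 1\}$ is bounded and $C_\phi^*$ is compact, the image set $\{C_\phi^*(T_{z,w})\}$ is relatively norm-compact in $\mathcal B(B_E)^*$.

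The point is now to recover $\mathcal R\phi(z)=\phi'(z)(z)$ from this family. Taking $w=z/\|z\|$ (for $z\ne 0$) gives $C_\phi^*(T_{z,z/\|z\|})=T_{\phi(z),\,\mathcal R\phi(z)/\|z\|}$. I would then argue that the map $\xi\mapsto T_{y,\xi}$ from $E$ into $\mathcal B(B_E)^*$ is, on bounded sets of $y$ staying inside $\delta' B_E$, bounded below: again from $(\ref{fund})$, $\|T_{y,\xi}\|\ge c\,|\langle\nabla f(y),\overline\xi\rangle|$ for a well-chosen test function, and by picking $f$ linear, $f(x)=\langle x,\overline\xi\rangle$ (which lies in $H^\infty(B_E)\subset\mathcal B(B_E)$ with controlled norm), one gets $\|T_{y,\xi}\|\gtrsim \|\xi\|$ uniformly. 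Composing: a relatively compact set of functionals $\{T_{\phi(z),\mathcal R\phi(z)/\|z\|}:\|z\|\le\delta\}$ forces $\{\mathcal R\phi(z)/\|z\|:\|z\|\le\delta\}$, hence $\{\mathcal R\phi(z):\|z\|\le\delta\}$, to be relatively compact in $E$ — using that $\|z\|\le\delta<1$ keeps the scalar factors bounded and that $\phi(\delta B_E)$ is already relatively compact by $(\ref{c0})$, so the ``base point'' $\phi(z)$ varies in a compact set and the correspondence $\xi\mapsto T_{y,\xi}$ is then a uniform isomorphism onto its range.

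For the second assertion, that $\{(1-\|z\|^2)\mathcal R\phi(z):z\in B_E\}$ is relatively compact, I would combine two regimes. On $\{\,\|\phi(z)\|\ge\delta\,\}$ for fixed $\delta$: here I invoke the Schwarz–Pick estimate $(\ref{sch3})$, $\|\mathcal R\phi(z)\|\le 2(1-\|\phi(z)\|^2)^{1/2}/(1-\|z\|^2)$, so $(1-\|z\|^2)\|\mathcal R\phi(z)\|\le 2(1-\|\phi(z)\|^2)^{1/2}$, and more importantly conditions $(\ref{c1})$–$(\ref{c2})$ from Theorem \ref{sufi1} say this tends to $0$ as $\|\phi(z)\|\to 1$; so outside a set where $\|\phi(z)\|$ is bounded away from $1$, the vectors $(1-\|z\|^2)\mathcal R\phi(z)$ are arbitrarily small in norm, contributing only a relatively compact (indeed totally bounded) tail. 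On the complementary regime $\{\,\|\phi(z)\|\le 1-\eta\,\}$: I would rerun the adjoint/compactness argument of the first part but with the functionals $(1-\|z\|^2)T_{z,z}$, noting $(1-\|z\|^2)\|T_{z,z}\|$ stays bounded on all of $B_E$ (the factor $(1-\|z\|^2)$ kills the blow-up), so $\{(1-\|z\|^2)C_\phi^*(T_{z,z})\}=\{(1-\|z\|^2)T_{\phi(z),\mathcal R\phi(z)}\}$ is relatively compact; restricting to $\|\phi(z)\|\le 1-\eta$ keeps the base point in a relatively compact set (again $(\ref{c0})$) and the $\xi\mapsto T_{y,\xi}$ correspondence a uniform isomorphism there, yielding relative compactness of $\{(1-\|z\|^2)\mathcal R\phi(z):\|\phi(z)\|\le 1-\eta\}$. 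A standard $\varepsilon$-net argument glues the two regimes: cover the small-norm tail by finitely many $\varepsilon$-balls and the compact bulk by finitely many more.

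The main obstacle I anticipate is the bookkeeping in identifying $C_\phi^*$ on these derivative functionals and, crucially, establishing the \emph{uniform} two-sided estimate $\|T_{y,\xi}\|\asymp\|\xi\|$ when $y$ ranges over a relatively compact subset of $B_E$ — the upper bound is Lemma \ref{lema2} plus $(\ref{eq21})$, but the lower bound requires choosing explicit test functions (linear functionals, or their powers) whose Bloch norms are controlled, and checking that the infimum of $\|T_{y,\xi}\|/\|\xi\|$ over the compact set of relevant $y$'s is strictly positive; this is where the relative compactness furnished by $(\ref{c0})$ is doing essential work, since in infinite dimensions one cannot simply take a continuity-plus-compactness shortcut over all of $B_E$. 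The interplay making the $(1-\|z\|^2)$ factor both tame the functionals globally and, via $(\ref{c1})$–$(\ref{c2})$, force smallness near the boundary is the conceptual heart, and I'd want to state the gluing carefully.
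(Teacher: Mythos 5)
Your core mechanism is the same as the paper's: apply the compact adjoint $C_\phi^*$ to the bounded family of derivative--evaluation functionals $f\mapsto f'(z)(w)$ (with $w=z$, respectively weighted by $1-\|z\|^2$), identify the image as the corresponding functionals based at $(\phi(z),\mathcal R\phi(z))$, and then recover the vectors $\mathcal R\phi(z)$ from the relatively compact set of functionals. So the architecture is correct and essentially identical to the paper's proof.

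The one step whose justification does not hold up is the recovery step. You want $\xi\mapsto T_{y,\xi}$ to be a ``uniform isomorphism onto its range'' as $y$ ranges over a \emph{compact} set of base points, and in the second assertion you invoke (\ref{c0}) to claim that the base points $\{\phi(z):\|\phi(z)\|\le 1-\eta\}$ form a relatively compact set; but (\ref{c0}) concerns $\phi(\delta B_E)$, and $\{z:\|\phi(z)\|\le 1-\eta\}$ need not be contained in any $\delta B_E$, so that appeal fails. Moreover, the single-functional bound $\|T_{y,\xi}\|\gtrsim\|\xi\|$ is not what total boundedness requires: you need a lower bound for \emph{differences} $T_{y,\xi}-T_{y',\xi'}$, which is not of the form $T_{y,\xi-\xi'}$ when $y\ne y'$. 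Both issues evaporate once you observe that your own choice of test functions already does the job base-point-free: for $e_u(x)=\langle x,u\rangle$ with $\|u\|\le 1$ (a bounded subset of $\mathcal{B}(B_E)$ by (\ref{eq21})) one has $T_{y,\xi}(e_u)=\langle\xi,u\rangle$ \emph{independently of} $y$, whence $\|\xi-\xi'\|=\sup_{\|u\|\le 1}|(T_{y,\xi}-T_{y',\xi'})(e_u)|\le C\,\|T_{y,\xi}-T_{y',\xi'}\|$ for arbitrary base points. This transfers total boundedness of the functionals directly to the vectors, makes the compactness of $\{\phi(z)\}$ irrelevant, and also renders your two-regime decomposition of the second assertion unnecessary: $\{(1-\|z\|^2)T_{z,z}:z\in B_E\}$ is bounded on all of $B_E$, so its image under $C_\phi^*$ is relatively compact globally and the same test-function estimate finishes in one stroke --- which is exactly how the paper argues. (Taking $w=z$ rather than $w=z/\|z\|$ in the first part likewise avoids the artificial singularity at $z=0$.)
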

\begin{proof}
For $z\in B_E \text{ and } w\in E$ we consider the linear functional  $\lambda_{z,w}$ acting on $f\in \mathcal{B}(B_E)$ according to $\lambda_{z,w}(f)=f'(z)(w)=\langle w, \overline{\nabla f(z)}\rangle.$ It is continuous since $|\lambda_{z,w}(f)|\le \frac{\|w\|}{1-\|z\|^2}\|f\|_{\B}.$ Realize that $$C_\phi^*(\lambda_{z,w})(f)=\lambda_{z,w}(f\circ \phi)=\langle \phi'(z)w, \overline{\nabla f(\phi(z))}\rangle,$$ and thus that $C_\phi^*(\lambda_{z,z})=\lambda_{\phi (z), \mathcal{R}\phi (z)}.$
\smallskip

Notice that
$\mathcal R\phi(\delta B_E)$ is a bounded subset of $E$ by (\ref{sch3}) in Lemma \ref{lema1}.

Since $C_\phi^*$ is compact and $\sup\{\| \lambda_{z,z}\|:\|z\|\le \delta\}<\infty$ then
$$\{C_\phi^*(\lambda_{z,z}): \|z\|\le \delta\}= \{\lambda_{\phi (z), \mathcal{R}\phi (z)}: \|z\|\le \delta\}$$ is relatively compact in
$\mathcal B(B_E)^*$. Now we conclude that
$\mathcal R\phi(\delta B_E)$ is relatively compact because for the function $e_u(z)= \langle z, u\rangle,$ we have
$\mathcal R C_\phi(e_u)(z)=\langle\mathcal{R}\phi (z),u\rangle=\lambda_{\phi (z), \mathcal{R}\phi (z)}(e_u)$ and  hence
$$\|\mathcal R\phi(z)-\mathcal R\phi(z')\|=\sup_{\|u\|\leq 1} |\langle \mathcal R\phi(z)-\mathcal R\phi(z'),u\rangle |\le \|\lambda_{\phi(z), \mathcal{R}\phi (z)} -\lambda_{\phi(z'), \mathcal{R}\phi (z')}\|.$$

Moreover, $\{(1-\|z\|^2)\lambda_{z,w}:z,w\in B_E\}$ is also a bounded set in
$\mathcal B(B_E)^*$ and thus $$\{C_\phi^*((1-\|z\|^2)\lambda_{z,z}): \|z\|<1\}=
\{(1-\|z\|^2)\lambda_{\phi (z), \mathcal{R}\phi (z)}:\|z\|<1\}$$ is a relatively compact set. So the compactness of $\{(1-\|z\|^2)\mathcal R\phi(z):z\in B_E\}$ follows as above.
\end{proof}

There are also necessary conditions in terms of the components of the symbol $\varphi.$
Recall that $(e_k)_{k\in \Gamma}$ is an orthonormal basis of $E$ and $\phi=\sum_{k \in \Gamma} \phi_k(x) e_k$. Here, $\varphi_k= \langle \varphi, e_k \rangle.$
\begin{prop} \label{sufi2} Assume that $C_\phi:\mathcal B(B_E)\to \mathcal B(B_E)$ is a compact operator. Then
\begin{equation} \label{c4} C_{\phi_{k,l}}: \mathcal B \to \mathcal B \hbox{ is compact }
\end{equation}
 for all $k,l\in \Gamma,$ where  $\phi_{k,l}(\lambda):=\phi_k
(\lambda e_l),\; \lambda\in \D.$ Also,
 \begin{equation} \label{c3} \lim_{k\in \Gamma} \sup_{z\in B_E} \frac{(1-\|z\|^2)|\mathcal R\phi_k(z)|}{1-|\phi_k(z)|^2}=0.
\end{equation}
In particular, $\lim_{k\in \Gamma} \|\phi_k\|_{\mathcal B(B_E)}=0.  \text{  And further,  }$
\begin{equation} \label{c3'} \lim_{|\phi_n(z)|\to 1}  \frac{(1-\|z\|^2)|\mathcal R\phi_n(z)|}{1-|\phi_n(z)|^2}=0 \quad n\in \Gamma,
\end{equation}

\end{prop}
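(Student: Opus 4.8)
The plan is to prove the three assertions one after another, by the method of Theorem \ref{sufi1}: (\ref{c4}) by factoring $C_\phi$ through bounded ``slicing'' operators, and (\ref{c3})--(\ref{c3'}) by feeding suitable bounded nets of monomials into Lemma \ref{lema3}. For (\ref{c4}), fix $k,l\in\Gamma$, write $\pi_k(x)=\langle x,e_k\rangle$ and $\iota_l(\lambda)=\lambda e_l$, and introduce
$$J_k\colon \mathcal B\longrightarrow\B,\quad (J_kg)(x)=g(\pi_k(x)),\qquad\qquad R_l\colon\B\longrightarrow\mathcal B,\quad (R_lh)(\lambda)=h(\iota_l(\lambda)).$$
Both are analytic by composition and bounded: since $|\pi_k(x)|\le\|x\|$ we get $(1-\|x\|^2)\|(J_kg)'(x)\|\le(1-|\pi_k(x)|^2)\,|g'(\pi_k(x))|\le\|g\|_{\mathcal B}$, and since $\|\iota_l(\lambda)\|=|\lambda|$ we get $(1-|\lambda|^2)\,|(R_lh)'(\lambda)|\le(1-\|\iota_l(\lambda)\|^2)\,\|h'(\iota_l(\lambda))\|\le\|h\|_{\B}$; the values at $0$ are preserved, so the full Bloch norms are controlled. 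Since $\phi_{k,l}=\pi_k\circ\phi\circ\iota_l$, a direct check gives $R_l\circ C_\phi\circ J_k=C_{\phi_{k,l}}$, and hence $C_{\phi_{k,l}}$ is compact because $C_\phi$ is.

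For (\ref{c3}) and (\ref{c3'}) the basic tool is, for $k\in\Gamma$ and $m\in\N$, the function $f_{k,m}(x)=\langle x,e_k\rangle^m\in H^\infty(B_E)$; since $\|f_{k,m}\|_\infty=1$ and $f_{k,m}(0)=0$, inequality (\ref{desig hinf}) shows that $(f_{k,m})$ is bounded in $\B$. The chain rule, together with $\langle\mathcal R\phi(z),e_k\rangle=\mathcal R\phi_k(z)$, gives $\mathcal R C_\phi(f_{k,m})(z)=m\,\phi_k(z)^{m-1}\mathcal R\phi_k(z)$, whence
$$\|C_\phi(f_{k,m})\|_{rad}=\sup_{z\in B_E}(1-\|z\|^2)\,m\,|\phi_k(z)|^{m-1}\,|\mathcal R\phi_k(z)|.$$
To prove (\ref{c3'}) for a fixed $n$ I would repeat the argument for (\ref{c2}) in Theorem \ref{sufi1} with $\phi$ replaced by $\phi_n$ and the unit vectors $\xi_k$ there replaced by the constant vector $e_n$: if (\ref{c3'}) failed, there would be $\e>0$ and $z_j\in B_E$ with $|\phi_n(z_j)|\to1$ and $\frac{(1-\|z_j\|^2)|\mathcal R\phi_n(z_j)|}{1-|\phi_n(z_j)|^2}\ge\e$; then $m_j=\lfloor(1-|\phi_n(z_j)|)^{-1}\rfloor\to\infty$, so $f_{n,m_j}\to0$ in the compact-open topology, and Lemma \ref{lema3} with $\|\cdot\|_{rad}\le\|\cdot\|_{\B}$ forces $\|C_\phi(f_{n,m_j})\|_{rad}\to0$; but evaluating the supremum above at $z_j$ and using $m_j(1-|\phi_n(z_j)|^2)\to2$ and $|\phi_n(z_j)|^{m_j-1}\to e^{-1}$ leaves a quantity bounded below by $2\e/e-o(1)$, a contradiction.

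For (\ref{c3}) the scheme is the same, but now $k$ must be allowed to run over $\Gamma$. If (\ref{c3}) failed, I would pick $\e>0$, pairwise distinct $k_j\in\Gamma$ and $z_j\in B_E$ with $\frac{(1-\|z_j\|^2)|\mathcal R\phi_{k_j}(z_j)|}{1-|\phi_{k_j}(z_j)|^2}\ge\e$, and pass to a subsequence along which $|\phi_{k_j}(z_j)|\to L\in[0,1]$. If $L<1$, take $m_j=1$: the displayed identity and $1-|\phi_{k_j}(z_j)|^2\ge\delta$ (for a suitable $\delta>0$) give $\|C_\phi(f_{k_j,1})\|_{rad}\ge(1-\|z_j\|^2)|\mathcal R\phi_{k_j}(z_j)|\ge\e\delta$. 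If $L=1$, take $m_j=\lfloor(1-|\phi_{k_j}(z_j)|)^{-1}\rfloor\to\infty$ and argue exactly as for (\ref{c3'}), getting $\|C_\phi(f_{k_j,m_j})\|_{rad}\ge2\e/e-o(1)$. In both cases $\{f_{k_j,m_j}\}$ is bounded in $\B$, so Lemma \ref{lema3} forces $\|C_\phi(f_{k_j,m_j})\|_{rad}\to0$, a contradiction. Finally, the ``in particular'' claim follows from (\ref{c3}) because $1-|\phi_k(z)|^2\le1$ yields $\|\phi_k\|_{rad}\le\sup_{z\in B_E}\frac{(1-\|z\|^2)|\mathcal R\phi_k(z)|}{1-|\phi_k(z)|^2}$, and $\|\cdot\|_{\B}$ is equivalent to $\|\cdot\|_{rad}$.

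The one genuinely non-routine point — vacuous when $\Gamma$ is finite, which is why it does not appear classically — is the main obstacle: verifying that $f_{k_j,m_j}\to0$ uniformly on every compact $K\subset B_E$ (so that Lemma \ref{lema3} applies). Since $K$ is norm-bounded away from the unit sphere and $m_j\ge1$, it suffices to show $\sup_{x\in K}|\langle x,e_{k_j}\rangle|\to0$; if this failed, one would obtain (after a further subsequence) points $x_j\to x_0\in K$ with $|\langle x_j,e_{k_j}\rangle|\ge\e'$, hence $|\langle x_0,e_{k_j}\rangle|\ge\e'-\|x_j-x_0\|\to\e'>0$, contradicting $\langle x_0,e_{k_j}\rangle\to0$, which holds by Bessel's inequality applied to the distinct indices $k_j$.
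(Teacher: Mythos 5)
Your proof is correct. For (\ref{c4}) you take essentially the paper's route: the authors also factor $C_{\phi_{k,l}}=R_{e_l}\circ C_\phi\circ E_{e_k}$ through the same two bounded slicing operators (your $J_k$ is their $E_{e_k}$, your $R_l$ their $R_{e_l}$). For (\ref{c3}) and (\ref{c3'}) you genuinely diverge: you test $C_\phi$ against the monomials $\langle x,e_k\rangle^m$, exactly as the paper does in Theorem \ref{sufi1} for (\ref{c1})--(\ref{c2}), whereas the paper's own proof of this proposition uses logarithmic test functions --- for (\ref{c3}) the functions $f_k(z)=\log\frac{1}{1-\langle z,\xi_k\rangle}$ attached to a weakly null net, yielding the uniform statement (\ref{eq0}) before specializing $\xi_k=e_{n_k}\overline{\phi_{n_k}(z_k)}$, and for (\ref{c3'}) the difference $F_l\circ\varphi_n-F_0\circ\varphi_n$ of two logarithms whose singularities converge to a boundary point. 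Your version buys two things. First, your case split on $L=\lim_j|\phi_{k_j}(z_j)|$ is cleaner than the paper's treatment of (\ref{c3}): their specialization introduces the factor $|\phi_{n_k}(z_k)|$ into the numerator, which only contradicts (\ref{eq2'}) when that factor is bounded below, and your $m_j=1$ case (equivalently, taking $\xi_k=e_{n_k}$ in their notation) is precisely the argument needed to cover $L<1$. Second, you isolate and prove directly the one genuinely infinite-dimensional point --- that $\sup_{x\in K}|\langle x,e_{k_j}\rangle|\to 0$ on norm-compact $K$ for distinct indices, i.e.\ uniform convergence of a weakly null orthonormal sequence on compacta --- which the paper absorbs into a Montel argument for the logarithms. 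What the logarithmic route buys in exchange is the stronger intermediate inequality (\ref{eq0}), uniform over all weakly null nets. Two small points to add: for the ``in particular'' claim you should also record $|\phi_k(0)|\to 0$, immediate from $\sum_k|\phi_k(0)|^2=\|\phi(0)\|^2<1$, since the Bloch norm carries the term $|f(0)|$; and if the supremum in (\ref{c3}) is not attained you should start from some $\e'<\e$ when choosing the points $z_j$, a triviality.
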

\begin{proof} Let $y\in E\setminus \{0\}$ and $\|\xi\|\le 1.$ We write $F^\xi(x)=F(\langle x,\bar \xi\rangle),\;x\in B_E,$ for each $F\in \mathcal H(\D),$ and $f_y(\lambda)= f(\lambda \frac{y}{\|y\|}),\;\lambda\in \D,$ for each $f \in \mathcal H(B_E)$.

Consider $F\in \mathcal B$. Since
$\nabla F^\xi(x)=F'(\langle x,\bar \xi\rangle)\xi$ then  $F^\xi\in \mathcal B(B_E)$ and
$$(1-\|x\|^2)\|\nabla F^\xi(x)\|\le \|\xi\|\|F\|_{\mathcal B}\frac{1-\|x\|^2}{1- |\langle x, \bar\xi\rangle|^2}\le \|F\|_{\mathcal B}.$$ Hence the operator $E_\xi:F\in\mathcal{B}\mapsto F^\xi\in \mathcal{B}(B_E)$ is continuous.

If $f\in \mathcal B(B_E)$ and $\|y\|\le 1$ then it is an easy calculation that $f_y\in \mathcal B$ and $\|f_y\|_{\mathcal B}\le \|f\|_{\mathcal B(B_E)}.$ Hence the operator
$R_y:f\in\mathcal{B}(B_E)\mapsto f_y \in\mathcal{B}$ is continuous.
For each $y, \xi\in B_E$ and  $F\in \mathcal B$ we can write
$$ (C_\phi(F^\xi))_y(\lambda)= F^\xi\left(\phi \left(\lambda\frac{y}{\|y\|}\right)\right)= F\left(\left\langle\phi\big(\lambda\frac{y}{\|y\|}\big),\bar\xi\right\rangle\right)= C_{\phi_{y,\xi}}(F)(\lambda).$$
\noindent So $C_{\phi_{y,\xi}}=R_y\circ C_\phi \circ E_\xi$
 is compact.
Then (\ref{c4}) follows
because $\phi_{k,l}=\phi_{e_k,e_l}$.
\smallskip

Let us now show  (\ref{c3}).
  Given a weakly null net $(\xi_k)_{k\in \kappa}\in E$ with $\|\xi_k\|\le 1,$  we consider $f_k(z)= \log\big(\frac{1}{1-\langle z,\xi_k\rangle }\big).$  According to \cite[Corollary 4.4]{BGM}, $f_k\in \mathcal B(B_E)$ and
$\|f_k\|_{\mathcal B(B_E)}\le \|\log(\frac{1}{1-\lambda})\|_\mathcal{B}.$ Thus, the net $\{f_k:k\in \kappa\}$ is  bounded on compact subsets in $B_E,$ hence a $co$-relatively compact set by Montel's theorem. Since $\lim_{k\in \kappa}f_k(z)=0,$ it follows that  $\{f_k:k\in \kappa\}$ converges to zero uniformly on compact sets of $B_E$. Hence $\lim_{k\in \kappa}\|C_\phi(f_k)\|_{\mathcal B(B_E)}=0.$
Now notice that $\mathcal R(C_\phi(f_k))(z)= \frac{\langle\mathcal R \phi(z),\xi_k\rangle}{1-\langle\phi(z),\xi_k\rangle}.$
Therefore \begin{equation}\label{eq0}
\lim_{k\in \kappa}\sup_{\|z\|< 1}  \frac{(1-\|z\|^2)|\langle\mathcal R \phi(z),\xi_k\rangle |}{|1-\langle\phi(z),\xi_k\rangle|}=0.
\end{equation}

Assume now that (\ref{c3}) does not hold. Then there exist $\e>0$,  and a subnet $(n_k)$ such that for every $n_k$ there is $z_k$ with
\begin{equation}\label{eq2'}\frac{(1-\|z_k\|^2)|\mathcal R\phi_{n_k}(z_k)|}{1-|\phi_{n_k}(z_k)|^2}\ge \e.\end{equation}

Selecting now $\xi_k=e_{n_k} \overline{\phi_{n_k}(z_k)},$ we get  a weakly null net for which thus (\ref{eq0}) holds. Then
 $$\sup_{\|z\|< 1}  \frac{(1-\|z\|^2)|\mathcal R \phi_{n_k}(z)\|\phi_{n_k}(z_k) |}{|1-\phi_{n_k}(z)\overline{\phi_{n_k}(z_k)}|}\to 0, \quad k\to \infty,$$
that contradicts (\ref{eq2'}).

Finally, we prove (\ref{c3'}).
Let $n\in \Gamma$ and assume that (\ref{c3'}) does not hold, that is  there is $\e>0$ and a sequence $(z_l)$ with $\lim_{l\to\infty}|\phi_n(z_l)|= 1$  and \begin{equation}\label{eq1}\frac{(1-\|z_l\|^2)|\mathcal R\phi_n(z_l)|}{1-|\phi_n(z_l)|^2}\ge \e.\end{equation}

Let $F_l(\lambda)=\log\frac{1}{1-\lambda \overline{\varphi_n(z_l)}}$ and $g_l(x)=F_l(\langle x, e_n \rangle)=\log\frac{1}{1-\langle  x,e_n \rangle \overline{\varphi_n(z_l)}}.$

We may assume that $\varphi_n(z_l)$ converges to some $w_0,\;|w_0|=1.$ This means that $(g_l)$ $co$-converges to $g_0(x)=F_0(\langle x, e_n \rangle)=\log\frac{1}{1-\langle  x,e_n \rangle \overline{w_0}}$ where $F_0(\lambda)=\log\frac{1}{1-\lambda \overline{w_0}}.$
Next, notice that $C_\varphi(g_l)(x)=F_l\big( \langle  \varphi(x),e_n\rangle\big)=F_l\circ \varphi_n(x).$
\smallskip

 The compactness of $C_\varphi$ yields that  $\lim_l \|C_\varphi(g_l)-C_\varphi(g_0)\|_{rad}=0.$ However, $$\|C_\varphi(g_l)-C_\varphi(g_0)\|_{rad}=\|F_l\circ \varphi_n- F_0\circ \varphi_n\|_{rad}=$$ $$\sup_{x\in B_E}(1-\|x\|^2)|\mathcal{R}(F_l\circ \varphi_n)(x)-\mathcal{R}(F_0\circ \varphi_n)(x)|=$$
 $$\sup_{x\in B_E}(1-\|x\|^2)||F'_l(\varphi_n(x))\mathcal{R}\varphi_n(x)-F'_0(\varphi_n(x))\mathcal{R}\varphi_n(x)| =$$ $$\sup_{x\in B_E}(1-\|x\|^2)|\mathcal{R}\varphi_n(x)| \big|F'_l(\varphi_n(x))-F'_0(\varphi_n(x))\big|=$$ $$\sup_{x\in B_E}(1-\|x\|^2)|\mathcal{R}\varphi_n(x)|\Big|\frac{\overline{\varphi_n(z_l)}}{1-\overline{\varphi_n(z_l)}\varphi_n(x)}
 -\frac{\overline{w_0}}{1-\overline{w_0}\varphi_n(x)}\Big| \ge $$

 $$(1-\|z_l\|^2)|\mathcal{R}\varphi_n(z_l)|\Big|
 \frac{\overline{\varphi_n(z_l)}}{1-\overline{\varphi_n(z_l)}\varphi_n(z_l)}
 -\frac{\overline{w_0}}{1-\overline{w_0}\varphi_n(z_l)}\Big|=$$ $$
 \frac{(1-\|z_l\|^2)}{1-|\varphi_n(z_l)|^2}|\mathcal{R}\varphi_n(z_l)
 \Big|\frac{\overline{\varphi_n(z_l)}-\overline{w_0}}
 {1-\varphi_n(z_l)\overline{w_0}}\Big|\ge\e. $$
A contradiction.
\end{proof}

%
%


\subsubsection{Compactness criteria}

\begin{lema} \label{lema tec}
 Let $f:B_E\to \C$ be analytic and $x\in B_E$. Then
%
\begin{equation}\label{eq10}
(1-\|x\|^2)\mathcal Rf(x)=\frac{-1}{2\pi i }\int_{|\xi|=1} f(\varphi_x(\xi x))\frac{d\xi}{\xi^2}\Big .
\end{equation}
\end{lema}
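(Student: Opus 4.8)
The plan is to reduce this identity to reading off the first Taylor coefficient of a suitable one-variable analytic function. First I would dispose of the trivial case $x=0$: there $\mathcal{R}f(0)=f'(0)(0)=0$, while $\xi\mapsto\varphi_0(\xi\cdot 0)=\varphi_0(0)$ is constant, so $\int_{|\xi|=1}f(\varphi_0(0))\,\xi^{-2}\,d\xi=0$ and both sides vanish. From now on assume $x\neq 0$.

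Since $\|\xi x\|\le\|x\|<1$ whenever $|\xi|\le 1$, the point $\xi x$ lies in $B_E$ and the function $g(\xi):=f(\varphi_x(\xi x))$ is well defined and analytic on the disk $\{|\xi|<1/\|x\|\}$, which strictly contains $\overline{\mathbb{D}}$. Writing $g(\xi)=\sum_{n\ge 0}c_n\xi^n$ and using the Cauchy formula for the coefficients (legitimate on $|\xi|=1$ precisely because $g$ is analytic past the closed disk), one gets $\frac{1}{2\pi i}\int_{|\xi|=1}g(\xi)\,\xi^{-2}\,d\xi=c_1=g'(0)$. Thus the right-hand side of $(\ref{eq10})$ equals $-g'(0)$, and it suffices to prove $g'(0)=-(1-\|x\|^2)\,\mathcal{R}f(x)$.

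Next I would compute $g'(0)$ by the chain rule: since $\xi\mapsto\xi x$ has constant derivative $x$ and $\varphi_x(0)=x$, we have $g'(0)=(f\circ\varphi_x)'(0)(x)=f'(\varphi_x(0))\big(\varphi_x'(0)(x)\big)=f'(x)\big(\varphi_x'(0)(x)\big)$. Now plug in $(\ref{eq7})$, namely $\varphi_x'(0)=-s_x^2 P_x-s_x Q_x$ with $s_x=\sqrt{1-\|x\|^2}$. Because $P_x=\frac{1}{\|x\|^2}\,x\otimes x$ satisfies $P_x(x)=x$ (hence $Q_x(x)=x-P_x(x)=0$), we obtain $\varphi_x'(0)(x)=-s_x^2 x=-(1-\|x\|^2)x$, and therefore $g'(0)=f'(x)\big(-(1-\|x\|^2)x\big)=-(1-\|x\|^2)f'(x)(x)=-(1-\|x\|^2)\mathcal{R}f(x)$. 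Substituting this into the integral identity finishes the argument.

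There is no genuinely hard step here; the two points deserving a word of care are the analyticity of $g$ on a disk strictly larger than $\overline{\mathbb{D}}$ (so that the Cauchy coefficient formula applies on the unit circle) and the elementary identities $P_x(x)=x$, $Q_x(x)=0$ for the rank-one orthogonal projection $P_x$, which collapse $\varphi_x'(0)$ acting on $x$ into the scalar multiple $-(1-\|x\|^2)$. Everything else is standard Cauchy-coefficient and chain-rule bookkeeping.
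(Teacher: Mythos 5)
Your proof is correct, and it is genuinely more direct than the paper's. You compute the integral as the first Taylor coefficient of $g(\xi)=f(\varphi_x(\xi x))$ and then evaluate $g'(0)=(f\circ\varphi_x)'(0)(x)=f'(x)\bigl(\varphi_x'(0)(x)\bigr)$, using (\ref{eq7}) together with $P_x(x)=x$, $Q_x(x)=0$ to get $\varphi_x'(0)(x)=-(1-\|x\|^2)x$; this collapses the whole identity into one chain-rule computation. The paper instead starts from $f=(f\circ\varphi_x)\circ\varphi_x$ and the formula (\ref{eq8}) for $\varphi_x'(x)$, and works out a general identity for $\langle y,\overline{\nabla f(x)}\rangle$ in terms of the invariant gradient (equations (\ref{eq11}) and (\ref{eq9})), only specializing to $y=x$ at the very end. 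For the lemma as stated your route is shorter and cleaner; the paper's longer detour is not wasted, however, because the intermediate formula (\ref{eq9}) with general $y$ is reused in the proof of Theorem \ref{main} (where it is applied with $y=\mathcal R\phi(z)/(2\|\mathcal R\phi(z)\|)$), and the byproduct (\ref{eq12}) is recorded as being of independent interest. Your separate treatment of $x=0$ and the remark that $g$ is analytic on $|\xi|<1/\|x\|\supset\overline{\mathbb D}$ are both appropriate points of care; the Cauchy coefficient step is exactly the same device the paper uses when it writes $\langle x,\overline{\widetilde\nabla f(x)}\rangle$ as a contour integral.
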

\begin{proof}
 Observe that since $\varphi_x$ is self-inverse, $f=(f\circ \varphi_x)\circ \varphi_x,$  hence for
 $y\in B_E$
 \begin{eqnarray*}\langle y, \overline{\nabla f(x)}\rangle &=&f'(x)(y)=(f\circ \varphi_x)'(0)\circ (\varphi_x)'(x)(y)\\&=&
 (f\circ \varphi_x)'(0)\big[(-\frac{1}{s_x^2}P_x-\frac{1}{s_x}Q_x)(y)\big]\\
 &=&-\frac{1}{s_x^2}(f\circ \varphi_x)'(0)[P_x(y)]-\frac{1}{s_x}(f\circ \varphi_x)'(0)[Q_x(y)]\\&=&
 -\frac{1}{s_x^2}\langle P_x(y),\overline{\widetilde \nabla f(x)}\rangle  -\frac{1}{s_x}\langle Q_x(y), \overline{\widetilde \nabla f(x)}\rangle\\
 &=&
 -\frac{1}{s_x^2}\langle P_x(y),\overline{\widetilde \nabla f(x)}\rangle -\frac{1}{s_x}\langle y- P_x(y), \overline{\widetilde \nabla f(x)}\rangle.\end{eqnarray*}
\noindent and using that $P_x$ is self-adjoint,
\begin{eqnarray*}\langle y, \overline{\nabla f(x)}\rangle &=&-\frac{1}{s_x^2}\langle y,P_x\big(\overline{\widetilde \nabla f(x)}\big)\rangle -\frac{1}{s_x}\langle y, \overline{\widetilde \nabla f(x)}\rangle +\frac{1}{s_x}\langle y, P_x\Big(\overline{\widetilde \nabla f(x)}\Big)\rangle\\
&=& (-\frac{1}{s_x^2}+\frac{1}{s_x})\langle y, \frac{\langle \overline{\widetilde \nabla f(x)},x\rangle }{\|x\|^2} x\rangle -\frac{1}{s_x}\langle y, \overline{\widetilde \nabla f(x)}\rangle\\
 &=&(-\frac{1}{s_x^2}+\frac{1}{s_x})\frac{\langle x,\overline{\widetilde \nabla f(x)}\rangle }{\|x\|^2}\langle y,x\rangle -\frac{1}{s_x}\langle y, \overline{\widetilde \nabla f(x)}\rangle .\end{eqnarray*}
\begin{equation} \text { Therefore, }~~\label{eq11} s_x^2\langle y, \overline{\nabla f(x)}\rangle =(s_x-1)\frac{\langle x,\overline{\widetilde \nabla f(x)}\rangle }{\|x\|^2}\langle y,x\rangle -s_x\langle y, \overline{\widetilde \nabla f(x)}\rangle .\end{equation}

By the Cauchy formula we have that $$\langle x,\overline{\widetilde \nabla f(x)}\rangle= (f\circ \varphi_x)'(0)(x)=\frac{1}{2\pi i}\int_{|\xi|=1}f\circ \varphi_x(\xi x)\frac{d\xi}{\xi^2}~~~~ \text { and } $$
$$\langle y,\overline{\widetilde \nabla f(x)}\rangle= (f\circ \varphi_x)'(0)(y)=\frac{1}{2\pi i}\int_{|\xi|=1}f\circ \varphi_x(\xi y)\frac{d\xi}{\xi^2}.$$

Thus equality (\ref{eq11}) becomes
\begin{eqnarray}
& \lefteqn{s_x^2\langle y, \overline{\nabla f(x)}\rangle=} \nonumber \\&
(s_x-1)\frac{1}{2\pi i}\int_{|\xi|=1}f\circ \varphi_x(\xi x)\frac{d\xi}{\xi^2}\frac{\langle y,x\rangle }{\|x\|^2}-s_x\frac{1}{2\pi i}\int_{|\xi|=1}f\circ \varphi_x(\xi y)\frac{d\xi}{\xi^2} \label{eq9}
\end{eqnarray}
\noindent and we conclude by taking $y=x$.
\end{proof}
\begin{rem} From (\ref{eq11}) we deduce the following identity that might be of independent interest \begin{equation} \label{eq12} s_x^2\nabla f(x)+ s_x\widetilde \nabla f(x)=(s_x-1)\frac{\langle \widetilde \nabla f(x), \bar x\rangle }{\|x\|^2} \bar x .\end{equation} \end{rem}
\begin{lema} \label{lemafinal} For every $0 <\delta<1,$ there exists $C_\delta>0$ such that
\be
|\langle y,\overline{\nabla f(x)}\rangle - \langle y', \overline{\nabla  f(x')}\rangle|\le C_\delta \|f\|_{\mathcal B}\big(\|x- x'\|+\frac{1-\delta}{2}\|y-y'\|\big)
\end{equation}
whenever $x,x'\in \delta B_E$ and $\|y\|\le 1,\|y'\|\le 1,$ and $f\in \mathcal B(B_E).$
\end{lema}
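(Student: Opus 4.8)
The plan is to split the difference using the identity $f'(x)(y)=\langle y,\overline{\nabla f(x)}\rangle$ (and additivity of $\langle\cdot,\cdot\rangle$ in each slot), writing
$$\langle y,\overline{\nabla f(x)}\rangle-\langle y',\overline{\nabla f(x')}\rangle = f'(x)(y-y')+\big(f'(x)(y')-f'(x')(y')\big),$$
and to estimate the two summands separately. The first one is handled directly by the definition of the Bloch seminorm: since $\|f'(z)\|\le \|f\|_{\mathcal B}/(1-\|z\|^2)$ and $\|x\|<\delta$,
$$|f'(x)(y-y')|\le \frac{\|f\|_{\mathcal B}}{1-\|x\|^2}\|y-y'\|\le \frac{\|f\|_{\mathcal B}}{1-\delta^2}\|y-y'\|,$$
and since $\tfrac{1}{1-\delta^2}=\tfrac{2}{(1-\delta)^2(1+\delta)}\cdot\tfrac{1-\delta}{2}$ this already has the required form once $C_\delta\ge \tfrac{2}{(1-\delta)^2(1+\delta)}$.

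For the second summand, let $\gamma(t)=(1-t)x+tx'$, which stays in $\delta B_E$ by convexity of the ball, and apply the fundamental theorem of calculus to $t\mapsto f'(\gamma(t))(y')$, obtaining
$$f'(x')(y')-f'(x)(y')=\int_0^1 f''(\gamma(t))(x'-x,y')\,dt,$$
so that $|f'(x)(y')-f'(x')(y')|\le \|x-x'\|\,\|y'\|\sup_{w\in\delta B_E}\|f''(w)\|\le \|x-x'\|\sup_{w\in\delta B_E}\|f''(w)\|$, where $\|f''(w)\|$ is the norm of $f''(w)$ as a bilinear form. It remains to bound $\sup_{w\in\delta B_E}\|f''(w)\|$ by a multiple of $\|f\|_{\mathcal B}$, and this is the only delicate point: precisely because in infinitely many variables local uniform convergence of Bloch functions does not control their derivatives (the obstruction noted in the introduction), one cannot cite a soft compactness argument and must produce an explicit Cauchy estimate. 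Fix $w\in\delta B_E$ and unit vectors $u,v\in E$; the function $h(\lambda)=f'(w+\lambda u)(v)$ is holomorphic on $|\lambda|<1-\|w\|$ (it is the $\partial_\mu$-derivative at $0$ of the two-variable holomorphic function $f(w+\lambda u+\mu v)$), satisfies $|h(\lambda)|\le \|f'(w+\lambda u)\|\le \|f\|_{\mathcal B}/(1-(\|w\|+|\lambda|)^2)$, and $f''(w)(u,v)=h'(0)$. Choosing $r=\tfrac{1-\delta}{2}$ (admissible since $\|w\|+r<\tfrac{1+\delta}{2}<1$) and using the one-variable Cauchy estimate on $|\lambda|=r$ gives
$$|f''(w)(u,v)|=|h'(0)|\le \frac{1}{r}\cdot\frac{\|f\|_{\mathcal B}}{1-\big(\tfrac{1+\delta}{2}\big)^2}=\frac{8\,\|f\|_{\mathcal B}}{(1-\delta)^2(3+\delta)},$$
hence $\sup_{w\in\delta B_E}\|f''(w)\|\le \tfrac{8}{(1-\delta)^2(3+\delta)}\|f\|_{\mathcal B}$.

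Combining the three estimates with $C_\delta=\max\big\{\tfrac{8}{(1-\delta)^2(3+\delta)},\ \tfrac{2}{(1-\delta)^2(1+\delta)}\big\}$ yields the inequality. The main obstacle, as indicated, is that the bound on $\|f''\|$ on $\delta B_E$ must be obtained by an explicit Cauchy argument rather than by compactness; everything else (holomorphy of the auxiliary function $h$, the fact that the segment $\gamma$ stays inside $\delta B_E$, and the bookkeeping of constants) is routine.
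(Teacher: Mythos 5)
Your proof is correct, and it takes a genuinely different route from the paper's. The paper represents the whole difference $\langle y,\overline{\nabla f(x)}\rangle-\langle y',\overline{\nabla f(x')}\rangle$ as a single Cauchy integral $\frac{1}{\e}\frac{1}{2\pi i}\int_{|\xi|=1}\big(f(x+\e\xi y)-f(x'+\e\xi y')\big)\frac{d\xi}{\xi^2}$ with $\e=\frac{1-\delta}{2}$, and then controls the integrand by the hyperbolic Lipschitz estimate $|f(u)-f(v)|\le\|f\|_{inv}\,\beta_E(u,v)$ (Theorem \ref{teo met}) together with the comparisons $\beta_E\le C'_\delta\rho_E$ on the region in question and $\rho_E(u,v)\le\|u-v\|/|1-\langle u,v\rangle|$; both increments in $x$ and in $y$ are absorbed at once. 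You instead split off $f'(x)(y-y')$, bound it by the raw Bloch seminorm, and treat the increment in $x$ by a mean-value argument requiring a uniform bound on $\|f''\|$ over $\delta B_E$, which you obtain by a one-variable Cauchy estimate on the disc of radius $\frac{1-\delta}{2}$. Your argument is more elementary and self-contained: it avoids Theorem \ref{teo met} and all the pseudo-hyperbolic metric machinery, and it correctly identifies (and explicitly resolves) the one point where a soft compactness argument would fail in infinite dimensions, namely the passage from a bound on $f$ (or $f'$) to a bound on $f''$. What the paper's approach buys in exchange is that it reuses the invariant-metric estimate already established, keeps the whole computation at the level of values of $f$ rather than second derivatives, and produces the factor $\frac{1-\delta}{2}$ in front of $\|y-y'\|$ automatically from the radius of the Cauchy circle; the constants are comparable in both arguments. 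All the steps you flag as routine (holomorphy of $h$, convexity of $\delta B_E$, the bookkeeping) are indeed routine, and your explicit constants check out.
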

\begin{proof}
Let $\e=\frac{1-\delta}{2}.$ Since $ \max \{\|x+\e\xi y\|, \|x'+\e\xi y'\|: |\xi|=1\}\le \frac{1+\delta}{ 2},$
we conclude by taking $u=0$ in (\ref{eq4}) that
$$\rho_E(x+\e\xi y, x'+\e\xi y')\leq \frac{4(1+\delta)}{ 4+(1+\delta)^2}<1.$$
Since $\frac{1}{2} \log \frac{1+ r}{1-r} \le \frac{r}{1 -r}$ for all $0 < r <1,$ we have
$$\beta_E(x+\e\xi y, x'+\e\xi y')\le \frac{\rho_E(x+\e\xi y, x'+\e\xi y')}{ 1 - \frac{4(1+\delta)}{ 4+(1+\delta)^2}},$$
so  it follows that for some constant $C'_\delta$ depending only on $\delta,$
$$\beta_E(x+\e\xi y, x'+\e\xi y')\le C'_\delta \rho_E(x+\e\xi y, x'+\e\xi y').$$

Next, using Cauchy formula we have for  $x,x'\in \delta B_E$, $\|y\|\le 1,\|y'\|\le 1,$ \ba &&\langle y,\overline{\nabla f(x)}\rangle - \langle y', \overline{\nabla  f(x')}\rangle
=\frac{1}{\varepsilon}\frac{1}{2\pi i}\int_{|\xi|=1}f(x+\e\xi y)-f(x'+\e\xi y')\frac{d\xi}{\xi^2}.\ea

From this, Theorem \ref{teo met} and the equivalence of the semi-norms, we get that for some constant $C>0$
\ba
|\langle y,\overline{\nabla f(x)}\rangle - \langle y', \overline{\nabla  f(x')}\rangle|&\le&  \frac{1}{\varepsilon}\int_{0}^{2\pi}\Big|f(x+\e e^{it} y)-f(x'+\e e^{it} y')\Big|\frac{dt}{2\pi} \\
&\le&\frac{1}{\varepsilon} C\|f\|_{\B}\int_{0}^{2\pi}\beta_E(x+\e e^{it} y, x'+\e e^{it} y') \frac{dt}{2\pi}.\ea

 Applying (\ref{eq5}) we find a constant $C_\delta>0$ depending only on $\delta$ such that
\ba
|\langle y,\overline{\nabla f(x)}\rangle - \langle y', \overline{\nabla  f(x')}\rangle|\le
\frac{1}{\varepsilon}C\cdot C'_\delta\|f\|_{\B}\int_{0}^{2\pi}\rho_E(x+\e e^{it} y,x'+\e e^{it} y')\frac{dt}{2\pi}\\
\le  C_\delta\|f\|_{\B}\int_{0}^{2\pi}\|(x+\e e^{it} y)- (x'+\e e^{it} y')\|\frac{dt}{2\pi} \\\le
 C_\delta\|f\|_{\B}(\|x- x'\| +\e\|y-y'\|)= C_\delta\|f\|_{\B}\big(\|x- x'\| +\frac{1-\delta}{2}\|y-y'\|\big).\ea
\end{proof}
\begin{teo} \label{main} Let $\phi:B_E\to B_E$ be analytic. Assume that
\begin{flalign*}
(i) \quad \{\phi(z): \|\phi(z)\|\le \delta\} \mbox{ and } \{(1-\|z\|^2)\mathcal R\phi(z):\|\phi(z)\|\le \delta\} \mbox{ are relatively compact for all } 0<\delta<1,& &
\end{flalign*}
\begin{flalign*}
(ii) \quad \lim_{\|\phi(z)\|\to 1}\frac{(1- \|z\|^2) \|\mathcal R \phi(z)\|}{\sqrt{1- \|\phi(z)\|^2}}=0 &\;\mbox{ and} &  &
\end{flalign*}

\begin{flalign*}
(iii) \quad \lim_{\|\phi(z)\|\to 1}\frac{(1- \|z\|^2) |\langle  \phi(z), {\mathcal R\phi(z)}\rangle|}{1- \|\phi(z)\|^2}=0. & &
\end{flalign*}
Then $C_\phi:\mathcal B(B_E)\to \mathcal B(B_E)$ is a compact operator.
\end{teo}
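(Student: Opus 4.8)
The plan is to verify the criterion of Lemma~\ref{lema3}: I would take an arbitrary bounded net $(f_\alpha)$ in $\mathcal B(B_E)$ with $f_\alpha\to 0$ in $(\mathcal B(B_E),co)$ and show that $\|C_\phi(f_\alpha)\|_{\mathcal B}\to 0$; by the seminorm equivalences (\ref{eq21}) this is the same as proving $\sup_{z\in B_E}(1-\|z\|^2)|\mathcal R(f_\alpha\circ\phi)(z)|\to 0$. Writing $M_0=\sup_\alpha\|f_\alpha\|_{\mathcal B}<\infty$ and fixing $\varepsilon>0$, I would split $B_E$ into the ``boundary'' set $\{\|\phi(z)\|>\delta\}$ and the ``interior'' set $\{\|\phi(z)\|\le\delta\}$ for a suitable $\delta\in(0,1)$. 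On the boundary set I would repeat the computation of the second proof of Theorem~\ref{cont}: from $\mathcal R(f\circ\phi)(z)=\langle\nabla f(\phi(z)),\overline{\mathcal R\phi(z)}\rangle$, Lemma~\ref{lema2} and Lemma~\ref{lema1} one gets
$$(1-\|z\|^2)^2|\mathcal R(f_\alpha\circ\phi)(z)|^2\le\|f_\alpha\|_{inv}^2\,\bigl(A(z)^2+B(z)^2\bigr),$$
with $A(z)=\frac{(1-\|z\|^2)\|\mathcal R\phi(z)\|}{\sqrt{1-\|\phi(z)\|^2}}$ and $B(z)=\frac{(1-\|z\|^2)|\langle\mathcal R\phi(z),\phi(z)\rangle|}{1-\|\phi(z)\|^2}$, so hypotheses $(ii)$ and $(iii)$ let me pick $\delta$, \emph{independently of} $\alpha$, so that $(1-\|z\|^2)|\mathcal R(f_\alpha\circ\phi)(z)|\le CM_0\varepsilon$ whenever $\|\phi(z)\|>\delta$.

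For the interior set I would use $(i)$ together with Lemma~\ref{lemafinal}. Put $\delta'=(1+\delta)/2$ and $S=\{(\phi(z),(1-\|z\|^2)\mathcal R\phi(z)):\|\phi(z)\|\le\delta\}$; by $(i)$ and the bound $\|(1-\|z\|^2)\mathcal R\phi(z)\|\le 2$ from (\ref{sch3}), $S$ is a relatively compact, hence totally bounded, subset of $\delta'B_E\times\{v\in E:\|v\|\le 2\}$. Since $(1-\|z\|^2)\mathcal R(f_\alpha\circ\phi)(z)=\langle\nabla f_\alpha(\phi(z)),\overline{(1-\|z\|^2)\mathcal R\phi(z)}\rangle$, Lemma~\ref{lemafinal} (applied with $\delta'$, after rescaling the second coordinate into $B_E$) shows that $(x,v)\mapsto\langle\nabla f_\alpha(x),\bar v\rangle$ is Lipschitz on $\delta'B_E\times\{\|v\|\le 2\}$ with a constant $\le CM_0$ that does \emph{not} depend on $\alpha$. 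Choosing a finite $\varepsilon/(CM_0)$-net $\{(x_j,v_j)\}_{j=1}^N\subset S$, I would then get, for every $(x,v)\in S$,
$$|\langle\nabla f_\alpha(x),\bar v\rangle|\le\varepsilon+\max_{1\le j\le N}|\langle\nabla f_\alpha(x_j),\bar v_j\rangle|.$$

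The decisive point is that, although $co$-convergence of $(f_\alpha)$ does \emph{not} give $\nabla f_\alpha\to 0$ uniformly on compacta, it does give, for each \emph{fixed} $j$, that $\langle\nabla f_\alpha(x_j),\bar v_j\rangle=f_\alpha'(x_j)(v_j)=\frac{d}{d\lambda}\big|_{\lambda=0}f_\alpha(x_j+\lambda v_j)\to 0$ — just restrict $f_\alpha$ to the compact analytic disc $\{x_j+\lambda v_j:|\lambda|\le r\}\subset B_E$ and apply the one-variable Cauchy estimate. Since there are only finitely many $j$, there is $\alpha_0$ with $\max_j|\langle\nabla f_\alpha(x_j),\bar v_j\rangle|<\varepsilon$ for $\alpha\ge\alpha_0$, hence $\sup_{\|\phi(z)\|\le\delta}(1-\|z\|^2)|\mathcal R(f_\alpha\circ\phi)(z)|<2\varepsilon$ for such $\alpha$. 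Combined with the boundary estimate this gives $\|C_\phi(f_\alpha)\|_{\mathcal B}\le\max\{CM_0\varepsilon,2\varepsilon\}$ (up to the fixed equivalence constants) for $\alpha\ge\alpha_0$, and since $\varepsilon>0$ was arbitrary, $\|C_\phi(f_\alpha)\|_{\mathcal B}\to 0$; Lemma~\ref{lema3} then yields the compactness of $C_\phi$.

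I expect the main obstacle to be exactly this last splicing step — replacing the (false, in infinite dimensions) ``uniform smallness of $\nabla f_\alpha$ on compacta'' by something usable. It is overcome by the quantitative Lipschitz bound of Lemma~\ref{lemafinal}, whose uniformity in $\alpha$ hinges on $\sup_\alpha\|f_\alpha\|_{\mathcal B}<\infty$, together with the relative compactness hypotheses $(i)$, which reduce the whole interior estimate to the control of finitely many scalar first-order evaluations $f_\alpha'(x_j)(v_j)$, and those are precisely the quantities that $co$-convergence does handle.
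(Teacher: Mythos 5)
Your proof is correct and follows the paper's overall strategy: the same reduction via Lemma \ref{lema3}, the same boundary estimate on $\{\|\phi(z)\|>\delta\}$ obtained from Lemma \ref{lema2} together with hypotheses $(ii)$ and $(iii)$, and the same Ascoli-type argument on $A_\delta=\{z:\|\phi(z)\|\le\delta\}$ combining the uniform-in-$\alpha$ Lipschitz bound of Lemma \ref{lemafinal} with the finite net supplied by hypothesis $(i)$. The one place where you genuinely diverge is the pointwise step. The paper first proves $(1-\|z\|^2)\langle\nabla f_\alpha(\phi(z)),\overline{\mathcal R\phi(z)}\rangle\to 0$ at \emph{every} $z\in A_\delta$ by means of the M\"obius-transform integral representation (\ref{eq9}) from Lemma \ref{lema tec}, which expresses the directional derivative through values of $f_\alpha$ on the compact circles $\varphi_{\phi(z)}(\xi\phi(z))$ and $\varphi_{\phi(z)}\big(\xi\tfrac{\mathcal R\phi(z)}{2\|\mathcal R\phi(z)\|}\big)$, and only afterwards invokes the net. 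You instead prove the pointwise limit only at the finitely many net points $(x_j,v_j)$, via the elementary one-variable Cauchy formula $f_\alpha'(x_j)(v_j)=\frac{1}{2\pi i}\int_{|\lambda|=r}f_\alpha(x_j+\lambda v_j)\lambda^{-2}\,d\lambda$ on an affine disc with $r<(1-\delta')/2$ compactly contained in $B_E$; this bypasses Lemma \ref{lema tec} entirely and is, if anything, cleaner. The only care needed --- which you take --- is that $\|v_j\|\le 2$ by (\ref{sch3}), so the disc radius and the normalization $\|y\|\le 1$ in Lemma \ref{lemafinal} must be adjusted by a harmless rescaling. Both arguments ultimately rest on the same fact, that $co$-convergence controls scalar first derivatives along fixed compact one-dimensional circles, so your route is best described as a streamlining of the paper's proof rather than a different method.
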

\begin{proof} We are going to apply Lemma \ref{lema3}.
Let $(f_\alpha)$ be a bounded  net  in $\mathcal B(B_E)$  converging to zero uniformly on compact sets.
Recall that $$\mathcal R (f_\alpha\circ \phi)(z)=\langle \nabla f_\alpha(\phi(z)),\overline{\mathcal R\phi(z)}\rangle.$$

Let $\e>0$. By (ii) and (iii) there exists $\delta<1$ such that for $\|\phi(z)\|>\delta$ we have
$$(1-\|z\|^2)\frac{\sqrt{(1- \|\phi(z)\|^2)\|\mathcal R\phi(z)\|^2 + |\langle  \phi(z), {\mathcal R\phi(z)}\rangle|^2}}{1- \|\phi(z)\|^2} < \e$$
and hence  using Lemma \ref{lema2},  we have
\be \label{ineq1}
(1-\|z\|^2)|\mathcal R (f_\alpha\circ \phi)(z)|\le \sup_\alpha \|\widetilde \nabla f_\alpha\big(\phi(z)\big)\| \e\le \sup_\alpha \|f_\alpha\|_{inv}~\e.
\end{equation}
Denote $A_\delta=\{z\in B_E: \|\phi(z)\|\le \delta\}.$
For $z\in A_\delta$  we use formula (\ref{eq9}) obtained in the proof of Lemma \ref{lema tec} to have
\ba &&\langle \frac{\mathcal {R}\phi(z)}{2\|\mathcal R\phi(z)\|}, \overline{\nabla f(\phi(z))} \rangle=\\&=& \frac{1}{s_{\phi(z)}}\big(1-\frac{1}{s_{\phi(z)}}\big)\frac{1}{2\pi i}\int_{|\xi|=1}f\big(\varphi_{\phi(z)}(\xi \phi(z))\big)\frac{d\xi}{\xi^2}\frac{\langle \mathcal R\phi(z), \phi(z)\rangle}{{2\|\mathcal R\phi(z)\|}\|\phi(z)\|^2} \\
&-&  \frac{1}{s_{\phi(z)}}\frac{1}{2\pi i}\int_{|\xi|=1}f\big(\varphi_{\phi(z)}(\xi \frac{\mathcal R\phi(z)}{2\|\mathcal R\phi(z)\|})\big)\frac{d\xi}{\xi^2}.\ea

Hence for each $z\in A_\delta,$
\ba
&&(1-\|z\|^2)|\langle\nabla f_\alpha(\phi(z)), \overline{\mathcal R\phi(z)}\rangle |\le\\
& & \frac{(1-\|z\|^2)}{\|\phi(z)\|}\frac{1}{s_{\phi(z)}}(\frac{1}{s_{\phi(z)}}-1)\|\mathcal R\phi(z)\|
\int_{0}^{2\pi}|f_\alpha\big(\varphi_{\phi(z)}(e^{it} \phi(z))\big)|\frac{dt}{2\pi }\\
&+& \frac{2(1-\|z\|^2)}{s_{\phi(z)}}\|\mathcal R\phi(z)\|\int_{0}^{2\pi}\big|f_\alpha\big(\varphi_{\phi(z)}\big(e^{it}  \frac{\mathcal R\phi(z)}{2\|\mathcal R\phi(z)\|}\big)\big)\big|\frac{dt}{2\pi }.
\ea

 Bearing in mind (\ref{sch3}) in Lemma \ref{lema1} and that $\lim_{\epsilon\to 0} \frac{1}{\epsilon}(\frac{1}{\sqrt{1-\epsilon^2}}-1)= 0,$ there is $C>0$ such that for $\|\phi(z)\|\le \delta$ we have
$$\frac{(1-\|z\|^2)}{\|\phi(z)\|}\frac{1}{s_{\phi(z)}}(\frac{1}{s_{\phi(z)}}-1)\|\mathcal R\phi(z)\|
\le  \frac{2}{\|\phi(z)\|} \Big(\frac{1}{(1-\|\phi(z)\|^2)^{1/2}}-1\Big)
\le C.$$

In particular, for each $\delta<1$ there exists $C_\delta>0$ such that
$$(1-\|z\|^2)|\langle\nabla f_\alpha(\phi(z)), \overline{\mathcal R\phi(z)}\rangle |\le$$
$$C_\delta \Big(\int_{0}^{2\pi}|f_\alpha(\varphi_{\phi(z)}(e^{it} \phi(z)))\frac{dt}{2\pi }+
\int_{0}^{2\pi}|f_\alpha(\varphi_{\phi(z)}(e^{it} \frac{\mathcal R\phi(z)}{2\|\mathcal R\phi(z)\|}))|\frac{dt}{2\pi }\Big)$$
when  $\|\phi(z)\|\le \delta.$
Therefore, since $$\{ \varphi_{\phi(z)}(\xi \phi(z)):\xi\in \mathbb T \}\cup
\{ \varphi_{\phi(z)}(\xi  \frac{\mathcal R\phi(z)}{2\|\mathcal R\phi(z)\|}):\xi\in \mathbb T \}$$ is compact in $B_E$, we have for each $z\in A_\delta$ that
\be\label{final}
(1-\|z\|^2)|\langle\nabla f_\alpha(\phi(z)), \overline{\mathcal R\phi(z)}\rangle |\to 0.
\end{equation}

Now bearing in mind $(\ref{sl2})$ to observe that $s_y^2\|\mathcal R\phi(y)\|\le 1, $ we may use Lemma \ref{lemafinal} to have for each
$z, z'\in  A_\delta$
$$\Big|\langle\nabla f_\alpha(\phi(z)), s_z^2\overline{\mathcal R\phi(z)}\rangle-\langle\nabla f_\alpha(\phi(z')), s_{z'}^2\overline{\mathcal R\phi(z')}\rangle\Big|$$ $$ \le C_\delta \|f_\alpha\|_{\mathcal B}\Big(\|\phi(z)- \phi(z')\|+\frac{1-\delta}{2}(\|s_z^2\overline{\mathcal R\phi(z)}-s_{z'}^2\overline{\mathcal R\phi(z')}\|)\Big).$$

To finish the proof we use that  both $\phi(A_\delta)$  and  $\{(1-\|z\|^2)\mathcal R\phi(z):z\in A_\delta\}$ are relatively compact, thus also the set $\{\big(\varphi(z),(1-\|z\|^2)\mathcal R\phi(z)\big):z\in A_\delta\}\subset E\times E$ is relatively compact. So, given $\e>0$ there exists a finite family of points $\{z_k: 1\le k\le N\}\subset A_\delta$ such that
for each $z\in A_\delta$  there exists $z_k$ for which $\|\phi(z)-\phi(z_k)\|+\frac{1-\delta}{2}(\|s_z^2\overline{\mathcal R\phi(z)}-s_{z_k}^2\overline{\mathcal R\phi(z_k)}\|)<\e$.

$$\text{ Hence }~~~ \sup_{z\in A_\delta }|\langle \nabla f_\alpha(\phi(z)),s_{z}^2\overline{\mathcal R\phi(z)} \rangle|\le C'2\varepsilon + \max_{1\le k\le n} |\langle \nabla f_\alpha(\phi(z_k)), s_{z_k}^2\overline{\mathcal R\phi(z_k)} \rangle|.~~~~$$
The proof is then complete using (\ref{final}).
\end{proof}

\begin{cor} Assume that $\{\phi(z): \|\phi(z)\|\le \delta\}$ and $\{(1-\|z\|^2)\mathcal R\phi(z):\|\phi(z)\|\le \delta\}$ \mbox{ are relatively compact for all } $\delta<1$. Then $C_\phi:\mathcal B(B_E)\to \mathcal B(B_E)$ is a compact operator if and only if

$$(i) ~~~~~~~~~~\lim_{\|\phi(z)\|\to 1}\frac{(1- \|z\|^2) \|\mathcal R \phi(z)\|}{\sqrt{1- \|\phi(z)\|^2}}=0, \hbox{ and } $$

 $$(ii) ~~~~~~~~\lim_{\|\phi(z)\|\to 1}\frac{(1- \|z\|^2) |\langle  \phi(z),
 {\mathcal R\phi(z)}\rangle|}{1- \|\phi(z)\|^2}=0. $$
\end{cor}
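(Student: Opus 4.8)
The plan is to derive both implications directly from results already in hand, since this Corollary is just the conjunction of the necessity part of Theorem \ref{sufi1} with the sufficiency part of Theorem \ref{main}, read under the standing hypothesis that $\{\phi(z):\|\phi(z)\|\le\delta\}$ and $\{(1-\|z\|^2)\mathcal R\phi(z):\|\phi(z)\|\le\delta\}$ are relatively compact for every $0<\delta<1$.

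For the ``only if'' direction, I would assume $C_\phi$ is compact and invoke Theorem \ref{sufi1}, which requires no compactness of the sets above as a hypothesis and produces exactly conditions (\ref{c1}) and (\ref{c2}); these are word for word the conditions (i) and (ii) in the present statement, so the two limits vanish.

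For the ``if'' direction, I would simply check that the three hypotheses of Theorem \ref{main} are in force: its hypothesis (i) is precisely the standing assumption of the Corollary, while its hypotheses (ii) and (iii) are precisely the conditions (i) and (ii) assumed here. Theorem \ref{main} then yields that $C_\phi:\mathcal B(B_E)\to\mathcal B(B_E)$ is compact, closing the equivalence.

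All of the genuine analytic work — the Schwarz--Pick estimates of Lemma \ref{lema1}, the integral formula of Lemma \ref{lema tec}, the Lipschitz estimate of Lemma \ref{lemafinal}, and the Montel/net reduction of Lemma \ref{lema3} — has already been spent in proving Theorems \ref{sufi1} and \ref{main}, so there is no real obstacle here; the only point to watch is the bookkeeping, namely that the labels match up and that the necessity statement is used in the form that does \emph{not} presuppose relative compactness of $\{(1-\|z\|^2)\mathcal R\phi(z):\|\phi(z)\|\le\delta\}$, so that the asserted equivalence is genuine rather than circular.
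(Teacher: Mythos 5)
Your proposal is correct and is exactly how the paper intends the corollary to be read: the paper states it without proof immediately after Theorem \ref{main}, as the conjunction of the necessity conditions (\ref{c1}) and (\ref{c2}) from Theorem \ref{sufi1} with the sufficiency of Theorem \ref{main} under the standing relative-compactness hypothesis. Your bookkeeping remark about the necessity direction not presupposing the compact-set hypothesis is the right point to check, and it does check out.
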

\begin{cor} \label{t0} Assume  that $\|\phi\|_\infty<1.$
The composition operator $C_\phi$ is compact if $ \phi(B_E) $  is relatively compact.
 \end{cor}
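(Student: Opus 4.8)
The plan is to deduce the statement from Theorem \ref{main} by checking its three hypotheses for $\phi$. Write $r:=\|\phi\|_\infty<1$, so $\|\phi(z)\|\le r$ for every $z\in B_E$. Then hypotheses $(ii)$ and $(iii)$ of Theorem \ref{main} hold vacuously, since $\|\phi(z)\|$ never approaches $1$ (as already observed in the Remark following Theorem \ref{sufi1}). As for $(i)$: for each $0<\delta<1$ the set $\{\phi(z):\|\phi(z)\|\le\delta\}$ is contained in $\phi(B_E)$, hence relatively compact by hypothesis; and $\{(1-\|z\|^2)\mathcal R\phi(z):\|\phi(z)\|\le\delta\}$ is contained in $\{(1-\|z\|^2)\mathcal R\phi(z):z\in B_E\}$, so it suffices to show that this last set is relatively compact in $E$.

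To this end I would use the representation behind formula (\ref{eq10}), now in the vector-valued setting. Fix $z\in B_E\setminus\{0\}$; the map $\xi\mapsto \phi(\varphi_z(\xi z))$ is analytic on $\{|\xi|<1/\|z\|\}$, a neighbourhood of $\overline{\mathbb D}$, and since $\varphi_z(0)=z$ and $\varphi_z'(0)(z)=-s_z^2z$ by (\ref{eq7}), its derivative at $0$ equals $\phi'(z)(-s_z^2z)=-(1-\|z\|^2)\mathcal R\phi(z)$. Hence the vector-valued Cauchy integral formula gives
$$(1-\|z\|^2)\mathcal R\phi(z)=\frac{-1}{2\pi i}\int_{|\xi|=1}\phi\big(\varphi_z(\xi z)\big)\frac{d\xi}{\xi^2}=\frac{-1}{2\pi}\int_0^{2\pi}\phi\big(\varphi_z(e^{it}z)\big)e^{-it}\,dt,$$
the case $z=0$ being trivial since then the left side is $0$. (Alternatively, apply (\ref{eq10}) to each component $\phi_k=\langle\phi,e_k\rangle$ and sum, justifying the interchange of sum and integral by dominated convergence.) Now, for $|\xi|=1$ the point $\varphi_z(\xi z)$ lies in $B_E$, so the integrand takes its values in the relatively compact set $\overline{\mathbb D}\cdot\phi(B_E)$. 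Let $K$ denote the closed absolutely convex hull of $\phi(B_E)$ in $E$; by Mazur's theorem $K$ is compact. Since the Bochner integral above is a limit of Riemann sums, each of which is a convex combination of points of $\overline{\mathbb D}\cdot\phi(B_E)\subseteq K$, we conclude $(1-\|z\|^2)\mathcal R\phi(z)\in K$ for all $z\in B_E$, and therefore $\{(1-\|z\|^2)\mathcal R\phi(z):z\in B_E\}\subseteq K$ is relatively compact.

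Having verified $(i)$, $(ii)$ and $(iii)$, Theorem \ref{main} applies and $C_\phi$ is compact. There is no genuine obstacle here: the argument is essentially a reduction to Theorem \ref{main}, and the only points needing (routine) care are the vector-valued form of the Cauchy/M\"obius identity (\ref{eq10}) and the standard fact that the closed absolutely convex hull of a relatively compact subset of a Banach space is compact.
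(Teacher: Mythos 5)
Your proposal is correct and follows essentially the same route as the paper: reduce to Theorem \ref{main}, note that conditions (ii) and (iii) are vacuous since $\|\phi\|_\infty<1$, and obtain the relative compactness of $\{(1-\|z\|^2)\mathcal R\phi(z):z\in B_E\}$ from the vector-valued form of identity (\ref{eq10}) together with the compactness of the closed absolutely convex hull of $\phi(B_E)$. The only cosmetic difference is that the paper derives the vector-valued identity by applying Lemma \ref{lema tec} to $\mu\circ\phi$ for $\mu\in E^*$ and invoking the weak closure of the balanced convex hull, whereas you argue directly with the Bochner integral and its Riemann sums; both are standard and equivalent here.
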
\begin{proof} It is enough to check that the set $ \{(1-\|z\|^2)\mathcal R\phi(z):z\in B_E\}$
 is relatively compact.  Lemma \ref{lema tec} applied to $\mu\circ\varphi$ for all $\mu\in E^*$  yields $(1-\|z\|^2)\mathcal R\phi(z)=\frac{-1}{2\pi i }\int_{|\xi|=1} \varphi(\varphi_x(\xi x))\frac{d\xi}{\xi^2}\Big.$ Hence $(1-\|z\|^2)\mathcal R\phi(z)$ belongs to the weak-closure of the balanced convex hull of the compact set  $\overline{\{\frac{1}{\xi^22\pi}\phi\big(B_E\big):|\xi|=1\}}\subset E$ that is also a compact set.\end{proof}
\begin{ex} \label{com1} Let $\{e_n\}$ be a sequence in the given basis $\{e_k\}.$ If $\{\phi_n\}$ is a sequence in $H^\infty(B_E)$  such that $\sum_{n=1}^\infty\|\phi_n\|^2_\infty<1,$
then the mapping $\phi(z):=\sum_n \phi_n(z) e_n$ yields a compact composition
 operator $C_\phi$  on $\mathcal B(B_E).$

 In particular for $\phi_n(z)=\prod_{j=n}^{2n}\langle z,e_j\rangle,$  $C_\phi$ is compact on $\mathcal B(B_E).$
\end{ex}
\begin{proof} Note that $\sup_{\|z\|<1}\|\phi(z)\|^2\le (\sum_{n=1}^\infty\sup_{\|z\|<1}|\phi_n(z)|^2)<1.$ Moreover, $\phi(B_E)$ is relatively compact since it
lies inside the Hilbert cube  $\mathcal{H}$ given by the sequence $(\|\phi_n\|_\infty).$
Now, apply Corollary \ref{t0}.

To verify the particular case we use the inequality between geometric and arithmetic means, namely $$|\phi_n(z)|=\prod_{j=n}^{2n}|\langle z,e_j\rangle|\le \Big(\frac{1}{n+1}\sum_{j=n}^{2n}|\langle z,e_j\rangle|\Big)^{n+1}\le (n+1)^{-\frac{n+1}{2}}\|z\|,$$
which produces the estimate  $\sum_{n=1}^\infty\|\phi_n\|_\infty^2 \le \sum_{n=1}^\infty (n+1)^{-(n+1)}<1.$
\end{proof}

\medskip

Next, we introduce a class of symbols $\varphi$ that allows a characterization of the  compactness of $C_\varphi.$ We say that the analytic mapping $\phi:B_E\to B_E$ belongs to   $\mathcal{B}_0(B_E,B_E)$
if \begin{equation}\label{bloch0} \lim_{\|z\|\to 1} (1-\|z\|^2)\|\mathcal{R}\phi(z)\|=0.\end{equation}
 In particular any map with bounded radial derivative satisfies (\ref{bloch0}).
It is easy to produce examples of maps in $\mathcal B_0(B_E,B_E):$
\begin{prop} Let $\{e_n\}$ be a sequence in the given basis $\{e_k\}.$ If $\{\phi_n\}_n\subset\mathcal B(B_E)$ is  such that $$\lim_{\|z\|\to 1} (1-\|z\|^2)|\mathcal{R}\phi_n(z)|=0 ~~\hbox{for all }n\in\mathbb{N} ~~\hbox{ and }~~
 \sum_{n=1}^\infty \|\phi_n\|^2_{\mathcal B(B_E)}<\infty,$$ then $\phi(z)=\sum_{n=1}^\infty \phi_n(z)e_n\in  \mathcal B_0(B_E,B_E)$.
\end{prop}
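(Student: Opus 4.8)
The plan is a routine tail-splitting argument carried out componentwise on $\mathcal R\phi$. First I would check that $\phi$ is legitimately defined: writing $\|\cdot\|_{\mathcal B(B_E)}$ for the full Bloch norm, the assumption $\sum_n\|\phi_n\|_{\mathcal B(B_E)}^2<\infty$ forces both $\sum_n|\phi_n(0)|^2<\infty$ and $\sum_n\|\phi_n\|_{\mathcal B}^2<\infty$ (by (\ref{eq21})), so by the growth estimate (\ref{ineq}) the series $\sum_n\phi_n(z)e_n$ converges in $E$ uniformly on each ball $\|z\|\le r<1$; hence $\phi$ is an analytic $E$-valued map on $B_E$ (and maps into $B_E$ in the situations one has in mind, compare Example \ref{com1}). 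Termwise differentiation of this locally uniformly convergent series of scalar analytic functions --- justified by the Cauchy estimates, see \cite{C} --- gives $\phi'(z)(y)=\sum_n\phi_n'(z)(y)e_n$, and in particular $\mathcal R\phi(z)=\sum_n\mathcal R\phi_n(z)e_n$. Since $(e_n)$ is orthonormal this yields the key identity $\|\mathcal R\phi(z)\|^2=\sum_n|\mathcal R\phi_n(z)|^2$, and it is this quantity, multiplied by $(1-\|z\|^2)^2$, that must be shown to tend to $0$ as $\|z\|\to 1$.

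For the estimate I would use the elementary uniform bound $(1-\|z\|^2)|\mathcal R\phi_n(z)|\le\|z\|\,(1-\|z\|^2)\|\phi_n'(z)\|\le\|\phi_n\|_{\mathcal B}\le\|\phi_n\|_{\mathcal B(B_E)}$, valid for all $n$ and all $z\in B_E$. Fix $\eps>0$. Using $\sum_n\|\phi_n\|_{\mathcal B(B_E)}^2<\infty$ choose $N$ with $\sum_{n>N}\|\phi_n\|_{\mathcal B(B_E)}^2<\eps/2$; then for every $z\in B_E$,
\[
(1-\|z\|^2)^2\sum_{n>N}|\mathcal R\phi_n(z)|^2\le\sum_{n>N}\|\phi_n\|_{\mathcal B(B_E)}^2<\frac{\eps}{2}.
\]
For the finitely many indices $n=1,\dots,N$, the hypothesis $\lim_{\|z\|\to 1}(1-\|z\|^2)|\mathcal R\phi_n(z)|=0$ furnishes $r_n<1$ with $(1-\|z\|^2)^2|\mathcal R\phi_n(z)|^2<\eps/(2N)$ whenever $\|z\|>r_n$; putting $r=\max_{1\le n\le N}r_n<1$ gives $(1-\|z\|^2)^2\sum_{n=1}^N|\mathcal R\phi_n(z)|^2<\eps/2$ for $\|z\|>r$. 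Adding the two bounds, $(1-\|z\|^2)^2\|\mathcal R\phi(z)\|^2<\eps$ for $\|z\|>r$, which is exactly (\ref{bloch0}).

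I do not expect a genuine obstacle here. The only two points that need a little care are the termwise differentiation of the $E$-valued series (so that $\mathcal R\phi=\sum_n(\mathcal R\phi_n)e_n$ and the Pythagorean identity for $\|\mathcal R\phi(z)\|$ are legitimate --- note the series $\sum_n|\mathcal R\phi_n(z)|^2$ is absolutely convergent for each $z$ by the uniform bound above) and the bookkeeping between the seminorms $\|\cdot\|_{\mathcal B}$, $\|\cdot\|_{rad}$ and the full norm $\|\cdot\|_{\mathcal B(B_E)}$; the chain of inequalities from Section 3, together with (\ref{eq21}), is all that is used, and it makes the constant in the uniform bound equal to $1$.
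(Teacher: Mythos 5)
Your proof is correct and is essentially the paper's own argument: both split $(1-\|z\|^2)^2\|\mathcal R\phi(z)\|^2=\sum_n(1-\|z\|^2)^2|\mathcal R\phi_n(z)|^2$ into a tail controlled uniformly by $\sum_{n>N}\|\phi_n\|^2_{\mathcal B(B_E)}$ and a finite head handled by the hypothesis $\lim_{\|z\|\to 1}(1-\|z\|^2)|\mathcal R\phi_n(z)|=0$ for each fixed $n$. The only difference is that you also spell out the routine justification of termwise differentiation and the Pythagorean identity for $\|\mathcal R\phi(z)\|$, which the paper takes for granted.
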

\begin{proof}Given $\e>0$ there exist $N\in \N$ and $0<\delta_j<1$ for $j=1, \cdots, N$ such that
$$(1-\|z\|^2)^2\|\mathcal R\phi(z)\|^2\le \sum_{n=1}^N (1-\|z\|^2)^2|\mathcal R\phi_n(z)|^2+ \e^2/2$$
and
$$(1-\|z\|^2)|\mathcal R\phi_j(z)|<\e/\sqrt{2N}, \quad \|z\|>\delta_j, \quad j=1,\cdots, N$$
Hence if $\|z\|>\max
_{1\le j\le N}\{\delta_j\},$ then $(1-\|z\|^2)\|\mathcal R\phi(z)\|<\e$.
\end{proof}
\begin{prop} \label{b0} Let $\varphi\in \mathcal{B}_0(B_E,B_E)$ with $\varphi(0)=0.$ Then $$(i)~~~\limsup_{\|z\|\to 1}\frac{(1- \|z\|^2) \|\mathcal R \phi(z)\|}{\sqrt{1- \|\phi(z)\|^2}}=\limsup_{\|\phi(z)\|\to 1}\frac{(1- \|z\|^2) \|\mathcal R \phi(z)\|}{\sqrt{1- \|\phi(z)\|^2}},  $$

 $$(ii)~~~\limsup_{\|z\|\to 1}\frac{(1- \|z\|^2) |\langle  \phi(z), {\mathcal R\phi(z)}\rangle|}{1- \|\phi(z)\|^2}= \limsup_{\|\phi(z)\|\to 1}\frac{(1- \|z\|^2) |\langle  \phi(z), {\mathcal R\phi(z)}\rangle|}{1- \|\phi(z)\|^2}.$$
\end{prop}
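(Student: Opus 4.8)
The plan is to prove each of $(i)$ and $(ii)$ by splitting it into the two inequalities between the corresponding $\limsup$'s. The inequality ``$\ge$'' will be essentially free, using only that $\varphi(0)=0$; the reverse inequality will follow from a dichotomy argument, which is where the hypothesis $\varphi\in\mathcal B_0(B_E,B_E)$ enters.

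Set $g_1(z)=\frac{(1-\|z\|^2)\|\mathcal R\phi(z)\|}{\sqrt{1-\|\phi(z)\|^2}}$ and $g_2(z)=\frac{(1-\|z\|^2)|\langle\phi(z),\mathcal R\phi(z)\rangle|}{1-\|\phi(z)\|^2}$. First I would observe that $(\ref{sch3})$ and $(\ref{sch2})$ in Lemma \ref{lema1} give $g_1(z)\le 2$ and $g_2(z)\le\|z\|\,\|\phi(z)\|\le 1$ for all $z\in B_E$, so the four $\limsup$'s in the statement are finite real numbers, each realized as $\lim_k g_j(w_k)$ along a suitable sequence with $\|w_k\|\to1$ (resp.\ $\|\phi(w_k)\|\to1$). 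Since $\varphi(0)=0$, estimate $(\ref{sch1})$ yields $\|\phi(z)\|\le\|z\|$, hence $\{z:\|\phi(z)\|>r\}\subseteq\{z:\|z\|>r\}$ for every $r<1$; taking the supremum of $g_j$ over each of these sets and letting $r\to1$ gives $\limsup_{\|\phi(z)\|\to1}g_j(z)\le\limsup_{\|z\|\to1}g_j(z)$, which is the ``$\ge$'' direction of $(i)$ and $(ii)$. If it happens that $\sup_z\|\phi(z)\|<1$, then $1-\|\phi(z)\|^2$ is bounded below, so the defining condition $(\ref{bloch0})$ of $\mathcal B_0(B_E,B_E)$ forces $\limsup_{\|z\|\to1}g_j=0$ and both sides of $(i)$, $(ii)$ vanish; so from now on we may assume $\sup_z\|\phi(z)\|=1$.

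For the reverse inequality, fix $j\in\{1,2\}$, put $L=\limsup_{\|z\|\to1}g_j(z)$, and assume $L>0$ (otherwise there is nothing to prove). Choose a sequence $(z_k)$ in $B_E$ with $\|z_k\|\to1$ and $g_j(z_k)\to L$, and pass to a subsequence so that $\|\phi(z_k)\|\to c$ for some $c\in[0,1]$. If $c<1$, fix $\delta$ with $c<\delta<1$; then $\|\phi(z_k)\|<\delta$ for large $k$, so $1-\|\phi(z_k)\|^2>1-\delta^2$, and using $|\langle\phi(z_k),\mathcal R\phi(z_k)\rangle|\le\|\mathcal R\phi(z_k)\|$ we obtain $g_j(z_k)\le C_\delta\,(1-\|z_k\|^2)\|\mathcal R\phi(z_k)\|$ for a constant $C_\delta$ depending only on $\delta$; by $(\ref{bloch0})$ the right-hand side tends to $0$, contradicting $g_j(z_k)\to L>0$. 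Hence $c=1$, i.e.\ $\|\phi(z_k)\|\to1$, and consequently $L=\lim_k g_j(z_k)\le\limsup_{\|\phi(z)\|\to1}g_j(z)$, which is the ``$\le$'' direction. This finishes the argument.

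I do not expect a real obstacle: the proof is short and elementary once the framework is set up. The two points that need a little care are the use of Lemma \ref{lema1} to guarantee that the quotients $g_1,g_2$ are bounded — so that the $\limsup$'s are genuine finite numbers realizable along sequences — and the bookkeeping for the degenerate symbol range $\sup_z\|\phi(z)\|<1$, where the right-hand $\limsup$ is taken over an empty index set.
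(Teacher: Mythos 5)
Your proposal is correct and follows essentially the same route as the paper: the inequality $\limsup_{\|\phi(z)\|\to1}\le\limsup_{\|z\|\to1}$ from $\|\phi(z)\|\le\|z\|$ (Lemma \ref{lema1}, since $\phi(0)=0$), and then the dichotomy on the limit of $\|\phi(z_k)\|$ along a sequence realizing the left-hand $\limsup$, with the $\mathcal B_0$ condition killing the case $\lim_k\|\phi(z_k)\|<1$. Your explicit boundedness check for $g_1,g_2$ and the contradiction formulation of the dichotomy are minor cosmetic differences, not a different argument.
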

\begin{proof}
In case $\|\phi\|_\infty <1,$ both right hand limits are null and both
left hand limits vanish according to the assumption.

Since $\|\phi(z)\|\le\|z\|$ by Lemma \ref{lema1}, the limits on the right hand side
are not greater than those on the left hand side. Now, in case $\|\phi\|_\infty =1,$
there is a sequence $(z_n)\subset B_E$ such that $\|z_n\|\to 1$ and $ \limsup_{\|z\|\to 1}\frac{(1- \|z\|^2) \|\mathcal R \phi(z)\|}{\sqrt{1- \|\phi(z)\|^2}}= \lim_n\frac{(1- \|z_n\|^2) \|\mathcal R \phi(z_n)\|}{\sqrt{1- \|\phi(z_n)\|^2}} .$ From the bounded sequence $(\|\phi(z_n)\|)$ we get a convergent subsequence that we denote the same. If  $\lim_n \|\phi(z_n)\|=1,$ we have $ \limsup_{\|\phi(z)\|\to 1}\frac{(1- \|z\|^2) \|\mathcal R \phi(z)\|}{\sqrt{1- \|\phi(z)\|^2}}\ge \lim_n\frac{(1- \|z_n\|^2) \|\mathcal R \phi(z_n)\|}{\sqrt{1- \|\phi(z_n)\|^2}} $ that leads to the equality $(i),$ while if $\lim_n \|\phi(z_n)\|<1,$ then $ \limsup_{\|z\|\to 1}\frac{(1- \|z\|^2) \|\mathcal R \phi(z)\|}{\sqrt{1- \|\phi(z)\|^2}}=0,$ so $(i)$ holds as well. The analogous argument for $(ii).$ \end{proof}
\smallskip

In the following result we replace condition (i) in Theorem \ref{main} by the weaker one given by (\ref{c0}) and  conditions (\ref{c1}) and (\ref{c2}) by the stronger ones given by taking $\lim_{\|\phi(z)\| \to 1} $  instead of $\lim_{\|z\| \to 1} .$ Since the proof  follows the same arguments as  in Theorem \ref{main}, it will only be  sketched.
\begin{prop} Let   $\phi:B_E\to B_E$ be analytic with $\phi(0)=0$. If  $\phi$ satisfies (\ref{c0}), \begin{equation}\label{c1'}
 \quad \lim_{\|z\|\to 1}\frac{(1- \|z\|^2) \|\mathcal R \phi(z)\|}{\sqrt{1- \|\phi(z)\|^2}}=0, \text{ and }
\end{equation}

\begin{equation} \label{c11}
\quad \lim_{\|z\|\to 1}\frac{(1- \|z\|^2) |\langle  \phi(z), {\mathcal R\phi(z)}\rangle|}{1- \|\phi(z)\|^2}=0,
\end{equation} then $C_\phi$ is compact on $\B.$
\end{prop}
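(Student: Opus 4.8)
The plan is to apply Lemma~\ref{lema3}: taking a bounded net $(f_\alpha)$ in $\B$ with $f_\alpha\to 0$ in $(\B,co)$ and $M:=\sup_\alpha\|f_\alpha\|_{inv}<\infty$, and using the equivalence of the Bloch semi-norms, it suffices to show that $\sup_{z\in B_E}(1-\|z\|^2)|\mathcal R(f_\alpha\circ\phi)(z)|\to 0$, where $\mathcal R(f_\alpha\circ\phi)(z)=\langle\nabla f_\alpha(\phi(z)),\overline{\mathcal R\phi(z)}\rangle$. Fix $\e>0$. I would follow the proof of Theorem~\ref{main}, but, since $\phi(0)=0$ gives $\|\phi(z)\|\le\|z\|$ by Lemma~\ref{lema1}, split $B_E$ as $\{\|z\|>r\}\cup\{\|z\|\le r\}$ for a suitable $r<1$ instead of splitting by the size of $\|\phi(z)\|$. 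On $\{\|z\|>r\}$, conditions (\ref{c1'}) and (\ref{c11}) let one choose $r<1$ so that
$$
(1-\|z\|^2)\,\frac{\sqrt{(1-\|\phi(z)\|^2)\|\mathcal R\phi(z)\|^2+|\langle\phi(z),\mathcal R\phi(z)\rangle|^2}}{1-\|\phi(z)\|^2}<\e\qquad(\|z\|>r),
$$
and then Lemma~\ref{lema2}, applied at $\phi(z)$ to the functional $w\mapsto\langle\nabla f_\alpha(\phi(z)),\overline{w}\rangle$ exactly as in inequality (\ref{ineq1}) of the proof of Theorem~\ref{main}, yields $(1-\|z\|^2)|\mathcal R(f_\alpha\circ\phi)(z)|\le M\e$ for every $\|z\|>r$, uniformly in $\alpha$.

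On $A:=\{z:\|z\|\le r\}$ (hence $\|\phi(z)\|\le r$) the only point not already present in the proof of Theorem~\ref{main} is that the relative compactness of $\{(1-\|z\|^2)\mathcal R\phi(z):z\in A\}$, which was an \emph{assumption} there, must now be \emph{deduced} from (\ref{c0}). For this I would argue as in Corollary~\ref{t0}: applying Lemma~\ref{lema tec} to $\mu\circ\phi$ for $\mu\in E^*$ gives $(1-\|z\|^2)\mathcal R\phi(z)=\frac{-1}{2\pi i}\int_{|\xi|=1}\phi(\varphi_z(\xi z))\,\frac{d\xi}{\xi^2}$, and by (\ref{eq2}) one has $\|\varphi_z(\xi z)\|^2=1-\frac{(1-\|z\|^2)^2}{|1-\bar\xi\|z\|^2|^2}\le 1-\frac{(1-r^2)^2}{(1+r^2)^2}=:\rho_r^2<1$ whenever $\|z\|\le r$ and $|\xi|=1$; thus $\phi(\varphi_z(\xi z))$ stays in the relatively compact set $\phi(\rho_r B_E)$, so $(1-\|z\|^2)\mathcal R\phi(z)$ lies in the closed balanced convex hull of a relatively compact subset of $E$, which is compact. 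Combined with $\phi(A)\subseteq\phi(rB_E)$ being relatively compact by (\ref{c0}), this shows that $K:=\{(\phi(z),(1-\|z\|^2)\mathcal R\phi(z)):z\in A\}\subset E\times E$ is relatively compact. Granting this, the remaining estimates transcribe the proof of Theorem~\ref{main} with $\delta=r$: formula (\ref{eq9}) bounds $(1-\|z\|^2)|\mathcal R(f_\alpha\circ\phi)(z)|$ by a constant $C_r$ times two integral means of $|f_\alpha|$ over the compact subsets $\{\varphi_{\phi(z)}(\xi\phi(z)):\xi\in\T\}$ and $\{\varphi_{\phi(z)}(\xi\frac{\mathcal R\phi(z)}{2\|\mathcal R\phi(z)\|}):\xi\in\T\}$ of $B_E$, giving $(1-\|z\|^2)|\mathcal R(f_\alpha\circ\phi)(z)|\to0$ for each fixed $z\in A$; Lemma~\ref{lemafinal} (using $(1-\|z\|^2)\|\mathcal R\phi(z)\|\le 1$ from (\ref{sl2})) provides a Lipschitz bound for $z\mapsto(1-\|z\|^2)\mathcal R(f_\alpha\circ\phi)(z)$ in the variables $(\phi(z),(1-\|z\|^2)\mathcal R\phi(z))$ with constant of order $C_r'M$; and covering $K$ by finitely many sets of diameter $\e/(C_r'M)$ with centers $z_1,\dots,z_N\in A$ and invoking the pointwise convergence at those centers yields $\limsup_\alpha\sup_{z\in A}(1-\|z\|^2)|\mathcal R(f_\alpha\circ\phi)(z)|\le\e$.

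Putting the two regions together, $\limsup_\alpha\|C_\phi(f_\alpha)\|_{\B}$ is at most a fixed multiple of $\e$, so letting $\e\to 0$ gives $\|C_\phi(f_\alpha)\|_{\B}\to 0$ and $C_\phi$ is compact by Lemma~\ref{lema3}. The step I expect to be the crux — and the only one going beyond splicing together the proofs of Theorem~\ref{main} and Corollary~\ref{t0} — is the one on $A$: extracting relative compactness of $\{(1-\|z\|^2)\mathcal R\phi(z):\|z\|\le r\}$ from the lone hypothesis (\ref{c0}), which forces the detour through the Cauchy-type representation of the radial derivative together with the uniform estimate $\|\varphi_z(\xi z)\|\le\rho_r<1$ on $\|z\|\le r$; once that is in hand, the hypothesis $\phi(0)=0$ is used only to guarantee $\{\|z\|\le r\}\subseteq\{\|\phi(z)\|\le r\}$, so that the rest of the proof of Theorem~\ref{main} applies verbatim on $A$.
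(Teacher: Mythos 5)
Your proof is correct and follows essentially the same route as the paper's: split $B_E$ by $\|z\|\lessgtr r$ using $\|\phi(z)\|\le\|z\|$ (this is where $\phi(0)=0$ enters), handle the outer region via Lemma \ref{lema2} and conditions (\ref{c1'})--(\ref{c11}), and on the inner region rerun the proof of Theorem \ref{main} after deducing the needed relative compactness of the radial-derivative set from (\ref{c0}) by a Cauchy-type integral representation. The only, immaterial, difference is that the paper gets that compactness from the plain Cauchy formula $\mathcal R\phi(z)=\frac{1}{2\pi i}\int_{|\lambda|=r'}\phi(z+\lambda z)\frac{d\lambda}{\lambda^2}$ with $\delta+r'<1$, whereas you use the invariant formula of Lemma \ref{lema tec} together with the estimate $\|\varphi_z(\xi z)\|\le\rho_r<1$; both place the relevant values of $\phi$ inside a relatively compact set supplied by (\ref{c0}).
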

\begin{proof} By Lemma \ref{lema1}, we have $\|\phi(z)\|\le\|z\|.$ The analogous estimate to (\ref{ineq1}) holds for $\|z\|>\delta.$

In the remaining case $\|z\|\le\delta,$ also $\|\phi(z)\|\le\delta$ so the estimates in the proof of Theorem \ref{main} hold, that is, if $\|z\|\le\delta,$ then \be\label{final1}
(1-\|z\|^2)|\langle\nabla f_\alpha(\phi(z)), \overline{\mathcal R\phi(z)}\rangle |\to 0.
\end{equation}
Now the final argument in the proof of Theorem \ref{main} relies on the relative compactness of $\phi(\{\|z\|\le\delta\}$  that holds by assumption and that of $\{\mathcal R\phi(z):\|z\|\le\delta\},$ which follows from the Cauchy formula: Indeed,
$\mathcal R\phi(z)=\varphi'(z)(z)=\frac{1}{2\pi i}\int_{|\lambda|=r}\frac{\varphi(z+\lambda z)}{\lambda^2}d\lambda,$ for $0<r<1$ such that $\delta+r<1.$ Therefore, $\mathcal R\phi(z)$ belongs to the weak-closure of the balanced convex hull of the compact set  $\overline{\{\mu\phi\big((\delta+r)B_E\big):|\mu|=r^{-2}\}}\subset E$ that is also a compact set.
\end{proof}\smallskip

Let us mention that (\ref{c1'}) implies that $\phi\in \mathcal{B}_0(B_E,B_E)$ and that combining  the necessary condition obtained in Theorem \ref{sufi1} and Proposition \ref{b0} we get the following:
\begin{cor} Let $\varphi\in \mathcal{B}_0(B_E,B_E)$ with $\phi(0)=0$. Then $C_\varphi$ is compact in $\B$ if and only if
$\phi$ satisfies (\ref{c0}), (\ref{c1'}) and (\ref{c11}).
\end{cor}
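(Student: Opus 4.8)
The plan is to obtain the statement as a direct assembly of three results already established: the preceding Proposition supplies the sufficiency, while Theorem~\ref{sufi1} together with Proposition~\ref{b0} supplies the necessity. So the argument is short.

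For the sufficiency, I would assume $\phi(0)=0$ and that $\phi$ satisfies (\ref{c0}), (\ref{c1'}) and (\ref{c11}). Then the hypotheses of the preceding Proposition are met verbatim, and it yields that $C_\phi$ is compact on $\mathcal B(B_E)$. (The assumption $\phi\in\mathcal B_0(B_E,B_E)$ is in fact redundant in this direction, since, as noted just above, (\ref{c1'}) already forces it; but it is harmless.)

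For the necessity, I would assume $C_\phi$ is compact. By Theorem~\ref{sufi1}, $\phi$ satisfies (\ref{c0}), (\ref{c1}) and (\ref{c2}). Condition (\ref{c0}) is already one of the three conditions sought, so it only remains to pass from (\ref{c1}) and (\ref{c2}), which are limits taken as $\|\phi(z)\|\to 1$, to (\ref{c1'}) and (\ref{c11}), which are limits taken as $\|z\|\to 1$. Here I would invoke Proposition~\ref{b0}: its hypotheses $\phi\in\mathcal B_0(B_E,B_E)$ and $\phi(0)=0$ are in force, and parts (i) and (ii) identify, respectively,
$$\limsup_{\|z\|\to 1}\frac{(1-\|z\|^2)\|\mathcal R\phi(z)\|}{\sqrt{1-\|\phi(z)\|^2}}\ \text{ with }\ \limsup_{\|\phi(z)\|\to 1}\frac{(1-\|z\|^2)\|\mathcal R\phi(z)\|}{\sqrt{1-\|\phi(z)\|^2}},$$
and the analogous pair for the quantity $(1-\|z\|^2)|\langle\phi(z),\mathcal R\phi(z)\rangle|/(1-\|\phi(z)\|^2)$. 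Since (\ref{c1}) and (\ref{c2}) state that the right-hand limsups vanish, so do the left-hand ones, which is precisely (\ref{c1'}) and (\ref{c11}).

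There is no genuine obstacle here: the corollary is essentially a repackaging of Theorem~\ref{sufi1}, Proposition~\ref{b0} and the preceding Proposition. The only point needing (minor) care is to verify that the hypotheses of Proposition~\ref{b0} indeed hold and to match each condition to its correct label.
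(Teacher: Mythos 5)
Your proposal is correct and follows exactly the route the paper intends: the paper itself presents this corollary as the immediate combination of the preceding Proposition (sufficiency) with Theorem~\ref{sufi1} and Proposition~\ref{b0} (necessity), which is precisely your assembly. The only detail worth being explicit about—and Proposition~\ref{b0} already covers it—is the degenerate case $\|\phi\|_\infty<1$, where the $\|\phi(z)\|\to 1$ limits are vacuous and the $\|z\|\to 1$ limits vanish directly from $\phi\in\mathcal B_0(B_E,B_E)$.
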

\section{Examples}
In this section we provide a number of examples to discuss the relations among the various conditions we have found above.
\begin{ex}  Consider $(\xi_n)\subset B_E$ such that   \begin{equation}\label{a1}\sup_{\|z\|\le 1} \sum_n |\langle z, \xi_n\rangle|^2\le 1.\end{equation}
Define $\phi_n(z)= \langle z, \xi_n\rangle$ and $\phi(z)=\sum_n \phi_n(z)e_n,$ where $\{e_n\}$ is an orthonormal sequence in $E.$

Then $\phi$ satisfies (\ref{c1'}). In particular $\phi \in \mathcal B_0(B_E, B_E)$.

Moreover if $(\xi_n)$ is an orthogonal system we have that

 (i) $\phi$ satisfies (\ref{c2}) whenever $\sup_{n}\|\xi_n\|<1,$

 (ii) $\phi$ fails (\ref{c2}) whenever there exists $n_0$ with $\|\xi_{n_0}\|=1,$

(iii)   $\phi(B_E)$ is relatively compact whenever $\sum_{n}\|\xi_n\|^2<\infty,$ and

(iv)  $\phi$ fails (\ref{c0}) whenever $\limsup_{n\to \infty}\|\xi_n\|>0$.
\end{ex}
\begin{proof} Assumption (\ref{a1}) guarantees that $\phi$ is analytic and maps $B_E$ to $B_E$. Since $\phi(0)=0$, by Lemma \ref{lema1}, we have $\|\phi(z)\|\le \|z\|$ for any $z \in B_E$. Notice that $\mathcal R\phi(z)=\sum_n \mathcal{R}\varphi_n(z)e_n=\phi(z)$ and using that $\alpha \mapsto \frac{\alpha}{\sqrt{1-\alpha^2}}$ is increasing for $0<\alpha<1,$   we have
$$\frac{(1- \|z\|^2) \|\mathcal R \phi(z)\|}{\sqrt{1- \|\phi(z)\|^2}}=\frac{(1- \|z\|^2) \| \phi(z)\|}{\sqrt{1- \|\phi(z)\|^2}}\le \sqrt{1- \|z\|^2}.$$ In particular $\phi$ satisfies (\ref{c1}).

Since $\langle  \phi(z), {\mathcal R\phi(z)}\rangle =\|\phi(z)\|^2, $ we get that $\phi$ satisfies (\ref{c2}) if and only if $\lim_{\|\phi(z)\|\to 1} \frac{1- \|z\|^2}{1- \|\phi(z)\|^2}=0$.

Assume now that $(\xi_n)$ is an orthogonal system. Hence
$$\|\phi(z)\|^2= \sum_n |\langle z,\xi_n\rangle|^2\le \sup_n\|\xi_n\|^2 \sum_n |\langle z,\frac{\xi_n}{\|\xi_n\|}\rangle|^2\le \sup_n \|\xi_n\|^2 \|z\|^2.$$
Assuming $\sup_n \|\xi_n\|^2 <1$ we have $\phi(B_E)\subset \delta B_E$ for some $\delta<1$ and (\ref{c2}) trivially holds which shows (i).

Assume now that $\|\xi_{n_0}\|=1$.
Selecting $z=\lambda \xi_{n_0}$ we have that $\phi(z)= \lambda  e_{n_0}$ and
$$\lim_{\|\phi(z)\| \to 1} \frac{1- \|z\|^2 }{1- \|\phi(z)\|^2}\|\phi(z)\|^2=1.$$
This gives (ii).

Now (iii) follows using that $|\phi_n(z)| \le \|\xi_n\|$ for each $n$. Hence $\phi(B_E)$  is contained in the Hilbert cube given by the sequence $(\|\xi_n\|).$

Finally, to show (iv),  assume $\limsup_{n\to \infty}\|\xi_n\|>0$. Hence there exist $\e>0$ and indices $m_n$ such that $\|\xi_{m_n}\|\ge \e$. For each $0<\delta<1$ we have $\phi(\delta \frac{\xi_n}{\|\xi_n\|})=\delta \|\xi_n\|e_n$. Hence $\{ \delta \|\xi_{m_n}\|e_{m_n}:n\in \N\}\subset \phi(\delta B_E)$ which gives that $\phi(\delta B_E)$ is not relatively compact in $B_E$.
\end{proof}
\smallskip

In  \cite{GMS}
it was shown that $\phi(z)=\sum_{n=1}^\infty z_n^n e_n$
satisfies (\ref{c0}).  Such $\varphi$  is a particular choice in the following example.
\begin{ex} Let $\{e_k\}$ be an orthonormal sequence in $E.$
Let  $F_k:\D\to \D$ be a sequence of analytic functions such that $F_k(0)=0.$  Define
$$\phi(z):=\sum_{k=1}^\infty F_k(\langle z,e_k\rangle)e_k.$$

(i) If $\|F_k\|_\infty<1$ for all $k\in \N,$ then $\phi$ satisfies (\ref{c4}).

(ii) If $F_k\in \mathcal B_0,$ the little Bloch space, and $\|F_k\|_\infty<1$ for all $k\in \N,$ then $\phi$ satisfies (\ref{c3'}).

(iii) If  there exists $n_0\in \N$ such that $C_{F_{n_0}}$ is non-compact on $\mathcal B$, then $\phi$ fails  (\ref{c2}).

(iv) If $\sup_k\|F_k\|_\infty<1,$ then $\phi$ satisfies $\phi(B_E)\subset \delta B_E$ for some $0<\delta<1$. In particular $\varphi$ satisfies (\ref{c1}) and (\ref{c2}).

(v) If $\sum_k\|F_k\|^2_\infty<\infty,$ then  $\phi(B_E)$ is relatively compact in $B_E$.

(vi) If $\sum_k\|F_k\|^2_{\mathcal B}<\infty,$ then  $\phi$ satisfies (\ref{c0}).

\end{ex}

\begin{proof}
Notice that since $|F_k(\lambda)|\le |\lambda|$ we have that $\phi$ maps $B_E$ into $B_E$. Actually one has
$$\|\phi(z)\|^2=\sum_{k=1}^\infty |F_k(\langle z,e_k\rangle)|^2\le \sum_{k=1}^\infty \|F_k\|^2_\infty|\langle z,e_k\rangle|^2\le \|z\|^2~ ~~\text{ and further,}$$

\begin{equation}\label{e1}\|\phi(z)\|\le (\sup_k \|F_k\|_\infty) \|z\|.\end{equation}
 Since
$
\phi'(z)(u)=\sum_{k=1}^\infty F'_k(\langle z,e_k\rangle)\langle u,e_k\rangle e_k,$ then
$$\mathcal R\phi(z)=\sum_{k=1}^\infty F'_k(\langle z,e_k\rangle)\langle z,e_k\rangle e_k, \text { so }
$$
$$
\langle\phi(z),\mathcal R\phi(z)\rangle=\sum_{k=1}^\infty F_k(\langle z,e_k\rangle)\overline{F'_k(\langle z,e_k\rangle)\langle z,e_k\rangle}.
$$

Statement (i) follows since $\phi_{k,l}(\lambda)= F_k(\lambda)\delta_{k,l}$ and $\|F_k\|_\infty<1$ implies compactness of $C_{F_k}$.
  $$\text{ To verify (ii) notice that }\;\;\frac{|\mathcal R\phi_n(z)|}{1-|\phi_n(z)|^2}=\frac{|F'_n(\langle z,e_n\rangle)\|\langle z,e_n\rangle|}{1- |F_n(\langle z,e_n\rangle)|^2}\;\;\;\text{
from where we conclude } $$
$$\frac{(1-\|z\|^2)|\mathcal R\phi_{n}(z)|}{1-|\phi_n(z)|^2}\le \frac{(1-|\langle z,e_n\rangle|^2)|F'_n(\langle z,e_n\rangle)|}{1-\|F_n\|_\infty^2}$$
that shows (\ref{c3'}).

Concerning (iii): since $C_{F_{n_0}}$ is non-compact then by Theorem 2 in \cite{MM} there exists $(\lambda_n) \subset \D$ for which $|F_{n_0}(\lambda_n)|\to 1$ (in particular $|\lambda_n|\to 1$) and
$$\lim_n \frac{(1- |\lambda_n|^2)|F'_{n_0}(\lambda_n)|}{1- |F_{n_0}(\lambda_n)|^2}\neq 0.$$
$$\hbox{Selecting the sequence }\xi_n=\lambda_ne_{n_0}, \text{ we  have } \|\phi(\xi_n)\|^2=|F_{n_0}(\lambda_n)|^2,  ~~~~\|\mathcal R\phi(\xi_n)\|=|F'_{n_0}(\lambda_n)\|\lambda_n|$$  and
 $\langle \mathcal R\phi(\xi_n),\phi(\xi_n)\rangle = \overline{F_{n_0}(\lambda_n)}F'_{n_0}(\lambda_n)\lambda_n.$ Therefore  $\phi$ fails   (\ref{c2}).
\smallskip

 To check (iv), choose $\delta=\sup_k \|F_k\|_\infty$ and  use (\ref{e1}).

 Since $\phi(B_E)$  is contained in the Hilbert cube given by the sequence $(\|F_k\|_\infty)$ it is relatively compact. Thus  (v) holds.

 Finally to show (vi) we use the estimate for analytic functions $F:\D\to \D$ with $F(0)=0$ given by
 $|F(\lambda)|\le \|F\|_{\mathcal B} \beta(0,\lambda)$ to obtain that $\phi(\delta B_E)$ is  contained in the Hilbert cube given by the sequence $(\|F_k\|_{\mathcal B} \beta(0,\delta)).$ This gives (\ref{c0}).
\end{proof}
\begin{ex} \label{nc2} Let $\{e_k\}$ be an orthonormal sequence in $E.$ Let us consider $\phi(z)=\sum_k \phi_k(z)e_k$ where
\begin{equation}\phi_k(z)=\langle z,e_k\rangle^k.\end{equation}
Then $\phi$ satisfies (\ref{c0}) and fails (\ref{c3}). In particular $C_\phi$ is non-compact on $\mathcal B(B_E)$.
\end{ex}
\begin{proof} Notice that $\phi(z)\in B_E$ for each $z\in B_E$ because
$$\sum_{k=1}^\infty |\phi_k(z)|^2 \le \sum_{k=1}^\infty  |\langle z,e_k\rangle|^{2}\le \|z\|^2.$$
It is clear that $\mathcal R\phi_k(z)=k\phi_k(z)$.

To show (\ref{c0}) just observe that $\sup_{\|z\|\le \delta} |\phi_k(z)|\le \delta^k.$ Denote
$$A_k=\sup_{z\in B_E} \frac{(1-\|z\|^2)|\mathcal R\phi_k(z)|}{1-|\phi_k(z)|^2}.$$
\ba \text{Let}~~z=\lambda e_k \text{ and estimate }
A_k\ge \sup_{0<\lambda<1}
\frac{(1-\lambda^2)k\lambda^k}{1-\lambda^{2k}}\ge \sup_{k}(1-\frac{1}{k})^{\frac{k}{2}}>0.
\ea
\end{proof}
\begin{ex} \label{nc1} Let $\{e_k\}$ be an orthonormal sequence in $E.$ Let $(n_k)_{k\in \N}$ be an increasing sequence of natural numbers with $n_0=0$ and define $\phi(z)=\sum_k \phi_k(z)e_k$
and $\psi(z)=\sum_k \psi_k(z)e_k$ where
\begin{equation}\phi_k(z)=\sum_{j=n_{k-1}+1}^{n_{k}} z_j^{2k}
\hbox{ and } \psi_k(z)=\left( \sum_{j=n_{k-1}+1}^{n_{k}} z_j^{2} \right)^k.\end{equation}
Then  $\phi$ and $\psi$ satisfy (\ref{c0}) but fail (\ref{c4}). Hence $C_\phi$ and $C_\psi$ are non-compact on $\mathcal B(B_E)$.
\end{ex}
\begin{proof}
Notice that $\phi(z), \psi(z) \in B_E$ for each $z\in B_E$ because
$$\max\{ |\phi_k(z)|^2, |\psi_k(z)|^2\}\le  (\sum_{j=n_{k-1}+1}^{n_{k}} |z_j|^{2}\Big)^{k}\le \sum_{j=n_{k-1}+1}^{n_{k}} |z_j|^{2}.$$

Condition (\ref{c0}) follows from the estimate~~~~ $\max\{ |\phi_k(z)|^2, |\psi_k(z)|^2\}\le \|z\|^{2k}.$

It is immediate to see that $\mathcal R\phi_k(z)=2k\phi_k(z)$  and $\mathcal R\psi_k(z)=2k\psi_k(z)$ and, for each $k,m\in \N$ we have
$$\psi_{k,m}(\lambda)=\phi_{k,m}(\lambda)=  \lambda^{2k}, \quad n_k\le m\le n_{k+1}$$
and $\psi_{k,m}=\phi_{k,m}=0$ otherwise.

We see that $C_{\phi_{k,m}}$ is non-compact on $\mathcal B$ because
$$\lim_{|\lambda^{2k}|\to 1}\frac{(1- |\lambda|^2) 2k |\lambda|^{2k-1}}{1- |\lambda|^{4k}}\ne 0$$ due to the estimate
$1- |\lambda|^{4k}\le 2k(1- |\lambda|^{2}), \quad |\lambda|<1.$
\end{proof}
\small

%
%

\end{document}